\renewcommand{\a}{\alpha}
\renewcommand{\b}{\beta}
\newcommand{\normeq}{\trianglelefteqslant}
\newcommand{\la}{\langle}
\newcommand{\ra}{\rangle}
\renewcommand{\to}{\rightarrow}
\newcommand{\leqs}{\leqslant}
\newcommand{\geqs}{\geqslant}
\newcommand{\Aut}{\mathrm{Aut}}
\newcommand{\Out}{\mathrm{Out}}
\newcommand{\soc}{\mathrm{soc}}
\newcommand{\PSL}{\mathrm{PSL}}
\newcommand{\GL}{\mathrm{GL}}
\newcommand{\PGL}{\mathrm{PGL}}
\newcommand{\SL}{\mathrm{SL}}
\newcommand{\PGammaL}{\mathrm{P\Gamma L}}
\newcommand{\PSp}{\mathrm{PSp}}
\newcommand{\Sp}{\mathrm{Sp}}
\newcommand{\PSU}{\mathrm{PSU}}
\newcommand{\SU}{\mathrm{SU}}
\newcommand{\PGU}{\mathrm{PGU}}
\newcommand{\SigmaL}{\mathrm{\Sigma L}}
\newcommand{\GU}{\mathrm{GU}}
\def\Ma{{\rm M}}
\def\Sym{\mathrm{Sym}}
\def\calB{\mathcal{B}}
\def\bbZ{\mathbb{Z}}
\def\GammaL{\mathrm{\Gamma L}}
\newcommand{\imod}[1]{\allowbreak\mkern4mu({\operator@font mod}\,\,#1)}
\newtheorem{theorem}{Theorem} 
\newtheorem*{conj*}{Conjecture}
\newtheorem{corol}[theorem]{Corollary}
\newtheorem{prob}[theorem]{Problem}
\newtheorem{thm}{Theorem}[section] 
\newtheorem{prop}[thm]{Proposition} 
\newtheorem{lem}[thm]{Lemma}
\newtheorem{cor}[thm]{Corollary}
\newtheorem{hypo}[thm]{Hypothesis}
\newtheorem*{prob*}{Problem}
\theoremstyle{definition}
\newtheorem{rem}[thm]{Remark}
\newtheorem{ex}[thm]{Example}
\begin{document}
	
	\title[On finite permutation groups of rank three]{On finite permutation groups of rank three} 
	\author{Hong Yi Huang}
	\address{H.Y. Huang, School of Mathematics, University of Bristol, Bristol BS8 1UG, UK}
	\email{hy.huang@bristol.ac.uk}
	\author{Cai Heng Li}
	\address{C.H. Li, SUSTech International Center for Mathematics, and Department of Mathematics, Southern University of Science and Technology, Shenzhen 518055, Guangdong, P. R. China}
	\email{lich@sustech.edu.cn}
	\author{Yan Zhou Zhu}
	\address{Y.Z. Zhu, Department of Mathematics, Southern University of Science and Technology, Shenzhen 518055, Guangdong, P. R. China}
	\email{zhuyz@mail.sustech.edu.cn}
	\date{\today}
	
	\maketitle
	
	\begin{abstract}
		The classification of the finite primitive permutation groups of rank $3$ was completed in the 1980s and this landmark achievement has found a wide range of applications. In the general transitive setting, a classical result of Higman shows that every finite imprimitive rank $3$ permutation group $G$ has a unique non-trivial block system $\calB$ and this provides a natural way to partition the analysis of these groups. Indeed, the induced permutation group $G^{\calB}$ is $2$-transitive and one can also show that the action induced on each block in $\calB$ is also $2$-transitive (and so both induced groups are either affine or almost simple). In this paper, we make progress towards a classification of the rank $3$ imprimitive groups by studying the case where the induced action of $G$ on a block in $\calB$ is of affine type. Our main theorem divides these rank $3$ groups into four classes, which are defined in terms of the kernel of the action of $G$ on $\calB$. In particular, we completely determine the rank $3$ semiprimitive groups for which $G^\calB$ is almost simple, extending recent work of Baykalov, Devillers and Praeger. We also prove that if $G$ is rank $3$ semiprimitive and $G^\calB$ is affine, then $G$ has a regular normal subgroup which is a special $p$-group for some prime $p$.
	\end{abstract}
	
	\section{Introduction}
	
	\label{s:intro}

	Let $G\leqs\Sym(\Omega)$ be a permutation group on a finite set $\Omega$ and let $H$ be a point stabiliser. Recall that the \textit{rank} of $G$ is the number of orbits in the componentwise action of $G$ on $\Omega\times\Omega$. Equivalently, if $G$ is transitive with a point stabiliser $H$, then the rank of $G$ is the number of $H$-orbits on $\Omega$. The study of rank $3$ groups dates back to work of Higman \cite{H_rank3} in the 1960s and they continue to find a wide range of connections and applications to other areas of algebra and combinatorics. This includes transitive graph decompositions \cite{BPP_trans-decomp,PP_trans-decomp}, partial linear spaces \cite{BDFP_partial,BDP_innately,D_partial1,D_partial2} and combinatorial designs \cite{BMF_des,D_sym_des1,D_sym_des2,D_sym_des3,M_des}. 
	
	Clearly, every permutation group of rank $3$ is transitive, and recall that a transitive permutation group is \textit{primitive} if a point stabiliser is a maximal subgroup, and \textit{imprimitive} otherwise. The classification of the rank $3$ primitive groups was completed by Liebeck and Saxl in the 1980s \cite{L_rank3,LS_rank3}, building on earlier work by Bannai, Kantor and Liebler \cite{B_rank3,KL_rank3}. It is natural to seek extensions of this classification to interesting families of imprimitive groups. The first step in this direction was taken by Devillers et al. in \cite{DGLPP_rank3}, where the finite quasiprimitive groups of rank $3$ are determined (recall that a transitive group is called \textit{quasiprimitive} if every non-trivial normal subgroup is transitive). Recently, this was further extended to the family of innately transitive groups \cite{BDP_innately}, where we recall that a permutation group is  \textit{innately transitive} if it has a transitive minimal normal subgroup (see  Theorem \ref{t:innately} for further details).
	
	As introduced in \cite{BM_semi}, a transitive permutation group is said to be \textit{semiprimitive} if each normal subgroup is either transitive or semiregular. In particular, note that every innately transitive group is semiprimitive. The class of semiprimitive groups includes some interesting families of permutation groups. For example, every finite Frobenius group is semiprimitive, and the automorphism group of every graph that is vertex-transitive and locally quasiprimitive is also semiprimitive (see \cite[Lemma 2.1]{BM_semi} and \cite[Lemma 8.1]{GM_semi}, respectively). The study of semiprimitive groups is partly motivated by its applications to collapsing transformation monoids \cite{BM_semi} and the graph-restrictive problem for permutation groups \cite{GM_semi}, and we refer the reader to \cite{GM_semi} for a description of the structure of semiprimitive groups.
	
	Our first theorem, which will be proved in Section \ref{s:semi_proof}, is a key step towards a classification of semiprimitive rank $3$ groups, extending earlier results \cite{BDP_innately,DGLPP_rank3}. A significant motivation here is Proposition \ref{p:red}, which shows that the class of semiprimitive rank $3$ groups arises naturally in the study of general rank $3$ transitive groups, and this will be an important ingredient in the proof of our main result (see Theorem \ref{thm:reduction} below). One of the main difficulties in proving Theorem \ref{thm:semi} is in dealing with non-split extensions of almost simple groups by groups of prime order, and the groups with a regular normal subgroup, noting that neither case arises in the analysis of rank $3$ innately transitive groups. This requires us to adopt different techniques from those required in the innately transitive setting. For example, we refer the reader to Lemma \ref{l:semi} for basic observations of semiprimitive rank $3$ groups which are not innately transitive.
	
	In order to state the result, recall that a $p$-group $P$ is \textit{special} if either $P$ is elementary abelian, or $P' = Z(P) = \Phi(P)$ is elementary abelian, where $\Phi(P)$ is the Frattini subgroup of $P$. We also adopt the standard notation of classical groups, so a general semilinear group is denoted $\GammaL_d(q)$ for some $d$ and $q$, and we write $\PGammaL_d(q):= \GammaL_d(q)/Z(\GL_d(q))$. Recall that for a prime $p$ and an integer $f$, a prime divisor $r$ of $p^f-1$ is called a \textit{primitive prime divisor} if $r$ does not divide $p^m-1$ for any positive integer $m < f$.
	
	\begin{theorem}
		\label{thm:semi}
		Let $G\leqs\Sym(\Omega)$ be a finite semiprimitive permutation group of rank $3$ with point stabiliser $H$ and assume that $G$ is not innately transitive. Then up to permutation isomorphism, one of the following holds:
		\begin{itemize}\addtolength{\itemsep}{0.2\baselineskip}
			\item[{\rm (a)}] $(G,H) = (3.S_6,S_5)$ or $(2.\mathrm{M}_{12},\mathrm{M}_{11})$.
			\item[{\rm (b)}] $\Omega$ is the set of $C$-orbits on $\mathbb{F}_q^d\setminus\{0\}$ for $\bbZ_{(q-1)/r}\cong C < Z(\GL_d(q))$, and we have
			\[
			r.\PSL_d(q)\cong \SL_d(q)C/ C\leqs G\leqs \GammaL_d(q)/C\cong r.\PGammaL_d(q),
			\]
			where $(d,q)\ne (2,3)$, $q = p^{f}$, $p$ is a prime and $r$ is a primitive prime divisor of $p^{r-1}-1$.
			\item[{\rm (c)}] $G$ has a regular normal subgroup $N$, where $N$ is a special $p$-group for some prime $p$.
		\end{itemize}
	\end{theorem}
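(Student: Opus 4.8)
The plan is to exploit the fundamental structure of rank $3$ imprimitive groups described in the abstract: by Higman's theorem, $G$ has a unique non-trivial block system $\calB$, and both the induced group $G^\calB$ on $\calB$ and the induced action on each block are $2$-transitive. Since $G$ is semiprimitive but not innately transitive, I first want to analyse the kernel $K$ of the action of $G$ on $\calB$. Semiprimitivity forces $K$ to be either transitive or semiregular; since $K$ acts trivially on $\calB$ it cannot be transitive on $\Omega$ (it fixes each block setwise), so $K$ must be semiregular. This is the key structural constraint that I would extract first, and it immediately limits the order and action of $K$.

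Next I would split the analysis according to the $2$-transitive type of $G^\calB$, which by the classification of $2$-transitive groups is either almost simple or affine. In the \emph{almost simple} case I expect to land in cases (a) or (b) of the statement. The hypothesis that $G$ is not innately transitive is crucial here: it means no minimal normal subgroup is transitive, and combined with semiprimitivity this forces a non-split extension of an almost simple group by a group of prime order (precisely the difficulty flagged in the introduction). I would use the structure of $2$-transitive almost simple groups together with the Schur multiplier data to pin down which quasisimple covers can occur, yielding the exceptional pairs $(3.S_6, S_5)$ and $(2.\mathrm{M}_{12}, \mathrm{M}_{11})$ in (a), and the large linear family in (b). For (b) the block structure should be read off from the $C$-orbits on $\F^d\setminus\{0\}$, where $C$ is a cyclic subgroup of the scalars, and the primitive-prime-divisor condition on $r$ will emerge from requiring the rank to be exactly $3$; I would verify the sandwich $\SL_d(q)C/C \leqs G \leqs \GammaL_d(q)/C$ by identifying the semiregular kernel with $C$ and computing the induced groups on $\calB$ and on blocks.

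The \emph{affine} case, where $G^\calB$ is affine, should deliver conclusion (c). Here $G^\calB$ has an elementary abelian regular normal subgroup $V$, and I would pull this back through the quotient map to study its preimage in $G$. The semiregular kernel $K$ sits inside this preimage, and I expect the regular normal subgroup $N$ of $G$ to arise as an extension of $V$ by $K$. The main work is to show that $N$ is a $p$-group for a single prime $p$ and that it is \emph{special}: both $K$ and $V$ should be $p$-groups (the primes matching because of how the block action interacts with the $2$-transitive affine action on $\calB$), and the commutator and Frattini structure $N' = Z(N) = \Phi(N)$ being elementary abelian would follow from analysing the action of a point stabiliser $H$, which acts as a rank $3$ group with the right orbit structure. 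I would use the fact that $H$ normalises $K$ and acts irreducibly (or with a controlled orbit structure) on the relevant sections to force the special structure.

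The hardest step, I anticipate, is the almost simple case and specifically ruling out or enumerating the non-split extensions: one must marshal detailed information about Schur multipliers and the permutation actions of covers of $2$-transitive almost simple groups, and show that the rank $3$ condition together with semiprimitivity (but failure of innate transitivity) singles out exactly the two sporadic-type examples in (a) and the generic linear family in (b). Establishing that the regular normal subgroup in (c) is genuinely special — rather than merely a $p$-group — is the second substantial obstacle, since it requires a careful commutator calculation governed by the rank $3$ hypothesis; I would lean on the fact that a point stabiliser has exactly two non-trivial orbits to constrain $[N,N]$ and $Z(N)$, forcing the special structure.
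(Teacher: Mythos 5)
Your top-level decomposition (semiregular kernel $K$, then split by $G^\calB$ almost simple versus affine) is indeed the paper's skeleton, but there is a genuine gap on the affine side: the special structure of $N$ does \emph{not} follow from a commutator calculation driven by the point stabiliser's orbit structure, and the argument you sketch would fail. What the rank $3$ hypothesis gives is only that $\Aut(N)$ has at most $3$ orbits on $N$, and the classification of such groups (Lemma \ref{l:regularnormal}, which rests on cited results of Laffey--MacHale and Shult, not on a direct commutator computation) genuinely allows, besides the special groups, a Frobenius $\{p,q\}$-group and the homocyclic groups $N\cong\bbZ_{p^2}^k$. The homocyclic case is \emph{not} special ($N'=1\ne Z(N)=N$), yet it is perfectly compatible with "a point stabiliser has exactly two non-trivial orbits on $N$", so any purely orbit-theoretic argument forcing $N'=Z(N)=\Phi(N)$ would be wrong. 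Eliminating $\bbZ_{p^2}^k$ is where most of the affine-case work lies (Lemmas \ref{l:S=T}--\ref{l:T_G2}): one shows via Lemma \ref{l:semi}(iii) that $G_{B,B'}^\calB$ has a subgroup of index $p^k$ inside a transitive linear group, invokes Guralnick's theorem (Theorem \ref{t:prime-power}) on subgroups of prime-power index, performs the matrix computation $(I+E+pM)^p = I+pE$ in $\GL_k(\bbZ/p^2\bbZ)$ to force $p\in\{2,3\}$ (Lemma \ref{l:pre-image_p=2,3}), and finishes with computer checks such as $\GL_a(\bbZ/4\bbZ)$ containing no copy of $\SL_a(2)$. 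Your proposal never engages with this case at all.

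On the almost simple side your Schur-multiplier plan matches the paper's Lemma \ref{l:K.R}, but as written it is incomplete in two ways. First, before any multiplier data applies you must show $K$ is central in its preimage $K.R$, i.e.\ $C_G(K)>K$; this is Lemma \ref{l:C_G(K)_ne_K}, whose proof leans on the nontrivial elimination Lemma \ref{l:C=L,KB=1,Xas} (again via transitive linear groups), not on semiprimitivity alone. Second, the enumeration produces candidates beyond your anticipated answer: $K\cong\bbZ_2^2$ with $R\in\{{}^2B_2(8),\PSL_3(4)\}$, the double covers $2.A_m$, and most significantly the quasisimple family $K.R\cong\SU_3(q)$ with $3\mid q+1$ acting on isotropic points. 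This unitary family looks exactly like a case (b)-type configuration and is killed only by the delicate Sylow $3$-subgroup and involution-centraliser argument of Lemma \ref{l:PSU_non-exist}; your sketch offers no mechanism to exclude it, so the case division would terminate with undischarged families. Finally, even in the surviving $\SL_d(q)$ family, identifying $G$ with the Proposition \ref{p:PSL} construction up to permutation isomorphism requires the normaliser computation $N_{\Sym(\Delta)}(\overline{\SL}) = \overline{\GammaL}$ (Lemma \ref{l:normalsl}) and the uniqueness-of-point-stabiliser argument of Lemma \ref{l:isoolS}, which your outline elides.
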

	
	We remark that the groups arising in part (a) have rank $3$. Similarly, we can determine precise necessary and sufficient conditions for the groups in (b) to have rank $3$ (see Proposition \ref{p:PSL}) and we note that there are genuine examples.
	
	Let us comment on part (c) of Theorem \ref{thm:semi}. Let $G$ be an arbitrary rank $3$ permutation group with a regular normal subgroup $N$. First note that  $N\normeq G\leqs N{:}\Aut(N)$, and  $\Aut(N)$ has at most $3$ orbits on $N$. A key result needed for part (c) is Lemma~\ref{l:regularnormal}, which asserts that if $N$ is a finite group and $\Aut(N)$ has at most $3$ orbits on $N$, then it is isomorphic to one of the following groups (here $p$ and $q$ are distinct primes):
	\begin{itemize}\addtolength{\itemsep}{0.2\baselineskip}
		\item[{\rm (i)}] an elementary abelian $p$-group;
		\item[{\rm (ii)}] a Frobenius $\{p,q\}$-group;
		\item[{\rm (iii)}] a homocyclic group $\bbZ_{p^2}^k$ for some $k$;
		\item[{\rm (iv)}] a special $2$-group of exponent $4$;
		\item[{\rm (v)}] a non-abelian special $p$-group of exponent $p\geqs 3$.
	\end{itemize}
	Under the additional assumption that $G$ is semiprimitive, we will prove that $N$ is not of type (ii) or (iii) in the above list of possibilities. For example, Lemma \ref{l:p=r_affine} shows that (ii) does not occur. It turns out that there is a genuine rank $3$ semiprimitive group with $N$ of type (i); we refer the reader to Example~\ref{ex:affine}. We will return to the classification of semiprimitive rank $3$ groups arising in part (c) in future work.
	
	Next, let us turn to the more ambitious goal of determining all the finite permutation groups of rank $3$. In this setting, a classical result of Higman \cite{H_rank3} states that if $G\leqs\mathrm{Sym}(\Omega)$ is imprimitive of rank $3$, then $G$ has a unique non-trivial block system $\calB$, and the induced permutation group $G^\calB$ on $\calB$ is $2$-transitive (and is therefore affine or almost simple). The first systematic study of imprimitive rank $3$ permutation groups was initiated by Devillers et al. in \cite{DGLPP_rank3}. For example, if $G_B$ denotes the setwise stabiliser of a block $B\in\calB$, then \cite[Lemma 2.1]{DGLPP_rank3} shows that the induced permutation group $G_B^B$ is $2$-transitive. In the same paper, the authors classify the rank $3$ groups $G$ under the assumption that $G_B^B$ is almost simple; see Theorem \ref{t:DGLPP_1} (recall that a group $X$ is \textit{almost simple} if its \textit{socle} $\soc(X)$ is non-abelian simple). In view of these results, it is worth noting that $G_B^B$ is affine for all the groups arising in Theorem \ref{thm:semi}. In addition, $G^\calB$ is almost simple if $G$ is one of the groups listed in cases (a) and (b), whereas $G^\calB$ is affine in case (c).
	
	The main result of this paper is Theorem \ref{thm:reduction} below. This  extends Theorem \ref{thm:semi} by treating the general case where $G_B^B$ is affine. In the statement, we write $K=G_{(\calB)}$ for the kernel of the action of $G$ on $\calB$. 
	
	\begin{theorem}\label{thm:reduction}
		Let $G\leqs\Sym(\Omega)$ be a finite transitive rank $3$  permutation group with non-trivial block system $\calB$. Let $B,B' \in \calB$ be distinct blocks and assume $G_B^B$ is affine. Then one of the following holds:
		\begin{itemize}\addtolength{\itemsep}{0.2\baselineskip}
			\item[{\rm (A)}] $G$ is either innately transitive, or it is permutation isomorphic to a group recorded in case {\rm (a)} or {\rm (b)} of Theorem \ref{thm:semi};
			\item[{\rm (B)}] $N\normeq G\leqs N{:}\Aut(N)$, where $N$ is a regular normal subgroup and $\Aut(N)$ has at most $3$ orbits on $N$;
			\item[{\rm (C)}] $K_{(B)}$ is transitive on $B'$;
			\item[{\rm (D)}] $K_{(B)}\ne 1$ is intransitive on $B'$, and $G$ has an elementary abelian self-centralising normal subgroup.
		\end{itemize}
	\end{theorem}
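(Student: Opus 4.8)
The plan is to split into cases according to the pointwise stabiliser $K_{(B)}$ of the block $B$ inside the kernel $K=G_{(\calB)}$. Two preliminary observations organise the argument. First, since $G^\calB$ is $2$-transitive, $G_B$ is transitive on $\calB\setminus\{B\}$, so for any two blocks $B',B''\ne B$ the actions of $K_{(B)}$ on $B'$ and on $B''$ are permutation isomorphic; hence the alternative ``$K_{(B)}$ transitive on $B'$'' versus ``$K_{(B)}$ intransitive on $B'$'' is independent of the chosen $B'$. Second, from $K\normeq G$ and $G_{(B)}\normeq G_B$ we get $K_{(B)}=K\cap G_{(B)}\normeq G_B$, and as $K_{(B)}$ fixes $\alpha\in B$ this also yields $K_{(B)}\normeq G_\alpha$; in particular $K_{(B)}$ has orbits of constant size on the suborbit $\Omega\setminus B$, on which $G_\alpha$ is transitive. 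With these in place, the case where $K_{(B)}$ is transitive on $B'$ is exactly conclusion (C) and needs nothing more.

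Suppose next that $K_{(B)}=1$. Here the plan is to apply Proposition \ref{p:red} to reduce to the case that $G$ is semiprimitive. Then either $G$ is innately transitive, which is part of (A), or Theorem \ref{thm:semi} applies: its cases (a) and (b) are recorded in (A), while case (c) yields a regular normal subgroup $N$. In the latter situation, and more generally whenever $G$ has a regular normal subgroup, conclusion (B) follows at once: identifying $\Omega$ with $N$, the stabiliser $G_\alpha$ embeds in $\Aut(N)$ and has exactly three orbits on $N$, so $\Aut(N)$ has at most three orbits on $N$ and $G\leqs N{:}\Aut(N)$.

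This leaves the decisive case $K_{(B)}\ne1$ with $K_{(B)}$ intransitive on $B'$, where I must exhibit an elementary abelian self-centralising normal subgroup of $G$, that is, a $V\normeq G$ with $V$ elementary abelian and $C_G(V)=V$; note that such a $V$ forces $G/V\hookrightarrow\GL(V)$, so $G$ is of affine type abstractly. I would first observe that here $K$ is transitive on every block: if $K^B=1$ then $K=K_{(B)}$ fixes $B$ pointwise, whence $K=1$ by transitivity of $G$, contradicting $K_{(B)}\ne1$; so $K^B\ne1$ is a non-trivial normal subgroup of the affine $2$-transitive group $G_B^B$ and is therefore transitive on $B$. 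Thus the $K$-orbits are exactly the blocks, $G/K=G^\calB$ is $2$-transitive on $\calB=\Omega/K$, and $G_B^B$ is affine of some characteristic $p$, so $|B|=p^k$. Writing $V^B\leqs G_B^B$ for the translation subgroup and using that $Z(K)$ acts semiregularly on each block (an element of $Z(K)$ fixing a point of the transitive $K$-orbit $B$ must fix $B$ pointwise), I would locate a non-trivial normal $p$-subgroup of $G$ and then argue towards $F^*(G)=O_p(G)$: rule out components, so that $E(G)=1$, and rule out any contribution to $F(G)$ from primes other than $p$. Finally I would extract from $O_p(G)$ a $G$-invariant elementary abelian subgroup that is self-centralising in $G$, giving (D).

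The main obstacle is this last case, and within it two points. The first is to eliminate components and the ``wrong'' primes so as to force $F^*(G)$ to be a $p$-group; this has to be carried out uniformly whether $G^\calB$ is affine or almost simple, and it is here that the intransitivity of $K_{(B)}$ on $B'$ — which pins down the internal structure of $K$ and its action across the blocks — must be combined with the two $2$-transitive affine actions, on $B$ and on $\calB$. The second, more delicate, point is to produce an elementary abelian self-centralising subgroup that is genuinely normal in the whole of $G$, and not merely in a block stabiliser or inside $O_p(G)$: the extraspecial examples underlying case (c) of Theorem \ref{thm:semi} show that $O_p(G)$ itself may be special rather than elementary abelian, so establishing the $G$-invariance of the required subgroup is exactly the step that needs the rank $3$ hypothesis in full.
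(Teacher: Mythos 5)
Your dispatch of case (C) and your observation that a regular normal subgroup immediately yields (B) are both correct and match the paper, but the two substantive cases are not actually proved, and in both the missing idea is the same: Proposition \ref{p:red} is a \emph{trichotomy}, and its first alternative, $C_G(L)=L$ where $L=G\cap\soc(G_B^B)^n$, is where all the content of conclusion (D) lives. In the paper the elementary abelian self-centralising normal subgroup of (D) is never constructed — it \emph{is} $L$, which is elementary abelian and normal in $G$ by definition, and self-centralising precisely in case (i) of Proposition \ref{p:red}; cases (ii) and (iii) are routed to (B) and (A) respectively (the latter via Theorem \ref{thm:semi} combined with Theorem \ref{t:innately}, with case (c) of Theorem \ref{thm:semi} absorbed into (B)). You instead invoke Proposition \ref{p:red} only in the $K_{(B)}=1$ branch, and in the branch $K_{(B)}\ne1$ intransitive you set out to manufacture the subgroup from scratch by forcing $F^*(G)=O_p(G)$ and extracting a $G$-normal elementary abelian self-centralising subgroup from $O_p(G)$ — a step you yourself concede is unresolved. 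It is also the wrong target: when $C_G(L)>L$ the group may instead have a regular normal special $p$-group (as in Examples \ref{ex:sylowsu} and \ref{ex:extrasp}, where no elementary abelian self-centralising normal subgroup exists), and such groups must be siphoned off into (A)/(B) by the trichotomy \emph{before} one attempts (D); your plan has no mechanism for this, whereas the paper gets (D) for free as the residual case $C=L$.

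The second gap is your claim that, when $K_{(B)}=1$, Proposition \ref{p:red} "reduces to the case that $G$ is semiprimitive". It does not: its case (i), $C=L$, is fully compatible with $K\ne1$ and $K_{(B)}=1$ (note that $K_{(B)}=1$ does not make $K$ semiregular — a point stabiliser $K_\alpha$ can be non-trivial even when the pointwise stabiliser of the whole block is trivial). In that configuration none of (A)--(D) is available, so the theorem survives only because no rank $3$ group exists there, and establishing this non-existence is a substantial, CFSG-dependent part of the paper's proof: Lemma \ref{l:L_semireg} shows $G_{(B)}=1$ and that $G/L$ and $G_B/L$ act faithfully and transitively on $L\setminus\{1\}$; the divisibility $|B|^2\mid |G_{B,B'}|$ from Lemma \ref{l:DGLPP:l:4.2} then forces these quotients into the short list of Corollary \ref{c:trans_linear_prime-power}, and Lemmas \ref{l:C=L,KB=1,Xas} and \ref{l:C=L,KB=1,Xaffine} eliminate every candidate — in the affine-$G^\calB$ subcase using Lemma \ref{l:norm_series} and Guralnick's Theorem \ref{t:prime-power} via Lemma \ref{l:trans_linear_prime-power_index}. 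None of this elimination appears in your proposal, so your case analysis is incomplete exactly where the clause "$K_{(B)}\ne1$" in conclusion (D) is earned.
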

	
	We will use Theorem \ref{thm:semi} to establish Theorem \ref{thm:reduction} in Section \ref{s:reduction}, providing an independent proof of Theorem \ref{thm:semi} in Section \ref{s:semi_proof}.
	
	As noted above, the possibilities for the group $N$ in Theorem \ref{thm:reduction}(B) are described in Lemma \ref{l:regularnormal} and they are completely determined in \cite{Z_3orbits}. So in order to classify the rank $3$ groups arising in Theorem \ref{thm:reduction}(B), it suffices to determine the proper subgroups of $\Aut(N)$ with exactly $3$ orbits on $N$.
	
	A full classification of the finite transitive permutation groups of rank $3$ remains out of reach and this seems to be a very difficult problem (for example, see the discussion in \cite{BDP_innately,DGLPP_rank3}). However, Theorem \ref{thm:reduction} provides a reduction of the general problem to two special classes, as described in the following corollary.
	
	\begin{corol}
		\label{cor:remaining}
		Let $G\leqs\mathrm{Sym}(\Omega)$ be a finite permutation group of rank $3$. Then one of the following holds:
		\begin{itemize}\addtolength{\itemsep}{0.2\baselineskip}
			\item[{\rm (i)}] $G$ lies in class {\rm (C)} or {\rm (D)} of Theorem \ref{thm:reduction};
			\item[{\rm (ii)}] $G$ is in a known list;
			\item[{\rm (iii)}] $N\normeq G\leqs N{:}\Aut(N)$ with $N$ given in \cite{Z_3orbits}.
		\end{itemize}
	\end{corol}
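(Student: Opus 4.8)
The plan is to derive the corollary directly from Theorem \ref{thm:reduction}, combined with the classifications already available in the literature, by a case analysis driven by Higman's structure theorem. Since every rank $3$ group is transitive, I would first split on primitivity. If $G$ is primitive of rank $3$, then by the Liebeck--Saxl classification \cite{L_rank3,LS_rank3} (building on \cite{B_rank3,KL_rank3}) the group $G$ appears in a known list, so conclusion (ii) holds; hence I may assume that $G$ is imprimitive.

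In the imprimitive case I would invoke Higman's theorem \cite{H_rank3} to obtain the unique non-trivial block system $\calB$, and then apply \cite[Lemma 2.1]{DGLPP_rank3} to deduce that the induced group $G_B^B$ on a block $B$ is $2$-transitive; in particular $G_B^B$ is either almost simple or affine. If $G_B^B$ is almost simple, then Theorem \ref{t:DGLPP_1} places $G$ in a known list and (ii) holds again. This reduces the problem to the case where $G_B^B$ is affine, which is exactly the hypothesis of Theorem \ref{thm:reduction}.

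It then remains to match the four outcomes (A)--(D) of Theorem \ref{thm:reduction} against the three conclusions of the corollary. In case (A), either $G$ is innately transitive, in which case it lies in the known list described in Theorem \ref{t:innately} (from \cite{BDP_innately}), or $G$ is permutation isomorphic to one of the explicit groups in part (a) or (b) of Theorem \ref{thm:semi}, with the rank $3$ members of family (b) pinned down by Proposition \ref{p:PSL}; either way conclusion (ii) holds. Case (B) is precisely conclusion (iii), since by Lemma \ref{l:regularnormal} the regular normal subgroup $N$ is one of the five listed types, all of which are enumerated in \cite{Z_3orbits}. Finally, cases (C) and (D) together constitute conclusion (i) by definition. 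Assembling these cases yields the corollary.

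I would expect no deep obstacle here: the corollary is essentially a repackaging of Theorem \ref{thm:reduction} once the primitive and almost-simple-block cases are routed through the pre-existing classifications. The only point requiring genuine care is verifying that case (A) falls within a ``known list'' in a precise sense --- this leans on the innately transitive classification of \cite{BDP_innately} and on the explicit descriptions underlying parts (a) and (b) of Theorem \ref{thm:semi}, and one should confirm that these references jointly account for all the relevant groups without omission.
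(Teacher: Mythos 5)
Your proposal is correct and takes essentially the same approach as the paper, which presents Corollary \ref{cor:remaining} as an immediate consequence of Theorem \ref{thm:reduction} combined with the pre-existing classifications (primitive rank $3$ groups via Liebeck--Saxl, the almost simple block case via Theorem \ref{t:DGLPP_1}, and the innately transitive case via Theorem \ref{t:innately}). Your matching of the outcomes --- case (A) to conclusion (ii), case (B) to conclusion (iii) with $N$ as in Lemma \ref{l:regularnormal} and \cite{Z_3orbits}, and cases (C)/(D) to conclusion (i) --- is exactly the intended argument.
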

	
	We note that any permutation group $G$ satisfying the condition in 
	Theorem \ref{thm:reduction}(C) has rank $3$ (see Corollary \ref{c:KB_trans}). This includes the groups with $\soc(G_B^B)^{|\calB|}\leqs G$, and we refer the reader to Example \ref{ex:trans} for an infinite family of groups that arise here with the additional property that $\soc(G_B^B)^{|\calB|}$ is not a subgroup of $G$.
	
	We have been unable to identify a genuine rank $3$ group satisfying the conditions in Theorem \ref{thm:reduction}(D), so we pose the following open problem.
	
	\begin{prob}
		Is there a rank $3$ group satisfying the conditions in Theorem \ref{thm:reduction}(D)?
	\end{prob}
	
	Further properties of such a group (should one exist) will be given in Lemma \ref{l:KB>1_intrans}. In particular, it is shown that $K$ is a Frobenius group with cyclic complement.

	%

	\subsection*{Notation}
	
	For a positive integer $n$ and a prime number $p$, we write $(n)_p$ for the highest power of $p$ dividing $n$. Suppose $G$ is a group acting on a set $\Delta$ and $\Sigma\subseteq \Delta$. Then we write $G_{(\Sigma)}$ and $G_\Sigma$ for the pointwise and setwise stabilisers of $\Sigma$ in $G$, respectively. If $\Sigma = \{\alpha,\beta\}$ then we simply write $G_{\alpha,\beta}$ for the pointwise stabiliser $G_\a\cap G_\b$. The induced permutation group of $G$ on $\Delta$ is denoted $G^\Delta$, so $G^\Delta \cong G/G_{(\Delta)}$. We also use $G^\infty$ for the unique perfect group appearing in the derived series of $G$, and we adopt the standard notation for simple groups of Lie type from \cite{KL_classical}.

	\subsection*{Acknowledgment}
	
	This work was partially supported by NNSFC grant no. 11931005. The first author thanks the China Scholarship Council for supporting his doctoral studies at the University of Bristol. He also thanks the Southern
	University of Science and Technology (SUSTech) for their generous hospitality during a
	visit in 2023.

	\section{Preliminaries}
	
	\label{s:pre}

	\subsection{The structure of imprimitive groups of rank $3$}
	
	\label{ss:pre_struc}
	
	In this section, we record some preliminary results on the structure of imprimitive groups of rank $3$, most of which are taken from \cite{DGLPP_rank3}. As shown in \cite[Lemma 3.1]{DGLPP_rank3}, any permutation group of rank $3$ is permutation isomorphic to a group $G$ satisfying the following hypothesis.
	
	\begin{hypo}
		\label{hypo:imprim}
		Let $Y$ and $X$ be $2$-transitive permutation groups on a finite set $B$ and $\{1,\dots,n\}$, respectively. Assume $G\leqs Y\wr X$ acts imprimitively on $\Omega = B\times \{1,\dots,n\}$ with associated non-trivial block system $\calB$, where $G$ induces $Y$ on $B$ and $X$ on $\{1,\dots,n\}$.
	\end{hypo}
	
	More specifically, we have $\calB = \{(B,1),\dots,(B,n)\}$, where $G^\calB\cong X$ and $G_{(B,1)}^{(B,1)}\cong Y$. Without loss of generality, we may identify $(B,1)$ with $B$ and $\calB = B^G$, so we may write $Y = G_B^B$ from now on. The following lemmas are \cite[Lemmas 2.3 and 4.2]{DGLPP_rank3}, respectively.
	
	\begin{lem}
		\label{l:DGLPP:l:2.3}
		Assume Hypothesis \ref{hypo:imprim}, where $G$ has rank $3$ on $\Omega$. Then $\calB$ is the unique non-trivial block system of $G$.
	\end{lem}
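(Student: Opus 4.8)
The plan is to translate the rank $3$ hypothesis into a statement about suborbits, invoke the standard correspondence between block systems and blocks through a fixed point, and then close the argument with an elementary divisibility estimate.

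First I would fix a point $\alpha\in\Omega$ and set $H=G_\alpha$. Since $G$ has rank $3$, the stabiliser $H$ has exactly three orbits on $\Omega$, namely the fixed point $\{\alpha\}$ together with two non-trivial suborbits $\Delta_1,\Delta_2$, of sizes $a=|\Delta_1|\geqs 1$ and $b=|\Delta_2|\geqs 1$; in particular $|\Omega|=1+a+b$. Next I would recall the standard fact that the non-trivial block systems of the transitive group $G$ are in bijection with the proper non-trivial blocks containing $\alpha$: every block system has a unique block through $\alpha$, and that block recovers the whole system as the set of its $G$-translates. Thus it suffices to prove that $G$ has a \emph{unique} proper non-trivial block containing $\alpha$.

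The key observation is that any block $B$ with $\alpha\in B$ is $H$-invariant: for each $h\in H$ we have $\alpha\in B\cap B^h$, and since distinct blocks are disjoint this forces $B^h=B$. Hence $B$ is a union of $H$-orbits, and the only candidate proper non-trivial blocks containing $\alpha$ are $B_1=\{\alpha\}\cup\Delta_1$ and $B_2=\{\alpha\}\cup\Delta_2$. The crux is then to rule out that both $B_1$ and $B_2$ are genuine blocks. Here I would use that all blocks in a block system have equal size and partition $\Omega$, so the size of any block divides $|\Omega|$. If $B_1$ were a block, then $1+a=|B_1|$ divides $|\Omega|=1+a+b$, so $(1+a)\mid b$ and therefore $1+a\leqs b$; symmetrically, if $B_2$ were a block then $1+b\leqs a$. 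Adding these two inequalities gives $2+a+b\leqs a+b$, a contradiction.

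Consequently at most one of $B_1,B_2$ can be a block, so the proper non-trivial block through $\alpha$ is unique. Since $\calB$ is given to be a non-trivial block system, such a block exists, and by the correspondence above $\calB$ is the unique non-trivial block system of $G$. I expect no serious obstacle in this argument; the only points needing a little care are the clean setup of the block--subgroup correspondence and the remark that both suborbits are non-empty, which is guaranteed precisely because the rank is exactly $3$. After that the divisibility estimate finishes the proof immediately.
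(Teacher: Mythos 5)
Your proof is correct and follows essentially the same route as the source: the paper does not prove this lemma itself but quotes it as \cite[Lemma 2.3]{DGLPP_rank3}, and the proof there (going back to Higman) is exactly your argument that a non-trivial block through $\alpha$ is $H$-invariant, hence one of $\{\alpha\}\cup\Delta_1$ or $\{\alpha\}\cup\Delta_2$, with the divisibility count $1+a\leqs b$ and $1+b\leqs a$ ruling out coexistence. Nothing further is needed.
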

	
	\begin{lem}
		\label{l:DGLPP:l:4.2}
		Assume Hypothesis \ref{hypo:imprim}. Let $B'\in\calB\setminus\{B\}$, $\beta\in B$ and $\beta'\in B'$. Then $G$ has rank $3$ if and only if the following conditions hold:
		\begin{itemize}\addtolength{\itemsep}{0.2\baselineskip}
			\item[{\rm (i)}] $G_{B,B'}$ acts transitively on $B\times B'$; and
			\item[{\rm (ii)}] $G_\beta$ is transitive on $\calB\setminus\{B\}$.
		\end{itemize}
		Moreover, if $G$ has rank $3$, then $G_{\beta,\beta'}$ is a subgroup of index $|B|^2$ in $G_{B,B'}$.
	\end{lem}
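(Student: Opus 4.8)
The plan is to compute the rank directly as the number of orbits of a point stabiliser, and then translate the resulting transitivity conditions into (i) and (ii). Since $G$ satisfies Hypothesis \ref{hypo:imprim}, it is transitive on $\Omega$ (it is transitive on $\calB$ because $X$ is, and $G_B$ is transitive on $B$ because $Y$ is), so the rank of $G$ equals the number of $G_\beta$-orbits on $\Omega$. First I would pin down the orbits meeting the block $B$: as $\beta$ is fixed by $G_\beta$, so is its block, whence $G_\beta \leqs G_B$ and the image of $G_\beta$ in $Y = G_B^B$ is the point stabiliser $Y_\beta$; since $Y$ is $2$-transitive, $Y_\beta$ is transitive on $B\setminus\{\beta\}$, so $\{\beta\}$ and $B\setminus\{\beta\}$ are the only $G_\beta$-orbits meeting $B$. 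Consequently $G$ has rank $3$ if and only if $G_\beta$ is transitive on $\Omega\setminus B$.

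The next step is to decompose this transitivity using the block map $\pi\colon\Omega\setminus B\to\calB\setminus\{B\}$, which is $G_\beta$-equivariant and surjective with fibres the blocks $B'\ne B$. A standard fibration argument then shows that $G_\beta$ is transitive on $\Omega\setminus B$ if and only if $G_\beta$ is transitive on $\calB\setminus\{B\}$ (which is exactly (ii)) and the stabiliser $(G_\beta)_{B'}=G_{B,B',\beta}$ is transitive on the fibre $B'$. It then remains to match these two conditions with (i).

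The key linking step, which I expect to be the main obstacle, is to show that (ii) is equivalent to $G_{B,B'}$ being transitive on $B$. I would establish this by applying the standard product-transitivity criterion (a group is transitive on a product projecting to each factor iff it is transitive on one factor and a point stabiliser is transitive on the other) to the action of $G_B$ on $B\times(\calB\setminus\{B\})$ in two ways. Here $G_B$ is transitive on $B$ (as $Y$ is) and on $\calB\setminus\{B\}$ (as $X$ is $2$-transitive, its point stabiliser $X_B$ is transitive on the remaining blocks); hence $G_B$ is transitive on the product if and only if $(G_B)_\beta=G_\beta$ is transitive on $\calB\setminus\{B\}$, and also if and only if $(G_B)_{B'}=G_{B,B'}$ is transitive on $B$. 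Equating the two criteria gives (ii) $\iff$ $G_{B,B'}$ transitive on $B$. Applying the same criterion to $G_{B,B'}$ acting on $B\times B'$ shows that (i) holds if and only if $G_{B,B'}$ is transitive on $B$ and $G_{B,B',\beta}$ is transitive on $B'$. Combining these facts, $G$ has rank $3$ if and only if $G_{B,B'}$ is transitive on $B$ and $G_{B,B',\beta}$ is transitive on $B'$, which is precisely (i); and since (i) already forces $G_{B,B'}$ to be transitive on $B$, it forces (ii) as well. This yields the stated equivalence.

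Finally, for the \emph{moreover} clause, assume $G$ has rank $3$, so (i) holds and $G_{B,B'}$ is transitive on $B\times B'$, a set of size $|B|^2$ since every block has size $|B|$. The stabiliser of $(\beta,\beta')$ in $G_{B,B'}$ is $G_{B,B',\beta,\beta'}=G_{\beta,\beta'}$, because fixing $\beta$ and $\beta'$ forces their blocks $B$ and $B'$ to be fixed; the orbit-stabiliser theorem then gives $[G_{B,B'}:G_{\beta,\beta'}]=|B|^2$, as required.
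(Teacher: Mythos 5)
Your proof is correct. A preliminary remark: the paper offers no proof of this lemma at all --- it is quoted verbatim from \cite[Lemma 4.2]{DGLPP_rank3} --- so the only possible comparison is with that source, and your argument follows the same natural outline while being fully self-contained. The skeleton is sound at every step: since $G_{(B)}\leqs G_\beta$, the image of $G_\beta$ in $G_B^B=Y$ is the \emph{full} point stabiliser $Y_\beta$ (a point worth making explicit, as you do), so $2$-transitivity of $Y$ pins down the orbits $\{\beta\}$ and $B\setminus\{\beta\}$ inside $B$, reducing rank $3$ to transitivity of $G_\beta$ on $\Omega\setminus B$; the $G_\beta$-equivariant fibration onto $\calB\setminus\{B\}$ is applied correctly; and the hypotheses of your product-transitivity criterion are genuinely available, because the image of $G_B$ in $G^\calB = X$ is the full stabiliser $X_B$, which is transitive on $\calB\setminus\{B\}$ by $2$-transitivity of $X$ (this, together with $|B|\geqs 2$ and $n\geqs 2$ from non-triviality of $\calB$, is used implicitly but is true under Hypothesis \ref{hypo:imprim}). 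The genuinely nice feature of your write-up is the double application of the product criterion to $G_B$ acting on $B\times(\calB\setminus\{B\})$, which yields that condition (ii) is equivalent to transitivity of $G_{B,B'}$ on $B$; combined with the criterion for $G_{B,B'}$ on $B\times B'$, this shows that (i) alone is equivalent to rank $3$ and already forces (ii), from which the stated two-condition equivalence follows at once --- a sharper structural observation than the statement itself requires, and your logic handling this redundancy is correct. The moreover clause is also fine: fixing $\beta$ and $\beta'$ forces the blocks $B$ and $B'$ to be fixed setwise, so the stabiliser of $(\beta,\beta')$ in $G_{B,B'}$ is exactly $G_{\beta,\beta'}$, and orbit--stabiliser applied to the transitive action on $B\times B'$, a set of size $|B|^2$ since all blocks of a transitive group have equal size, gives the index $|B|^2$.
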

	
	Note that the kernel of $G$ acting on $\calB$ is
	\begin{equation*}
	K:=G_{(\calB)} = G\cap Y^n.
	\end{equation*}
	
	\begin{cor}
		\label{c:KB_trans}
		Assume Hypothesis \ref{hypo:imprim} and let $B'\in\calB\setminus\{B\}$. Then $G$ has rank $3$ on $\Omega$ if $K_{(B)}$ is transitive on $B'$.
	\end{cor}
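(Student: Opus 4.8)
The plan is to verify the two conditions of Lemma \ref{l:DGLPP:l:4.2}, namely that $G_{B,B'}$ is transitive on $B\times B'$ and that $G_\beta$ is transitive on $\calB\setminus\{B\}$ for $\beta\in B$; by that lemma these together are equivalent to $G$ having rank $3$. The starting observation is that, since $K=G_{(\calB)}\normeq G$ and $G^\calB$ is $2$-transitive on $\calB$, there is an element $g\in G$ with $g(B)=B'$ and $g(B')=B$ as setwise maps (the ordered pair $(B,B')$ may be sent to $(B',B)$). Conjugation by $g$ carries the pointwise stabiliser $K_{(B)}$ onto $K_{(B')}$, because $k$ fixes $B$ pointwise exactly when $gkg^{-1}$ fixes $g(B)=B'$ pointwise, and $K\normeq G$ ensures $gK_{(B)}g^{-1}\leqs K$. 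Consequently the hypothesis that $K_{(B)}$ is transitive on $B'$ transfers to the statement that $K_{(B')}$ is transitive on $g^{-1}(B)=B'$, equivalently that $K_{(B')}$ is transitive on $B$. Note also that both $K_{(B)}$ and $K_{(B')}$ lie in $G_{B,B'}$: each fixes one of $B,B'$ pointwise and, lying in $K$, stabilises every block setwise.

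For condition (i) I would use these two subgroups to act independently on the two coordinates. Given $(\beta_1,\beta_1'),(\beta_2,\beta_2')\in B\times B'$, first choose $u\in K_{(B')}$ with $u(\beta_1)=\beta_2$ (possible as $K_{(B')}$ is transitive on $B$); since $u$ fixes $B'$ pointwise it leaves $\beta_1'$ in place. Then choose $v\in K_{(B)}$ with $v(\beta_1')=\beta_2'$ (possible as $K_{(B)}$ is transitive on $B'$); since $v$ fixes $B$ pointwise it fixes $\beta_2$. The product $vu\in G_{B,B'}$ sends $(\beta_1,\beta_1')$ to $(\beta_2,\beta_2')$, so $G_{B,B'}$ is transitive on $B\times B'$.

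For condition (ii) the key point is that $K_{(B')}\leqs K$ is already transitive on $B$, so $K$ itself is transitive on $B$. Since $K\normeq G_B$ and $G_\beta$ is the stabiliser of $\beta$ in $G_B$ (of index $|B|$), an orbit–stabiliser count gives the factorisation $G_B=KG_\beta$. Passing to $G_B/K\cong G_B^\calB$, which is the stabiliser of $B$ in the $2$-transitive group $G^\calB$ and is therefore transitive on $\calB\setminus\{B\}$, the factorisation shows that $G_\beta$ surjects onto $G_B^\calB$; as $K$ acts trivially on $\calB$, this forces $G_\beta$ to be transitive on $\calB\setminus\{B\}$. With both conditions of Lemma \ref{l:DGLPP:l:4.2} in hand, $G$ has rank $3$. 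The one genuinely nontrivial step is the transfer from $K_{(B)}$ acting transitively on $B'$ to $K_{(B')}$ acting transitively on $B$: it is what lets the single hypothesis feed both the first-coordinate transitivity needed for (i) and the transitivity of $K$ on $B$ needed for (ii), and it relies crucially on the normality of $K$ together with the $2$-transitivity of the top group $G^\calB$.
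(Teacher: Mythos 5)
Your proof is correct, and it follows the paper's overall strategy of verifying conditions (i) and (ii) of Lemma \ref{l:DGLPP:l:4.2}, but the mechanism for the key transitivity step is genuinely different. The paper observes that $1\ne K^B\normeq G_B^B=Y$ and invokes the primitivity of the $2$-transitive group $Y$ to conclude that $K$ is transitive on $B$, which combined with the hypothesis on $K_{(B)}$ gives transitivity of $K\leqs G_{B,B'}$ on $B\times B'$. You instead choose $g\in G$ interchanging $B$ and $B'$ (available since $G^\calB$ is $2$-transitive) and conjugate, obtaining the stronger and more symmetric fact that the pointwise stabiliser $K_{(B')}$ is itself transitive on $B$; this buys a particularly clean verification of (i), with $u\in K_{(B')}$ and $v\in K_{(B)}$ acting independently on the two coordinates, and it consumes only the $2$-transitivity of the top group $G^\calB$ where the paper uses that of the bottom group $G_B^B$ (both are supplied by Hypothesis \ref{hypo:imprim}, so either input is legitimate). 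For condition (ii) the two arguments coincide in substance: your factorisation $G_B=KG_\beta$, followed by projecting to $G_B^\calB$, is exactly the Frattini-style correction the paper performs element by element (take $x\in G_B$ with $(B')^x=B''$ and adjust by $y\in K$ so that $xy\in G_\beta$). One slip in wording: after conjugating you assert that $K_{(B')}$ is transitive on ``$g^{-1}(B)=B'$'', which cannot be right since $K_{(B')}$ fixes $B'$ pointwise; the relevant set is $g(B')=B$ in your left-action notation, and since the statement you actually use thereafter is that $K_{(B')}$ is transitive on $B$, the error is cosmetic rather than a gap.
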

	
	\begin{proof}
		Assume that $K_{(B)}$ is transitive on $B'$. Note that $1\ne K^B\normeq G_B^B = Y$, and thus $K$ acts transitively on $B$. It follows that $K$ is transitive on $B\times B'$. In view of Lemma \ref{l:DGLPP:l:4.2}, it suffices to show that $G_\beta$ is transitive on $\calB\setminus\{B\}$, where $\beta\in B$. Let $B''\in\calB\setminus\{B\}$. Since $G$ is $2$-transitive on $\calB$, there exists $x\in G_B$ such that $(B')^x = B''$. The transitivity of $K$ on $B$ implies that there exists $y\in K$ such that $\beta^{xy} = \beta$. Hence, $xy\in G_\beta$ and
		\begin{equation*}
		(B')^{xy} = (B'')^y = B''
		\end{equation*}
		as $K = G_{(\calB)}$. This completes the proof.
	\end{proof}

	\begin{lem}
		\label{l:norm_trans}
		Assume Hypothesis \ref{hypo:imprim}, where $G$ has rank $3$ on $\Omega$. Let $N$ be a normal subgroup of $G$. Then either $N\leqs K$, or $N$ is transitive on $\Omega$.
	\end{lem}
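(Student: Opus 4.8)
The plan is to exploit the standard fact that the orbits of a normal subgroup of a transitive group form a system of blocks, combined with the uniqueness of the non-trivial block system recorded in Lemma \ref{l:DGLPP:l:2.3}.

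First I would note that, since $N\normeq G$ and $G$ is transitive on $\Omega$, the set of $N$-orbits on $\Omega$ is a $G$-invariant partition of $\Omega$. Indeed, for $\omega\in\Omega$ and $g\in G$ we have $(\omega^N)^g = (\omega^g)^N$, so $G$ permutes the $N$-orbits among themselves, and the transitivity of $G$ forces it to do so transitively. Hence this partition is either one of the two trivial partitions of $\Omega$ (into singletons, or with $\Omega$ as a single part) or a non-trivial block system of $G$.

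Next I invoke Lemma \ref{l:DGLPP:l:2.3}, which guarantees that $\calB$ is the unique non-trivial block system of $G$. Therefore the $N$-orbits on $\Omega$ coincide with exactly one of: the singletons $\{\omega\}$ for $\omega\in\Omega$, the blocks comprising $\calB$, or the whole set $\Omega$. I then treat each case. If the $N$-orbits are the singletons, then $N=1$ and so certainly $N\leqs K$. If the $N$-orbits are the blocks of $\calB$, then every $B'\in\calB$ equals an $N$-orbit and is thus $N$-invariant, so $N$ acts trivially on $\calB$ and hence $N\leqs G_{(\calB)} = K$. Finally, if the unique $N$-orbit is $\Omega$, then by definition $N$ is transitive on $\Omega$. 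This yields the required dichotomy.

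I do not anticipate a serious obstacle: the proof is essentially the orbit-partition lemma for normal subgroups together with the uniqueness statement of Lemma \ref{l:DGLPP:l:2.3}. The only point requiring care is confirming that the $N$-orbit partition is genuinely one of these three $G$-invariant partitions, and this is exactly where uniqueness of $\calB$ is essential, since a priori there could be intermediate systems of imprimitivity which Lemma \ref{l:DGLPP:l:2.3} rules out. As an alternative route, one could instead begin on $\calB$: since $N^\calB\normeq G^\calB = X$ and $X$ is $2$-transitive, hence primitive, either $N^\calB = 1$, giving $N\leqs K$, or $N^\calB$ is transitive on $\calB$; in the latter case one would still apply the orbit-partition argument to upgrade transitivity on $\calB$ to transitivity on $\Omega$, so the first approach is cleaner.
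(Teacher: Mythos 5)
Your proof is correct and takes essentially the same approach as the paper: the paper likewise observes that the orbits of a non-trivial intransitive normal subgroup form a (non-trivial) block system, which by Lemma \ref{l:DGLPP:l:2.3} must equal $\calB$, forcing $N\leqs G_{(\calB)}=K$. Your explicit three-way case split and the sketched alternative via primitivity of $G^\calB$ are just more detailed packaging of the same argument.
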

	
	\begin{proof}
		Suppose $1\ne N\normeq G$ is intransitive on $\Omega$. Note that an $N$-orbit on $\Omega$ is a block of $G$. Hence, the set of $N$-orbits on $\Omega$ is exactly $\calB$ by Lemma \ref{l:DGLPP:l:2.3}. This implies that $N\leqs G_{(\calB)} = K$.
	\end{proof}
	
	Note that if a rank $3$ group $G$ has a regular normal subgroup $N$, then $N\normeq G\leqs N{:}\Aut(N)$, which implies that $\Aut(N)$ has at most $3$ orbits on $N$. Conversely, if $G = N{:}\Aut(N)$ is the holomorph of a group $N$, where $\Aut(N)$ has exactly $3$ orbits on $N$, then $G$ has rank $3$ with its natural action on $N$.
	
	\begin{lem}
		\label{l:regularnormal}
		Suppose $N\neq 1$ is a finite group such that $\Aut(N)$ has at most $3$ orbits on $N$.
		Then $N$ is isomorphic to one of the following groups, where $p$ and $q$ are distinct primes:
		\begin{itemize}\addtolength{\itemsep}{0.2\baselineskip}
			\item[{\rm (i)}] an elementary abelian $p$-group;
			\item[{\rm (ii)}] a Frobenius $\{p,q\}$-group with a Frobenius complement isomorphic to $\bbZ_q$; 
			\item[{\rm (iii)}] a homocyclic group $\bbZ_{p^2}^k$ for some $k$;
			\item[{\rm (iv)}] a special $2$-group of exponent $4$;
			\item[{\rm (v)}] a non-abelian special $p$-group of exponent $p$ with $p\geqs 3$.
		\end{itemize}
	\end{lem}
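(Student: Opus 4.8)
The plan is to argue by counting the number of $\Aut(N)$-orbits on the set $N\setminus\{1\}$, which is at most two since $\{1\}$ is always a single orbit. If there is only one such orbit, then $\Aut(N)$ is transitive on $N\setminus\{1\}$, so all non-identity elements have a common prime order $p$; hence $N$ is a $p$-group with $Z(N)\ne 1$, and since $Z(N)$ is a non-trivial $\Aut(N)$-invariant subgroup it must equal $N$, so $N$ is elementary abelian and we are in case (i). From now on I assume there are exactly two orbits $O_1,O_2$ on $N\setminus\{1\}$, so at most two element orders occur, and I split into three cases according to whether $N$ is abelian, non-abelian with $Z(N)=1$, or non-abelian with $Z(N)\ne 1$.

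If $N$ is abelian, I would use the characteristic subgroups $\Phi(N)=\langle g^p\rangle$ and $N[p]$. Two distinct primes $p\ne q$ dividing $|N|$ produce elements of orders $p$, $q$ and $pq$, i.e.\ three orbits, so $N$ is a $p$-group; an exponent of $p^3$ or more likewise gives three orders, so the exponent is $p^2$; and if $N\cong\bbZ_{p^2}^a\times\bbZ_p^b$ with $b\geqs 1$ then the order-$p$ elements split according to membership in $\Phi(N)$, again giving three orbits. This forces $N\cong\bbZ_{p^2}^a$, which is case (iii). If $N$ is non-abelian with $Z(N)=1$, then Cauchy gives at least two primes $p,q$, elements of order $p$ and of order $q$ lie in different orbits, and so every non-identity element has order $p$ or $q$ with $O_1,O_2$ exactly these classes. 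A non-identity $p$-element and $q$-element never commute (their product would have order $pq$), so the centraliser of each non-identity element is a $p$-group or a $q$-group; invoking the classification of groups with element orders $\{1,p,q\}$ I would conclude that $N$ is a Frobenius group whose complement is a $q$-group of exponent $q$. Being a Frobenius complement, it is cyclic, so $N$ is as in case (ii).

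The heart of the argument is the non-abelian case with $Z(N)\ne 1$. Centrality is $\Aut(N)$-invariant and both $Z(N)\setminus\{1\}$ and $N\setminus Z(N)$ are non-empty, so these are precisely $O_1$ and $O_2$; transitivity on $O_1$ shows $Z(N)$ is elementary abelian of prime exponent $p$, and transitivity on $O_2$ shows all non-central elements share a common order. I would first prove $N$ is special: if $\Phi(N)$ met $O_2$ then $O_2\subseteq\Phi(N)$ and the generators of $N$ would all be central, forcing $N$ abelian, so $\Phi(N)\leqs Z(N)$; were this proper, the three non-empty invariant sets $\Phi(N)\setminus\{1\}$, $Z(N)\setminus\Phi(N)$, $N\setminus Z(N)$ would give three orbits, so $\Phi(N)=Z(N)$. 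Since $Z(N)\setminus\{1\}$ is a single orbit, $Z(N)$ has no proper non-trivial $\Aut(N)$-invariant subgroup, and therefore the non-trivial subgroup $N'\leqs\Phi(N)=Z(N)$ equals $Z(N)$; thus $N'=\Phi(N)=Z(N)$ is elementary abelian and $N$ is special. I would then exclude mixed primes: a prime $q\ne p$ dividing $|N|$ would make every element of prime-power order with all $p$-elements central, whence Schur--Zassenhaus gives $N=P\times Q$ with $P=Z(N)$ central, forcing $Z(Q)=1$ for a non-trivial $q$-group — a contradiction. So $N$ is a special $p$-group, and comparing orders shows its non-central elements have order $p$ or $p^2$ (an element of order $p^3$ would have a $p$-th power that is non-central of the wrong order).

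The main obstacle is to show that exponent $p^2$ forces $p=2$. Writing $V=N/Z(N)$ and $Z=Z(N)$, I would study the two $\Aut(N)$-equivariant maps: the commutator form $\beta\colon V\times V\to Z$ (alternating, non-degenerate as $Z$ is the full centre, with image spanning $N'=Z$) and the $p$-power map $\theta\colon V\to Z$, $\overline{x}\mapsto x^p$. The central automorphisms $g\mapsto g\,f(\overline g)$ with $f\in\mathrm{Hom}(V,Z)$ act transitively on each coset of $Z$, so the two-orbit condition is equivalent to $\Aut(N)$ being transitive on $V\setminus\{0\}$ and on $Z\setminus\{0\}$; in exponent $p^2$ the map $\theta$ is then non-zero and equivariant, hence injective, and by irreducibility of $Z$ an isomorphism $V\xrightarrow{\sim}Z$. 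For $p$ odd the Hall--Petrescu identity makes $\theta$ an additive homomorphism completely decoupled from $\beta$, and I expect simultaneous transitivity on $V\setminus\{0\}$ and $Z\setminus\{0\}$ through an equivariant alternating product $\beta$ together with an equivariant linear isomorphism $\theta$ to be impossible — the odd-$p$ analogue of the non-existence of Suzuki groups in odd characteristic — and making this rigorous is the crux of the lemma. For $p=2$ no contradiction arises: $\theta$ becomes a quadratic map with polar form $\beta$, yielding exactly the special $2$-groups of exponent $4$ (case (iv)), while the surviving odd case has exponent $p$ and gives case (v) with $p\geqs 3$ (note that exponent $p=2$ would make $N$ abelian).
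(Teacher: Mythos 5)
Your overall architecture is sound and runs parallel to the paper's proof (orbit counting on $N\setminus\{1\}$, forcing $\Phi(N)=N'=Z(N)$ via characteristic subgroups, and excluding mixed primes), but there is a genuine gap at exactly the point you flag yourself: the exclusion of non-abelian special $p$-groups of exponent $p^2$ for odd $p$. Your reduction to the pair of $\Aut(N)$-equivariant maps --- the commutator form $\beta\colon V\times V\to Z$ and the $p$-th power map $\theta\colon V\to Z$, with $\theta$ a homomorphism for odd $p$ by the class-$2$ identity $(xy)^p=x^py^p[y,x]^{\binom{p}{2}}$, and $\theta$ an isomorphism by transitivity on $V\setminus\{0\}$ and $Z\setminus\{0\}$ --- is a correct setup, but ``I expect \dots to be impossible'' is not a proof, and the impossibility does not fall out of any soft decoupling of $\beta$ from $\theta$: this is the hard content of the lemma, as the existence of Suzuki $2$-groups shows that nothing purely formal about equivariant forms can rule it out. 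The paper closes precisely this case by quoting Shult \cite[Corollary 4]{S_aut}: in the exponent-$p^2$ configuration the elements of order $p$ are exactly $Z(N)\setminus\{1\}$, hence form a single $\Aut(N)$-orbit, and Shult's theorem on automorphic algebras then forces $N$ abelian for odd $p$, a contradiction. Without this (or an equivalent substitute, e.g.\ the theory of ``homogeneous'' $p$-groups), your case (v) versus (iv) dichotomy is unproven.

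Two smaller points. First, your specialness argument has a micro-gap when $\Phi(N)=1$: the three invariant sets $\Phi(N)\setminus\{1\}$, $Z(N)\setminus\Phi(N)$, $N\setminus Z(N)$ only yield three orbits if the first is non-empty. This is repairable (the divisor argument shows the common non-central order lies in $\{p,p^2,q\}$; the value $q$ dies because a central element of order $p$ commutes with a non-central element of order $q$ to give an element of order $pq$ --- which, incidentally, replaces your Schur--Zassenhaus step --- and then $N$ is a $p$-group, where $\Phi(N)=1$ would force $N$ elementary abelian), but it should be said. Second, in the $Z(N)=1$ case you outsource to the classification of groups with element orders $\{1,p,q\}$; this is legitimate but is the same kind of external appeal the paper makes, more directly, to Laffey--MacHale \cite[Theorem 2]{LM_aut} on $\{p,q\}$-groups with three automorphism orbits, so no ground is gained there either.
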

	\begin{proof}
		First note that if $\Aut(N)$ has exactly $2$ orbits on $N$, then $N$ is clearly an elementary abelian $p$-group for some prime $p$, as in~(i).
		Now assume that $\Aut(N)$ has exactly $3$ orbits on $N$.
		It is easy to see that $N$ is either a $p$-group or a $\{p,q\}$-group for primes $p$ and $q$.
		We remark that the $\{p,q\}$-groups with three automorphism orbits have been classified in~\cite[Theorem 2]{LM_aut}, and such a group $N$ is a Frobenius group as recorded in~(ii).
		
		To complete the proof, assume $N$ is a $p$-group.
		Since $\Aut(N)$ has at most $3$ orbits on $N$, the exponent of $N$ is at most $p^2$.
		If $N$ is abelian, we may write $N\cong\bbZ_{p^2}^k\times \bbZ_p^\ell$ for some $k\geqs 1$ and $\ell\geqs 0$. 
		Note that if $\ell\geqs 1$, then $N$ has at least two $\Aut(N)$-classes of elements of order $p$, and thus $N\cong \bbZ_{p^2}^k$ as in~(iii).
		
		Finally, assume $N$ is a non-abelian $p$-group and $\Aut(N)$ has exactly $3$ orbits on $N$. Then $N$ has a unique non-trivial proper characteristic subgroup.
		In particular, we have $\Phi(N)=N'=Z(N)$, where $\Phi(N)$ is the Frattini subgroup of $N$, and hence $N$ is a special $p$-group.
		If $p = 2$ then $N$ has exponent $4$, since a group of exponent $2$ is abelian. This is recorded as (iv).
		Suppose that $p$ is odd and $N$ has exponent $p^2$.
		Then $\Aut(N)$ acts transitively on the elements of order $p$.
		However, \cite[Corollary 4]{S_aut} implies that $N$ is abelian, which is a contradiction.
		Thus, $N$ is a special $p$-group of exponent $p$ when $p$ is odd, which gives (v).
	\end{proof}

As remarked in Section \ref{s:intro}, a much stronger result in \cite{Z_3orbits} completely classifies the finite groups $N$ with precisely $3$ orbits under $\Aut(N)$. To establish the results in this paper, Lemma \ref{l:regularnormal} is sufficient.

	To conclude this subsection, we record some existing results in the literature stepping towards a classification of rank $3$ imprimitive permutation groups. The following is \cite[Theorem 1]{DGLPP_rank3}.
	
	\begin{thm}
		\label{t:DGLPP_1}
		Assume Hypothesis \ref{hypo:imprim}, where $G_B^B$ is almost simple. Then $G$ has rank $3$ on $\Omega$ if and only if one of the following holds:
		\begin{itemize}\addtolength{\itemsep}{0.2\baselineskip}
			\item[{\rm (i)}] $\soc(Y)^n\leqs G$;
			\item[{\rm (ii)}] $G$ is quasiprimitive and rank $3$ on $\Omega$;
			\item[{\rm (iii)}] $n = 2$, $|B|=6$ and $G\in\{\Ma_{10},\PGL_2(9),\PGammaL_2(9)\}$;
			\item[{\rm (iv)}] $n = 2$, $|B| = 12$ and $G = \Ma_{12}.2$.
		\end{itemize}
	\end{thm}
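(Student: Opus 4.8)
The plan is to prove both implications, handling the forward direction (that the groups in (i)--(iv) have rank $3$) first, since it is comparatively routine. If $\soc(Y)^n\leqs G$ as in (i), then the factor $1\times\soc(Y)\times\cdots\times 1$ (trivial on $B$ and equal to $\soc(Y)$ on the block $B'$) lies in $K_{(B)}$ and is transitive on $B'$, since $\soc(Y)$ is transitive on $B$; hence $G$ has rank $3$ by Corollary \ref{c:KB_trans}. Case (ii) is immediate from the definition, and for the sporadic configurations in (iii) and (iv) I would verify rank $3$ directly, for instance by computing the relevant permutation character, or by a machine computation with the given almost simple group acting on $2|B|$ points.

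For the converse, assume $G$ has rank $3$ with $Y=G_B^B$ almost simple, and write $S=\soc(Y)$ and $K=G_{(\calB)}=G\cap Y^n$. If $K=1$ then every non-trivial normal subgroup of $G$ is transitive by Lemma \ref{l:norm_trans}, so $G$ is quasiprimitive and we are in case (ii); thus assume $K\neq 1$. Since $1\neq K^B\normeq Y$ and $Y$ is almost simple, we obtain $S\leqs K^B$, and by the $2$-transitivity of $X$ on $\calB$ the same holds on every block. The key object is $D:=K\cap S^n = G\cap S^n$, which is normal in $G$ because $S^n$ is characteristic in $Y^n$. A short argument shows each projection $\pi_i(D)$ is a normal subgroup of $S$, hence equals $1$ or $S$; and $\pi_i(D)=1$ for some (hence, by the transitivity of $X$, every) $i$ would force $K$ to embed into the solvable group $(Y/S)^n$, contradicting $S\leqs K^B$. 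Therefore $D$ is a non-trivial subdirect subgroup of $S^n$, so by the standard structure of subdirect products of a simple group, $D$ is a direct product of full diagonal subgroups indexed by a partition of $\{1,\dots,n\}$ into parts of equal size. This partition is $X$-invariant, and since $X$ is $2$-transitive, hence primitive, it is trivial: either $D=S^n$, giving $\soc(Y)^n\leqs G$ as in (i), or $D$ is a single full diagonal subgroup with $D\cong S$.

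It remains to analyse the diagonal case and show that it leads to (iii) or (iv). Here $D\cong S$ is transitive on each block (as $S$ is transitive on $B$), so the $D$-orbits are precisely the blocks. Since $S$ is a non-regular transitive subgroup of $\Sym(B)$ whose point stabiliser is self-normalising (as holds for the socle of a $2$-transitive almost simple group), its centraliser in $\Sym(B)$ is trivial; applying this on each block yields $C_G(D)=1$, so $G$ embeds into $\Aut(D)=\Aut(S)$ and is almost simple with socle $S$. Consequently $X=G^\calB$ is a quotient of $G/S\hookrightarrow\Out(S)$ and is therefore solvable; being also $2$-transitive, $X$ is affine, whence $n=p^a$ is a prime power and $X$ is isomorphic to a section of $\Out(S)$. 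I would then combine the smallness of $\Out(S)$ with the rank $3$ criterion of Lemma \ref{l:DGLPP:l:4.2}: condition (i) there requires $G_{B,B'}$ (which contains the twisted diagonal copy of $S$ acting on $B\times B'$) to be transitive on $B\times B'$, and condition (ii) requires $G_\beta$ to be transitive on the remaining blocks. The main obstacle is exactly this last step: running through the classification of the finite $2$-transitive almost simple groups $Y$, determining for each the admissible outer twists and the orbitals of the resulting product action on $B\times B'$, and checking these transitivity conditions. I expect this analysis to eliminate every $n>2$ and to leave precisely $n=2$ with $Y\in\{\Ma_{10},\PGL_2(9),\PGammaL_2(9)\}$ on $6$ points and $Y=\Ma_{12}.2$ on $12$ points, matching (iii) and (iv).
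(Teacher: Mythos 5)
You should first note that the paper contains no proof of this statement: it is quoted verbatim from \cite[Theorem 1]{DGLPP_rank3}, so the benchmark is the original proof there. Your skeleton is sound and is essentially the published route: reduce to $D:=G\cap \soc(Y)^n\normeq G$, show $D$ is subdirect in $S^n$ (the alternative forcing the insoluble $K^B\geqs S$ to be a quotient of a soluble group, via Schreier), apply Scott's lemma on subdirect subgroups of $S^n$, and use the primitivity of the $2$-transitive group $X$ to make the partition trivial, so that either $S^n\leqs G$ (case (i)) or $D$ is a full diagonal copy of $S$. One local repair is needed in the diagonal case: your derivation of $C_G(D)=1$ "by applying this on each block" tacitly assumes elements of $C_G(D)$ fix each block setwise, which is not automatic, since an element centralising $D$ may permute the $D$-orbits (which are exactly the blocks). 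This is fixable: $C:=C_G(D)\normeq G$, so by Lemma \ref{l:norm_trans} either $C\leqs K$, in which case the blockwise trivial-centraliser argument gives $C=C\cap K=1$, or $C$ is transitive on $\Omega$, in which case $D\leqs C_{\Sym(\Omega)}(C)$ would be semiregular, contradicting $S_\beta\ne 1$ (which you correctly established en route).

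The genuine gap is the final step, which you explicitly defer ("I expect this analysis to eliminate every $n>2$\dots"): this is precisely where the entire content of cases (iii) and (iv) lives, and without it the "only if" direction is unproven. Concretely, in the diagonal case one must exploit that $G_{B,B'}$ is transitive on $B\times B'$ while $D\leqs G_{B,B'}$ has index at most $|\Out(S)|$ in it; hence the $D$-orbits on $B\times B'$, which correspond to the double cosets $S_\beta\backslash S/S_{\beta'}^{\a}$ for the twisting automorphism $\a$, number at most $|\Out(S)|$. This forces a (near-)factorisation of $S$ by two point stabilisers of $2$-transitive actions, and one must classify these: running through Table \ref{tab:2-trans_as} one finds only $S=A_6$ with $A_5\cap A_5^{\a}\cong D_{10}$ and $S=\Ma_{12}$ with $\Ma_{11}\cap \Ma_{11}^{\a}\cong\PSL_2(11)$. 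Only then does the constraint that the $2$-transitive group $X$ of degree $n$ is a quotient of $G/D\leqs\Out(S)$, with $|\Out(A_6)|=4$ and $|\Out(\Ma_{12})|=2$, force $n=2$; and one must still check both conditions of Lemma \ref{l:DGLPP:l:4.2} to see which extensions of the diagonal $S$ actually arise (for $S=A_6$ the block-swapping outer elements select $\Ma_{10}$, $\PGL_2(9)$ and $\PGammaL_2(9)$ but exclude $S_6$). None of this is routine bookkeeping that can be waved through: it is the heart of the theorem, so as written your proposal establishes the reduction but not the classification.
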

	
	Note that $K = 1$ if and only if $G$ is quasiprimitive, and the rank $3$ quasiprimitive imprimitive groups are recorded in \cite[Table 1]{DGLPP_rank3} by \cite[Theorem 2]{DGLPP_rank3}. A broader class of permutation groups, so-called \textit{innately transitive} groups, consists of the groups with a transitive minimal normal subgroup. Recently, the rank $3$ innately transitive non-quasiprimitive groups are determined as in \cite[Table 3]{BDP_innately} by \cite[Theorem C]{BDP_innately}.

	\begin{thm}
		\label{t:innately}
		Let $G\leqs\Sym(\Omega)$ be an innately transitive imprimitive permutation group. Then $G$ has rank $3$ if and only if $G$ is one of the groups recorded in \cite[Table 3]{BDP_innately} or \cite[Table 1]{DGLPP_rank3}.
	\end{thm}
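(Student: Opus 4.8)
The plan is to reduce the statement to the two classifications already established in the literature, splitting the analysis according to whether or not $G$ is quasiprimitive. First I would record the underlying dichotomy. Since $G$ is innately transitive it possesses a transitive minimal normal subgroup, and every quasiprimitive group is in particular innately transitive; moreover, by the remark following Lemma \ref{l:norm_trans}, the group $G$ is quasiprimitive precisely when $K = G_{(\calB)} = 1$. Thus $G$ is either quasiprimitive (equivalently $K = 1$) or innately transitive but not quasiprimitive (equivalently $K \ne 1$), and these two alternatives are mutually exclusive and exhaustive.

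For the forward implication, suppose $G$ has rank $3$. If $G$ is quasiprimitive, then it is a rank $3$ quasiprimitive imprimitive group, so \cite[Theorem 2]{DGLPP_rank3} places $G$ in \cite[Table 1]{DGLPP_rank3}. If instead $G$ is not quasiprimitive, then it is a rank $3$ innately transitive non-quasiprimitive imprimitive group, whence \cite[Theorem C]{BDP_innately} places $G$ in \cite[Table 3]{BDP_innately}. In either case $G$ is recorded in one of the two tables. For the converse, each entry of the two tables is innately transitive, imprimitive and of rank $3$, as verified in the respective references, so every listed group satisfies the conclusion.

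Since the substantive work has already been carried out in \cite{DGLPP_rank3} and \cite{BDP_innately}, the main (if modest) obstacle is a compatibility check rather than any new computation: one must confirm that the hypotheses of innate transitivity and imprimitivity place $G$ squarely within the scope of exactly one of the two cited theorems, so that the union of \cite[Table 1]{DGLPP_rank3} and \cite[Table 3]{BDP_innately} captures precisely the present class, with no gaps between their conclusions and no spurious overlap. I expect this bookkeeping — matching the two scopes to the single hypothesis stated here — to be the only genuine point requiring care.
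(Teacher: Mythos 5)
Your proposal matches the paper's treatment exactly: the paper justifies Theorem \ref{t:innately} by the same dichotomy, noting that $G$ is quasiprimitive precisely when $K = G_{(\calB)} = 1$ and then citing \cite[Theorem 2]{DGLPP_rank3} for the quasiprimitive case and \cite[Theorem C]{BDP_innately} for the innately transitive non-quasiprimitive case. Your closing ``compatibility check'' is precisely the observation the paper makes in the surrounding discussion, so nothing is missing.
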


	\subsection{$2$-transitive permutation groups}
	
	\label{ss:2-trans}
	
	As discussed above, any permutation group of rank $3$ is permutation isomorphic to a group $G$ satisfying Hypothesis \ref{hypo:imprim}, so the classification of the finite $2$-transitive permutation groups, which has been done in \cite{He_2-trans,Hu_2-trans}, plays a key role in the study of imprimitive groups of rank $3$. Observe that a $2$-transitive permutation group is either almost simple or affine (this observation dates back to Burnside \cite{B_finite_gp}), and its classification relies on the classification of finite simple groups. Here we record the classification of $2$-transitive permutation groups as follows.
	
	\begin{thm}
		\label{t:2-trans_as}
		Let $G$ be an almost simple permutation group with socle $G_0$ and point stabiliser $H$. Then $G$ is $2$-transitive if and only if $G$ is one of the groups listed in Table \ref{tab:2-trans_as}.
	\end{thm}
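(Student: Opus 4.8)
The plan is to deduce the result from the classification of finite simple groups (CFSG), following the classical determination of the $2$-transitive groups recorded in \cite{He_2-trans,Hu_2-trans}. The starting observation is that $G$ is $2$-transitive precisely when it is primitive of rank $2$; thus $H$ must be maximal in $G$ and must act transitively on $\Omega\setminus\{\alpha\}$, where $\alpha$ is the point fixed by $H$. Equivalently, $G=H\sqcup HgH$ is a union of exactly two $(H,H)$-double cosets, or the permutation character $1_H^G$ satisfies $\langle 1_H^G,1_H^G\rangle=2$. The backward implication is the routine one: for each candidate pair $(G,H)$ in Table~\ref{tab:2-trans_as} one verifies $2$-transitivity directly, either by exhibiting the transitive action of $H$ on the remaining points or by the double-coset/character criterion above.

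For the forward implication, which carries the content, I would argue as follows. By Burnside's dichotomy a $2$-transitive group is affine or almost simple, so here $G_0=\soc(G)$ is a nonabelian simple group with $G_0\leqs G\leqs\Aut(G_0)$, and $G_0$ is transitive on $\Omega$ (though not necessarily $2$-transitive, as the example of $G_0=\PSL_2(8)$ inside a $2$-transitive overgroup on $28$ points shows). Invoking CFSG, $G_0$ lies in one of four families: alternating, classical, exceptional of Lie type, or sporadic. For each admissible socle one runs through the subgroups $H$ with $G_0H=G$ and $H$ maximal in $G$, and retains exactly those for which $H$ has two orbits on $\Omega=[G:H]$.

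The case division then proceeds family by family. For $G_0=A_m$ one recovers the natural $m$-point action together with the small exceptional actions (for example the actions of $A_5,A_6,A_7,A_8$ on projective or other small sets). For the classical groups the principal examples are $\PSL_d(q)$ on the points or hyperplanes of $\PG_{d-1}(q)$, together with $\PSU_3(q)$, $\Sp_{2d}(2)$ and a short list of exceptional actions; here one uses Aschbacher's theorem on the subgroup structure of classical groups to bound the maximal subgroups $H$ and then eliminates all but the geometric actions by computing orbit numbers. For the exceptional groups the only infinite families are the Suzuki groups ${}^2B_2(q)$ and the Ree groups ${}^2G_2(q)$ on their associated Steiner systems, and for the sporadic groups one reads off the $2$-transitive representations from the known maximal-subgroup and permutation-character data.

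The main obstacle is the forward direction for the classical groups: Aschbacher's theorem leaves a large collection of candidate maximal subgroups, and within each geometric type (reducible stabilisers, imprimitive stabilisers, field-extension and subfield subgroups, classical subgroups, and the almost simple class $\mathcal{S}$) one must determine the exact number of $H$-orbits, which is a delicate rank computation sensitive to small values of $d$ and $q$. The sporadic groups, by contrast, are handled by a finite if tedious inspection of character tables. Once every family is dealt with, assembling the surviving pairs $(G,H)$ and matching them against Table~\ref{tab:2-trans_as} completes the proof.
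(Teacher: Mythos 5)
The paper contains no proof of this theorem: it is recorded as a known result, with the classification of the finite $2$-transitive groups attributed to the literature (the citations \cite{He_2-trans,Hu_2-trans} appear just before the statement), and the table is simply quoted. So there is no in-paper argument for your proposal to diverge from; the relevant comparison is with the literature proof that the paper imports, and measured against that your outline follows the standard route: the rank-$2$/double-coset/permutation-character reformulation, Burnside's affine--almost-simple dichotomy, and a CFSG-driven case division over alternating, classical, exceptional and sporadic socles, with the backward implication checked directly. All of this is correctly set up, including the observation that $G_0$ need not itself be $2$-transitive (your $\PSL_2(8)$ on $28$ points example is exactly the $G_0.3$ row of the table).

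Two caveats are worth making explicit. First, as written your text is a programme rather than a proof: the forward direction for the groups of Lie type is the content of substantial papers (for the Lie-type case this is Curtis--Kantor--Seitz, who work with the $BN$-pair and parabolic structure rather than with Aschbacher's theorem, which postdates their argument in this role), and compressing it to ``bound the maximal subgroups via Aschbacher and compute orbit numbers'' conceals essentially all of the difficulty; a referee would require either the full case analysis or explicit citations at each family. Second, a point of attribution: the references the paper itself gives, Hering and Huppert, actually treat the affine side (transitive linear groups and soluble $2$-transitive groups), so the almost simple classification you are reconstructing rests on other sources; if you turn your sketch into a proof by citation, those are the works that must carry the classical, exceptional and sporadic cases. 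Neither caveat is an error in your plan, but without the supporting citations the middle portion of your argument is a gap rather than a proof.
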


	\begin{table}[ht]
		\[
		\begin{array}{llll} \hline
		G_0 &  H\cap G_0 & m(G_0) & \mbox{Conditions}
		\\ \hline
		A_m & A_{m-1} & \bbZ_2 & m\ne 6,7\\
		&  & \bbZ_6 & \mbox{$m=6,7$, $G\leqs S_m$}\\
		\PSL_d(q) & [q^{d-1}]{:}\GL_{d-1}(q)/(d,q-1) & \mbox{see Remark \ref{r:tab_as}} & G\leqs \PGammaL_d(q)\\
		\Sp_{2d}(2) & \mathrm{SO}_{2d}^+(2) & 1 & d\geqs 4\\
		&  & \bbZ_2 & d = 3\\
		\Sp_{2d}(2) & \mathrm{SO}_{2d}^-(2) & 1 & d\geqs 4\\
		&  & \bbZ_2 & d = 3\\
		\PSU_3(q) & [q^3]{:}\bbZ_{(q^2-1)/(3,q+1)} & \bbZ_{(3,q+1)} & q\geqs 3\\
		{^2}B_2(q) & [q^2]{:}\bbZ_{q-1} & 1 & q = 2^{2d+1}>8\\
		&  & \bbZ_2^2 & q = 8\\
		{^2}G_2(q) & [q^3]{:}\bbZ_{q-1} & 1 & q = 3^{2d+1}>3\\
		A_7 & \PSL_2(7) & \bbZ_6 & G = G_0\\
		\PSL_2(8) & D_{18} & 1 &G = G_0.3\\
		\PSL_2(11) & A_5 & \bbZ_2 &G = G_0 \\
		\mathrm{M}_{11} & \mathrm{M}_{10} & 1 &\\
		\mathrm{M}_{11} & \PSL_2(11) & 1 &\\
		\mathrm{M}_{12} & \mathrm{M}_{11} & \bbZ_2 & G = G_0 \\
		\mathrm{M}_{22} & \PSL_3(4) & \bbZ_{12} & \\
		\mathrm{M}_{23} & \mathrm{M}_{22} & 1 & \\
		\mathrm{M}_{24} & \mathrm{M}_{23} & 1 & \\
		\mathrm{HS} & \PSU_3(5).2 & \bbZ_2 & G = G_0\\
		\mathrm{Co}_3 & \mathrm{McL}.2 & 1 &
		\\ \hline
		\end{array}
		\]
		\caption{Almost simple $2$-transitive groups}
		\label{tab:2-trans_as}
	\end{table}
	
	\begin{rem}
		\label{r:tab_as}
		In the third column of Table \ref{tab:2-trans_as}, we record the Schur multiplier $m(G_0)$ of the simple group $G_0$ (recall that a group $\widetilde{G_0}$ is called a \textit{covering group} of $G_0$ if $Z(\widetilde{G_0})\leqs \widetilde{G_0}'$ and $\widetilde{G_0}/Z(\widetilde{G_0}) \cong G_0$, and the \textit{Schur multiplier} of $G_0$ is the centre of its (unique) maximal covering group). This variant will be used later in Section \ref{s:semi_proof}. When $G_0 = \PSL_d(q)$, we have $m(G_0) = \bbZ_{(d,q-1)}$ unless $(d,q) = (2,4),(3,2),(4,2)$ and $m(G_0) = \bbZ_2$, or $(d,q,m(G_0)) = (2,9,\bbZ_6)$ or $(3,4,3\times \bbZ_4^2)$.
	\end{rem}
	
	Note that if $G = V{:}T\leqs \mathrm{AGL}_d(p)$ is affine, where $V = \bbZ_p^d$ is elementary abelian, then $G$ is $2$-transitive if and only if $T\leqs\GL_d(p)$ is a transitive linear group (that is, $T$ acts transitively on the non-zero vectors of $\mathbb{F}_p^d$).

	\begin{thm}
		\label{t:2-trans_affine}
		Let $T\leqs\GL_d(p)$ be a transitive linear group. Then $T$ is one of the groups listed in Table \ref{tab:2-trans_affine}.
	\end{thm}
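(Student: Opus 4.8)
The statement is a classification result: every transitive linear group $T \leqs \GL_d(p)$ appears in a finite list (Table \ref{tab:2-trans_affine}). This is the classical Hering theorem, so the plan is to invoke and organise the known classification rather than to derive it from scratch. The starting observation is that $T$ acting transitively on the $p^d - 1$ nonzero vectors of $\mathbb{F}_p^d$ forces strong divisibility and structural constraints: in particular $p^d - 1$ divides $|T|$, and the natural module $V = \mathbb{F}_p^d$ is a transitive (hence quasi-primitive in a suitable sense) $T$-module. The first step is therefore to record these numerical and module-theoretic consequences and to reduce to the analysis of a minimal normal subgroup or a suitable characteristic subgroup of $T$.

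The main line of argument proceeds via the structure of the component (or layer) and the generalised Fitting subgroup $F^*(T)$. One splits into cases according to whether $T$ is solvable or not, and according to the action on $V$. First I would treat the case where $T$ normalises a Singer cycle or a field-extension structure: here $V$ is viewed as $\mathbb{F}_{p^e}^{d/e}$ for some $e \mid d$, and $T \leqs \GammaL_{d/e}(p^e)$, yielding the infinite families (the subgroups of $\GammaL_1(p^d)$ and the classical-type linear, symplectic and $G_2$ examples $\SL_a(q) \trianglelefteqslant T$, $\Sp_a(q) \trianglelefteqslant T$, $G_2(q)' \trianglelefteqslant T$ acting transitively on nonzero vectors). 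The transitivity on nonzero vectors pins down exactly which $q$ and $a$ occur, since these classical groups are transitive on $\mathbb{F}_q^a \setminus \{0\}$ essentially because of their flag-transitivity. Second, one isolates the finitely many exceptional (extraspecial-normaliser and sporadic) cases, where $T$ normalises an extraspecial or almost-extraspecial $2$-group, or $\soc(T/Z)$ is one of $A_5, A_6, A_7, \PSL_2(13), \SL_2(q)$ for small $q$, et cetera; these give the finite list of sporadic transitive linear groups in dimensions such as $d=2,4,6$.

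The technical heart — and the step I expect to be the main obstacle — is verifying that the list is \emph{complete}, i.e.\ that no transitive linear group falls outside these families. This is precisely the content of Hering's theorem, whose proof rests on the classification of finite simple groups together with detailed work on the possible irreducible subgroups of $\GL_d(p)$ that can be transitive on nonzero vectors (bounding the socle of $T/(T \cap Z(\GL_d(p)))$ and ruling out all other Lie-type and alternating possibilities by order and fixed-point-ratio arguments). Since this classification is already established in the literature, the proof here reduces to citing Hering's theorem and then matching the known output against Table \ref{tab:2-trans_affine}, checking in each case that the transitivity condition on $\mathbb{F}_p^d \setminus \{0\}$ holds and that the parameters $(d,p)$ and the group $T$ are recorded correctly. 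Thus the ``proof'' is essentially a reference-and-verification argument: invoke the classification of $2$-transitive affine groups (equivalently, of transitive linear groups), and confirm that the tabulated families and exceptions exhaust all cases.
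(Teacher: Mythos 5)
Your proposal is correct and matches the paper's treatment: the paper gives no proof of this theorem at all, stating it as the known classification of transitive linear groups and citing Huppert \cite{Hu_2-trans} (soluble case) and Hering \cite{He_2-trans} (insoluble case, via CFSG) in the surrounding discussion. Your reference-and-verification argument, including the structural sketch of how the classification is organised, is exactly the intended justification.
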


	\begin{table}[ht]
		\[
		\begin{array}{lll} \hline
		p^d & T & \mbox{Condition}
		\\ \hline
		q^a & \SL_a(q)\normeq T \leqs \mathrm{\Gamma L}_a(q)& \\
		q^{2a} & \Sp_{2a}(q)'\normeq T &a\geqs 2\\
		q^6 & G_2(q)'\normeq T & \mbox{$p = 2$}\\
		p^2 & \SL_2(3)\normeq T\leqs \SL_2(3).\bbZ_{p-1} & p = 5,7,11,23\\
		p^2 & \SL_2(5)\normeq T\leqs \SL_2(5).\bbZ_{(p-1)/2} & p = 11,19,29,59\\
		2^4 & A_7 &\\
		3^4 & \SL_2(5)\normeq T \leqs \SL_2(5).D_8&\\
		3^4 & 2^{1+4}_-\normeq T \leqs 2_-^{1+4}.S_5&\\
		3^6 & \SL_2(13) & 
		\\ \hline
		\end{array}
		\]
		\caption{Transitive linear groups}
		\label{tab:2-trans_affine}
	\end{table}

	\begin{cor}
		\label{c:trans_linear_prime-power}
		Suppose $T\leqs\GL_d(p)$ is a transitive linear group. If $p^d$ divides $|T|$, then one of the following holds:
		\begin{itemize}\addtolength{\itemsep}{0.2\baselineskip}
			\item[{\rm (i)}] $\SL_a(p^f)\normeq T\leqs \mathrm{\Gamma L}_a(p^f)$ for $a\geqs 3$ and $d = af$;
			\item[{\rm (ii)}] $\Sp_{2a}(p^f)'\normeq T$ for $a\geqs 2$ and $d = 2af$;
			\item[{\rm (iii)}] $G_2(2^f)'\normeq T$ and $(p,d) = (2,6f)$.
		\end{itemize}
	\end{cor}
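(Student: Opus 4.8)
The plan is to read the conclusion off directly from the classification of transitive linear groups in Theorem~\ref{t:2-trans_affine}. Writing $q = p^f$, observe first that the hypothesis $p^d \mid |T|$ is equivalent to the inequality $(|T|)_p \geqs p^d$ on the $p$-part of $|T|$, so the entire argument is a comparison of $p$-parts carried out row by row in Table~\ref{tab:2-trans_affine}. The crucial point is that the three infinite families in that table are essentially the three conclusions: the row $\Sp_{2a}(q)' \normeq T$ with $a \geqs 2$ (where $p^d = q^{2a}$, so $d = 2af$) is literally conclusion (ii), the row $G_2(q)' \normeq T$ with $p = 2$ (where $p^d = q^6$, so $d = 6f$) is literally conclusion (iii), and the row $\SL_a(q) \normeq T \leqs \GammaL_a(q)$ (where $p^d = q^a$, so $d = af$) gives conclusion (i) as soon as $a \geqs 3$. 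So it suffices to prove that the remaining transitive linear groups --- namely this last row with $a \leqs 2$, together with the six sporadic rows of Table~\ref{tab:2-trans_affine} --- all fail the hypothesis $p^d \mid |T|$.

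For the $\SL_a(q)$ row with $a \leqs 2$, I would bound the $p$-part of the largest possible candidate $T = \GammaL_a(q)$. Since each $q^i - 1$ is coprime to $p$, one has $(|\GammaL_a(q)|)_p = p^{fa(a-1)/2}(f)_p$, and the elementary bound $(f)_p \leqs f < p^f$ gives
\[
(|T|)_p \leqs (|\GammaL_a(q)|)_p < p^{fa(a-1)/2 + f} \leqs p^{af} = p^d,
\]
where the last inequality uses $a(a-1)/2 + 1 \leqs a$, valid precisely when $a \leqs 2$. Hence $p^d \nmid |T|$ in this range, confirming that only $a \geqs 3$ survives (and that for $a \geqs 3$ the containment $\SL_a(q) \leqs T$ already forces $p^{fa(a-1)/2} \geqs p^{af} = p^d$, consistent with conclusion (i)).

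It then remains to dispatch the six sporadic rows by a direct order computation. For the two rows of dimension $p^2$ --- with $\SL_2(3) \normeq T \leqs \SL_2(3).\bbZ_{p-1}$ and $p \in \{5,7,11,23\}$, and with $\SL_2(5) \normeq T \leqs \SL_2(5).\bbZ_{(p-1)/2}$ and $p \in \{11,19,29,59\}$ --- the prime $p$ divides none of $|\SL_2(3)| = 24$, $|\SL_2(5)| = 120$, or $p-1$, so $p \nmid |T|$ and a fortiori $p^2 \nmid |T|$. For the remaining four rows one simply checks a fixed prime power against a fixed order: $(|A_7|)_2 = 2^3 < 2^4$, while the $3$-parts of $|\SL_2(5).D_8| = 960$, $|2^{1+4}_-.S_5| = 3840$ and $|\SL_2(13)| = 2184$ are all equal to $3$, which is smaller than $3^4$, $3^4$ and $3^6$ respectively. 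Thus $p^d \nmid |T|$ throughout, completing the reduction to (i), (ii), (iii).

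The only genuinely delicate step is the exclusion of the $\SL_a(q)$ row for $a \leqs 2$: here I must bound the $p$-part of the full group $\GammaL_a(q)$ rather than of $\SL_a(q)$, because $T$ may contain field automorphisms whose order $f$ can be divisible by $p$, and the argument then hinges on the inequality $(f)_p \leqs f < p^f$. Everything else is a routine comparison of prime powers.
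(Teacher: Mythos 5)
Your proof is correct and is exactly the verification the paper leaves implicit: the corollary is stated without proof as an immediate consequence of Theorem \ref{t:2-trans_affine}, and your row-by-row comparison of $p$-parts in Table \ref{tab:2-trans_affine} --- the bound $(|T|)_p \leqs (|\GammaL_a(q)|)_p = p^{fa(a-1)/2}(f)_p < p^{af}$ for $a \leqs 2$ (which correctly covers all $T \leqs \GammaL_1(q)$ as well, since you bound by the full group order), together with the order checks for the six sporadic rows --- is precisely the routine inspection the authors intend. No gap; nothing further is needed.
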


	In the remainder of this section, we will give some properties of transitive linear groups, which will be useful in the proof of Theorem \ref{thm:semi}. To start with, let us record a theorem of Guralnick \cite{G_prime-power}, which determines the subgroups of finite simple groups of prime-power index.
	
	\begin{thm}
		\label{t:prime-power}
		Let $G$ be a non-abelian finite simple group and let $r$ be a prime number. If $H < G$ and $|G:H| = r^m$ for some positive integer $m$, then one of the following holds:
		\begin{itemize}\addtolength{\itemsep}{0.2\baselineskip}
			\item[{\rm (i)}] $G = A_n$ and $H \cong A_{n-1}$ with $n = r^m$;
			\item[{\rm (ii)}] $G = \PSL_n(q)$, $H$ is of type $P_1$, and $|G:H| = (q^n-1)/(q-1) = r^m$;
			\item[{\rm (iii)}] $(G,H) = (\PSL_2(11),A_5)$, $(\Ma_{23},\Ma_{22})$ or $(\Ma_{11},\Ma_{10})$;
			\item[{\rm (iv)}] $G = \PSU_4(2)\cong\PSp_4(3)$ and $H$ is a subgroup of index $27$.
		\end{itemize}
	\end{thm}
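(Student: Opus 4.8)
The plan is to reduce to the case that $H$ is a maximal subgroup of $G$ and then to invoke the classification of finite simple groups, treating the alternating, sporadic and Lie type families in turn. The reduction is immediate from the observation that $r$-power index is inherited by overgroups: if $H < M < G$, then $|G:M|$ divides $|G:H| = r^m$ and so is itself a power of $r$. It therefore suffices to locate the maximal subgroups of $r$-power index, and then to check in each surviving family that no proper subgroup of such a maximal subgroup yields a genuinely new example. For the groups in (i)--(iv) this last step is routine: in the alternating case it would force two consecutive powers of $r$, while in the Lie type case it is excluded by the detailed structure of the relevant maximal subgroups.

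For the alternating groups $A_n$ with $n \geqs 5$ one argues from the minimal indices of proper subgroups. The point stabiliser $A_{n-1}$ has index $n$, which yields (i) exactly when $n = r^m$; the next smallest index, that of a $2$-subset stabiliser, equals $n(n-1)/2$ and is never a prime power for $n \geqs 5$ because $n$ and $n-1$ are coprime and each exceeds $1$. Bounding the indices of the remaining maximal subgroups away from prime powers, together with a direct treatment of the small groups $A_5, A_6, A_7, A_8$ through their exceptional isomorphisms with groups of Lie type, disposes of this family. The sporadic groups require only a finite inspection of their maximal subgroups (for example via the $\mathrm{ATLAS}$), and this leaves exactly the Mathieu examples $(\Ma_{23},\Ma_{22})$ and $(\Ma_{11},\Ma_{10})$ of (iii).

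The heart of the argument is the groups of Lie type $G$ in defining characteristic $p$, and here it is natural to separate $r \neq p$ from $r = p$. When $r \neq p$ the index $|G:H| = r^m$ is coprime to $p$, so $H$ contains a full Sylow $p$-subgroup of $G$; by the Borel--Tits theorem $H$ then lies in a parabolic subgroup, and since $H$ is maximal it is itself a maximal parabolic. For $\PSL_n(q)$ the stabiliser of a $k$-space has index equal to the number of $k$-dimensional subspaces of $\F^n$, and a primitive prime divisor argument shows that this is a prime power only for $k = 1$ (or, dually, $k = n-1$), giving the index $(q^n-1)/(q-1)$ of case (ii). For the remaining classical and exceptional families the maximal parabolic indices factor as products of at least two distinct primes, so no further examples arise.

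Finally, when $r = p$ the degree $p^m = |G:H|$ must be at least the minimal faithful permutation degree of $G$. Comparing $p^m$ with the known minimal degrees, which for groups of Lie type are realised by parabolic actions and are coprime to $p$ outside a few small cases, confines the possibilities to a short list; among these the only genuine examples are $\PSp_4(3) \cong \PSU_4(2)$ with its representation of degree $27 = 3^3$, giving (iv), and $\PSL_2(11)$ acting on the cosets of $A_5 \cong \PSL_2(5)$ with degree $11$, which completes (iii). I expect the main obstacle to be precisely this Lie type analysis: while the Borel--Tits reduction makes the case $r \neq p$ clean, the case $r = p$ and the isolation of the genuinely exceptional behaviour (the index-$27$ subgroup of $\PSp_4(3)$, and the defining-characteristic action of $\PSL_2(11)$) demand careful use of the minimal-degree data and the detailed subgroup structure of the small groups involved.
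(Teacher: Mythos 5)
You should first note that the paper does not prove this statement at all: it is Guralnick's theorem, quoted verbatim from \cite{G_prime-power} and used as a black box (e.g.\ in Lemma \ref{l:trans_linear_prime-power_index} and in Section \ref{s:semi_proof}). So the comparison here is with Guralnick's original argument, and your sketch does reconstruct its overall architecture faithfully: the multiplicativity of index reduces to maximal subgroups of $r$-power index, CFSG splits the analysis into alternating, sporadic and Lie type families, the coprimality argument plus Borel--Tits handles $r\ne p$ by forcing $H$ into a parabolic (this is exactly how the $\PSU_4(2)\cong\PSp_4(3)$ example of (iv) is most naturally found, since the index-$27$ subgroup is the point stabiliser $P_1$ of $\PSU_4(2)$ in characteristic $2$, with $r=3\ne p$ --- your placement of it in the $r=p$ branch via $\PSp_4(3)$ is legitimate but is the harder route), and the consecutive-prime-powers observation correctly disposes of subgroups of $A_{n-1}$ in the alternating case.

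There are two places where your sketch is thinner than what is actually needed. First, the descent from maximal to arbitrary subgroups in case (ii) is not ``routine'': if $H<P_1$ with $|P_1:H|=r^t>1$, one must observe that $r\nmid q$ (since $(q^n-1)/(q-1)\equiv 1 \pmod p$), so $H$ contains a Sylow $p$-subgroup of $P_1$ and hence the normal unipotent radical, whence $H$ projects to an $r$-power-index subgroup of the Levi factor; inductively this forces two cyclotomic quantities such as $(q^n-1)/(q-1)$ and $(q^{n-1}-1)/(q-1)$ to be powers of the same prime $r$, which a primitive prime divisor argument excludes. Note also that $m>1$ genuinely occurs in (ii), e.g.\ $(3^5-1)/2 = 11^2$ for $\PSL_5(3)$, so one cannot shortcut this by assuming $m=1$. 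Second, and more seriously, your $r=p$ branch as stated does not close: the constraint ``$p^m$ is at least the minimal faithful degree'' only bounds $p^m$ from below, while $p^m\leqs |G|_p$ bounds it from above, and for $\PSL_n(q)$ with $n\geqs 3$ the $p$-part $q^{n(n-1)/2}$ vastly exceeds the minimal degree $(q^n-1)/(q-1)$, so the comparison eliminates nothing there. The standard repair, and the one Guralnick uses, is to exploit the dual consequence of $p$-power index: $H$ contains a full Sylow $\ell$-subgroup of $G$ for \emph{every} prime $\ell\ne p$, in particular for a Zsigmondy primitive prime divisor of $q^n-1$, which forces $H$ to act irreducibly and confines it to a very short list of overgroups; the minimal-degree data then finishes only the residual small cases ($\PSL_2(p)$ for $p\in\{5,7,11\}$, $\LL_4(2)\cong A_8$, $\PSp_4(3)$). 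You flagged this branch as the main obstacle, which is the right instinct, but the tool you propose for it is insufficient on its own.
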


	\begin{lem}
		\label{l:trans_linear_prime-power_index}
		Suppose $T\leqs\GL_d(p)$ is a transitive linear group with a subgroup of index $p^d$. Then $(d,p) = (3,2)$ and $T = \GL_3(2)$.
	\end{lem}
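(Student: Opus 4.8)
The plan is to combine the classification of transitive linear groups with Guralnick's theorem on subgroups of prime-power index. Since $U\leqs T$ has index $p^d$, we have $p^d\mid|T|$, so Corollary \ref{c:trans_linear_prime-power} applies and, up to the three possibilities listed there, $T$ has a normal subgroup $L$ equal to one of $\SL_a(p^f)$ (with $a\geqs 3$, $d=af$), $\Sp_{2a}(p^f)'$ (with $a\geqs 2$, $d=2af$), or $G_2(2^f)'$ (with $p=2$, $d=6f$). I would write $\bar L$ for the non-abelian simple image of $L$ in $\PGL_d(p)$, namely $\PSL_a(p^f)$, $\PSp_{2a}(p^f)$ or $G_2(2^f)$, allowing the small exceptions $\Sp_4(2)'\cong A_6$ and $G_2(2)'\cong\PSU_3(3)$.

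The key reduction is that $|L:U\cap L|=|LU:U|$ divides $|T:U|=p^d$, so $U\cap L$ has $p$-power index in $L$. A short order estimate rules out $L\leqs U$: in each family $|T:L|$ divides the outer part of the relevant normaliser (for the linear case it divides $|\GammaL_a(q):\SL_a(q)|=(q-1)f$), whose $p$-part is at most $(f)_p$, far smaller than $p^d$; hence $L\not\leqs U$. Since $|Z(L)|$ is coprime to $p$, it follows that the image $\bar U$ of $U\cap L$ in $\bar L$ is a \emph{proper} subgroup of $p$-power index $>1$, and I would then apply Theorem \ref{t:prime-power} to $\bar L$ with $r=p$.

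The observation driving the whole argument is that when $\bar L$ is a simple group of Lie type in its defining characteristic $p$, the only entry of Guralnick's list it matches honestly is the parabolic $P_1$ in case (ii), of index $(p^{fn}-1)/(p^f-1)\equiv 1\pmod p$, which is coprime to $p$ and so can never be a nontrivial power of $p$. Thus a proper subgroup of $p$-power index can arise only through an exceptional isomorphism carrying $\bar L$ into Guralnick's list relative to a different characteristic or to an alternating group. Sieving the exceptional isomorphisms of the three families leaves exactly three near-misses: $\PSL_3(2)\cong\PSL_2(7)$ (Borel, index $8=2^3$), $\PSL_4(2)\cong A_8$ ($A_7$, index $8=2^3$), and $\PSp_4(3)\cong\PSU_4(2)$ (index $27=3^3$); every other group in the three families, and in particular all $G_2(2^f)$ and $\PSU_3(3)=G_2(2)'$, has no proper subgroup of $p$-power index.

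It remains to test these three candidates against the requirement that $U$ itself, not merely $\bar U$, have index exactly $p^d$. For $\PSL_3(2)$ we have $a=3$, $p=2$, $f=1$, so $d=3$ and $T=\SL_3(2)=\GammaL_3(2)=\GL_3(2)$, and $\bar U$ has index $8=p^d$, which is precisely the asserted conclusion. For $\PSL_4(2)\cong A_8$ we get $d=4$ and $T=\GL_4(2)=L$, so $U=U\cap L$ has index $8$, contradicting $|T:U|=2^4=16$. For $\PSp_4(3)$ we get $d=4$ with $|T:L|$ coprime to $3$ while $U\cap L$ has index $27$; comparing the $3$-parts in $|T:U|\,|U:U\cap L|=|T:L|\,|L:U\cap L|$ forces $3^4=3^3$, a contradiction. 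Hence only $(d,p)=(3,2)$ with $T=\GL_3(2)$ survives. I expect the main obstacle to lie exactly in this final bookkeeping: correctly selecting which exceptional isomorphisms yield a power of $p$ (rather than of some other prime), and then carrying out the index reconciliation that separates the genuine example $\GL_3(2)$ from the two near-misses $A_8\cong\PSL_4(2)$ and $\PSp_4(3)\cong\PSU_4(2)$, which Guralnick's theorem alone does not eliminate.
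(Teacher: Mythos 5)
Your proposal is correct and follows essentially the same route as the paper: reduce via Corollary \ref{c:trans_linear_prime-power}, pass to the simple quotient of the quasisimple normal subgroup (using that $p$ is coprime to the centre and that the $p$-part of $|T:T^\infty|$ is too small), and apply Theorem \ref{t:prime-power} to arrive at exactly the same three candidates $(\PSL_3(2),2^3)$, $(\PSL_4(2),2^3)$ and $(\PSp_4(3),3^3)$. The only divergence is the endgame: the paper eliminates the last two candidates with a {\sc Magma} computation, whereas you do it by hand via the index bookkeeping $|T:U|\,|U:U\cap L|=|T:L|\,|L:U\cap L|$ and Guralnick's exact indices, which makes that final step computer-free.
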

	
	\begin{proof}
		It suffices to consider the cases recorded in Corollary \ref{c:trans_linear_prime-power}. Note that the group $T^{\infty}$ is quasisimple in each case. It is also easy to see that $p\nmid |Z(T^{\infty})|$ and $p^d\nmid |T:T^{\infty}|$.
		
		Suppose $T$ has a subgroup of index $p^d$. With the observations above, the simple group $T^{\infty}/Z(T^{\infty})$ has a subgroup of index $p^m$ for some $m\geqs 1$. By Theorem \ref{t:prime-power}, the only possibilities for $(T^{\infty}/Z(T^{\infty}),p^m)$ are
		\begin{equation*}
		(\PSL_3(2),2^3),\ (\PSL_4(2),2^3),\ (\PSp_4(3),3^3).
		\end{equation*}
		Now the result follows with the aid of {\sc Magma} \cite{Magma}.
	\end{proof}
	
	We will also need to use the following lemma on perfect groups.
	
	\begin{lem}
		\label{l:perfect}
		Let $G$ be a group and let $N,M\normeq G$. Suppose $N$ is perfect and $N\cap M\leqs Z(N)$. Then $[N,M] = 1$.
	\end{lem}
	
	\begin{proof}
		Write $\overline{N}$ and $\overline{M}$ for the images of $N$ and $M$ in the quotient group $G/Z(N)$, respectively, noting that $[\overline{N},\overline{M}] = 1$. Let $g\in M$, which induces an automorphism $\a_g$ on $N$ by conjugation. Then the action of $\a_g$ on $\overline{N}$ is trivial as $[\overline{N},\overline{M}] = 1$. Note that the map $h\mapsto h^{\a_g}h^{-1}$ is a group homomorphism from $N$ to $Z(N)$, which implies that $\a_g$ is a trivial automorphism as $N$ is perfect. Therefore, we have $[N,M] = 1$.
	\end{proof}

	Recall that the \textit{soluble radical} of a group is its maximal soluble normal subgroup.
	
	\begin{lem}
		\label{l:sol_rad}
		Suppose $T$ is a transitive linear group and $T^{\infty}$ is quasisimple. Then $T$ has a cyclic soluble radical.
	\end{lem}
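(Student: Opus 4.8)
The plan is to locate the soluble radical $R$ of $T$ inside a Singer-type cyclic group, namely the centraliser of the quasisimple normal subgroup $T^\infty$ in $\GL_d(p)$. First I would observe that both $T^\infty$ and $R$ are characteristic, hence normal, in $T$, so $R\cap T^\infty$ is a soluble normal subgroup of $T^\infty$. Since $T^\infty$ is quasisimple, the quotient $T^\infty/Z(T^\infty)$ is non-abelian simple, so any soluble normal subgroup of $T^\infty$ maps to the trivial subgroup of $T^\infty/Z(T^\infty)$; thus $R\cap T^\infty\leqs Z(T^\infty)$.

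This is exactly the configuration needed to invoke Lemma \ref{l:perfect}, applied with $N=T^\infty$ and $M=R$: both are normal in $T$, $N$ is perfect (being quasisimple), and $N\cap M\leqs Z(N)$. The lemma then yields $[T^\infty,R]=1$, that is, $R\leqs C_{\GL_d(p)}(T^\infty)$.

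It remains to show that $C_{\GL_d(p)}(T^\infty)$ is cyclic. Writing $V=\mathbb{F}_p^d$, I would argue that $T^\infty$ acts irreducibly on $V$, so that by Schur's lemma the endomorphism ring $\mathrm{End}_{\mathbb{F}_p[T^\infty]}(V)$ is a finite division ring, hence a field $\mathbb{F}$ by Wedderburn's little theorem. Consequently $C_{\GL_d(p)}(T^\infty)=\mathbb{F}^\times$ is cyclic, and therefore so is its subgroup $R$. The required irreducibility can be read off from the classification in Table \ref{tab:2-trans_affine}: the assumption that $T^\infty$ is quasisimple forces $T^\infty$ to be one of the natural-module groups $\SL_a(q)$, $\Sp_{2a}(q)'$ or $G_2(q)'$ (where $\mathbb{F}=\mathbb{F}_q$ and $C_{\GL_d(p)}(T^\infty)\cong\bbZ_{q-1}$), or one of the finitely many exceptions $\SL_2(5)$, $A_7$, $\SL_2(13)$, each of which acts irreducibly on the relevant module with cyclic centralising field.

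The hard part will be this last step: passing from irreducibility of $T$ (which is immediate, as transitive linear groups are irreducible) to irreducibility of the smaller group $T^\infty$, and then identifying the centralising field. Over the prime field one must be careful, since the natural module of $\SL_a(q)$ with $q=p^f$ has $\mathbb{F}_p$-dimension $af$; the key point is that $\mathbb{F}_q$ is its field of definition, so restriction of scalars keeps the module $\mathbb{F}_p$-irreducible with endomorphism field exactly $\mathbb{F}_q$, producing the cyclic centraliser $\bbZ_{q-1}$. For the three sporadic entries this is a direct check. Once Lemma \ref{l:perfect} has been applied, everything else is formal.
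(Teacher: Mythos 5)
Your proposal is correct and follows essentially the same route as the paper's proof: you show $R\cap T^\infty\leqs Z(T^\infty)$, invoke Lemma \ref{l:perfect} to get $[R,T^\infty]=1$, and then use irreducibility of $T^\infty$ (read off from Table \ref{tab:2-trans_affine}) together with Schur's lemma and Wedderburn to identify $C_{\GL_d(p)}(T^\infty)$ with the multiplicative group of a field. Your extra care about restriction of scalars from $\mathbb{F}_q$ to $\mathbb{F}_p$ is a correct elaboration of a step the paper leaves implicit, not a deviation.
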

	
	\begin{proof}
		Let $R$ be the soluble radical of $T$. Then $T^{\infty}\cap R\leqs Z(T^{\infty})$ is a normal subgroup of $T$. By Lemma \ref{l:perfect}, we have $[R,T^{\infty}] = 1$. Now $T^{\infty}$ is irreducible on $\mathbb{F}_p^d$ by inspecting Table \ref{tab:2-trans_affine}. It can be deduced from Schur's lemma that $C_{\GL_d(p)}(T^{\infty})$ is isomorphic to the multiplicative group of a field, so $R\leqs C_{\GL_d(p)}(T^{\infty})$ is cyclic.
	\end{proof}
	
	We conclude this section with two applications of Lemma \ref{l:sol_rad}.
	
	\begin{lem}
		\label{l:norm_series}
		Suppose $T$ is a transitive linear group and $T^{\infty}$ is quasisimple. If $R\normeq S\normeq T$ and $S/R$ is an elementary abelian $r$-group for some prime $r$, then either $S/R\cong \bbZ_r$, or $T^{\infty}\leqs R$ and $T/R$ is metacyclic.
	\end{lem}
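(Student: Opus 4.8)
The plan is to run a dichotomy on the normal subgroup $T^\infty\cap S$ of the quasisimple group $T^\infty$, treating the easy branch via Lemma \ref{l:perfect} and the hard branch by pinning down the structure of $T/T^\infty$. I interpret the hypothesis as $R\normeq T$ and $S\normeq T$ (this is what makes $T/R$ a group), and I may assume $S\neq R$, the case $S=R$ being vacuous. Throughout write $V=\mathbb{F}_p^d$ and $C=C_T(T^\infty)$, and record two facts. First, since $T^\infty$ is quasisimple, $Z:=Z(T^\infty)\leqs C$ and every normal subgroup of $T^\infty$ is either contained in $Z$ or equal to $T^\infty$. Second, by the proof of Lemma \ref{l:sol_rad}, $T^\infty$ is irreducible on $V$ and $C_{\GL_d(p)}(T^\infty)$ is the multiplicative group of a field; in particular $C$ is cyclic. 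Note also that $T^\infty$, being the perfect core of $T$, is characteristic and hence normal in $T$.

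First I would dispose of the branch $T^\infty\cap S\leqs Z$. Here Lemma \ref{l:perfect}, applied with the perfect normal subgroup $T^\infty$ and the normal subgroup $S$, gives $[T^\infty,S]=1$, so $S\leqs C$ is cyclic. Then $S/R$ is a cyclic elementary abelian $r$-group, and as $S\neq R$ this forces $S/R\cong\bbZ_r$, which is the first conclusion.

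In the remaining branch $T^\infty\leqs S$, I would first show that in fact $T^\infty\leqs R$. Indeed $T^\infty\cap R$ is normal in $T^\infty$, so it is contained in $Z$ or equal to $T^\infty$. Were it contained in $Z$, then (as $Z\neq T^\infty$) the quotient $T^\infty R/R\cong T^\infty/(T^\infty\cap R)$ would be a non-trivial perfect subgroup of the abelian group $S/R$, which is impossible. Hence $T^\infty\leqs R$. Since $T^\infty\leqs R\normeq T$, the group $T/R$ is a quotient of $T/T^\infty$, and as metacyclicity passes to quotients it remains only to prove that $T/T^\infty$ is metacyclic.

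The main obstacle is this last claim, which I would establish by a uniform field-automorphism reduction followed by a short inspection of Table \ref{tab:2-trans_affine}. Let $E=\mathrm{End}_{T^\infty}(V)$, a field with $E^\ast=C_{\GL_d(p)}(T^\infty)$, and view $V$ as an $E$-space, so $\GL_E(V)=\GL_a(E)$ with $|E|=q$. As $T$ normalises $T^\infty$ it normalises $E$ and thus acts on it, inducing a subgroup of the cyclic group $\mathrm{Gal}(E/\mathbb{F}_p)$; letting $T_0$ be the kernel (the $E$-linear elements of $T$) we obtain that $T/T_0$ is cyclic and $T^\infty\leqs T_0\leqs N_{\GL_E(V)}(T^\infty)$. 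The decisive point is that, for each quasisimple transitive linear group in Table \ref{tab:2-trans_affine}, the quotient $N_{\GL_E(V)}(T^\infty)/T^\infty$ is cyclic: it is $\GL_a(E)/\SL_a(E)\cong E^\ast$ in the linear case, $\mathrm{CSp}_{2a}(E)/\Sp_{2a}(E)\cong E^\ast$ in the symplectic case, and $E^\ast$ in the $G_2$ case, the key feature being that the diagonal automorphisms and the scalars are packaged together into a single cyclic quotient. Hence $T_0/T^\infty$ is cyclic and $T/T^\infty$ is cyclic-by-cyclic, i.e.\ metacyclic; the finitely many remaining rows with $T^\infty$ quasisimple are metacyclic by direct inspection (for instance $T/T^\infty\leqs D_8$ in the relevant $3^4$ case, and $T=T^\infty$ for $A_7$ and $\SL_2(13)$). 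I expect the genuine difficulty to be isolated exactly here: a priori $T/T^\infty$ mixes scalar, diagonal and field contributions and could carry a normal series of length three, so the crux is to see that the scalar-and-diagonal part already forms a single cyclic group $N_{\GL_E(V)}(T^\infty)/T^\infty$, leaving only a cyclic field quotient on top.
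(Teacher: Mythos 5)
Your proof is correct and follows essentially the same route as the paper: the paper splits according to whether $S$ is soluble (hence cyclic by Lemma \ref{l:sol_rad}, whose proof is exactly your Lemma \ref{l:perfect} plus Schur's lemma argument in the first branch) or insoluble (forcing $T^{\infty}\leqs S$ and then $T^{\infty}\leqs R$ via the perfect-subgroup-of-abelian-quotient observation), and then simply quotes that $T/T^{\infty}$ is metacyclic by inspecting Table \ref{tab:2-trans_affine}. Your uniform Galois/similitude argument that $N_{\GL_E(V)}(T^{\infty})/T^{\infty}$ is cyclic is a fuller justification of the step the paper leaves as table inspection, not a different approach.
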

	
	\begin{proof}
		By Lemma \ref{l:sol_rad}, the soluble radical of $T$ is cyclic. Hence, $S$ is either cyclic or insoluble. If the former holds, then $S/R$ is also cyclic and thus $S/R\cong\bbZ_r$. 
		Now assume $S$ is insoluble.
		Since $S^\infty \normeq T^\infty$ and $T^\infty$ is quasisimple, we must have $S^\infty= T^\infty$.
		Since $S/R$ is elementary abelian, we have $T^{\infty}\leqs R$. By inspecting Table \ref{tab:2-trans_affine}, $T/T^{\infty}$ is metacyclic, and hence $T/R$ is also metacyclic.
	\end{proof}
	
	The following result might be of independent interest.
	
	\begin{thm}
		\label{t:norm_reducible}
		Suppose $T\leqs\GL_d(p)$ is a transitive linear group and $R\normeq T$ is reducible on $\mathbb{F}_p^d$. Then $R$ is cyclic.
	\end{thm}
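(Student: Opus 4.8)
The plan is to split the argument according to the structure of the perfect residual $T^{\infty}$, reading off the possibilities from the classification of transitive linear groups in Table \ref{tab:2-trans_affine}. Inspecting that table, either $T^{\infty}$ is quasisimple (as in the hypothesis of Lemma \ref{l:sol_rad}), or $T$ is soluble, or $(d,p)=(4,3)$ with $2^{1+4}_{-}\normeq T\leqs 2^{1+4}_{-}.S_5$. The quasisimple case is the main one and admits a clean, uniform treatment; the other two families form a short explicit list that I would dispose of by hand.

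Suppose first that $T^{\infty}$ is quasisimple; here I would argue essentially as in the proof of Lemma \ref{l:sol_rad}, where it is recorded that $T^{\infty}$ is then irreducible on $\mathbb{F}_p^d$. Since $R\normeq T$ and $T^{\infty}\normeq T$, the subgroup $R\cap T^{\infty}$ is normal in $T^{\infty}$, so by quasisimplicity either $R\cap T^{\infty}=T^{\infty}$ or $R\cap T^{\infty}\leqs Z(T^{\infty})$. In the first case $T^{\infty}\leqs R$, and then $R$ would be irreducible because $T^{\infty}$ is, contrary to hypothesis. Hence $R\cap T^{\infty}\leqs Z(T^{\infty})$, and Lemma \ref{l:perfect} (applied with $N=T^{\infty}$ and $M=R$) gives $[T^{\infty},R]=1$, that is, $R\leqs C_{\GL_d(p)}(T^{\infty})$. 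Exactly as in Lemma \ref{l:sol_rad}, irreducibility of $T^{\infty}$ together with Schur's lemma shows that $C_{\GL_d(p)}(T^{\infty})$ is the multiplicative group of a finite field, hence cyclic; thus $R$ is cyclic, as required.

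It remains to treat the non-quasisimple groups. For the extraspecial case $2^{1+4}_{-}\normeq T\leqs 2^{1+4}_{-}.S_5$ in dimension $4$ over $\mathbb{F}_3$, I would write $E=2^{1+4}_{-}$, an irreducible normal subgroup of $T$ with $Z(E)=\langle z\rangle$ of order $2$, where $z$ acts as the scalar $-1$. Since $T/E$ embeds in $S_5\cong \mathrm{O}_4^{-}(2)$ acting faithfully on $E/Z(E)\cong 2^4$, the only $T$-invariant subgroups of $E$ are $1$, $Z(E)$ and $E$; as $E$ is irreducible, a reducible $R$ cannot contain $E$, forcing $R\cap E\leqs Z(E)$. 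When $A_5\leqs T/E$ one computes $T^{\infty}=E.A_5$ and $|T/T^{\infty}|\leqs 2$, whence $R\cap T^{\infty}\leqs Z(E)$ and $|R|\leqs 4$ with $R$ abelian; a non-cyclic $R\cong\bbZ_2^2$ would require an involution $g\in T\setminus T^{\infty}$ with $g\langle z\rangle$ central in $T/\langle z\rangle$, but then $g$ would centralise $E/Z(E)$, contradicting the faithful action of $T/E$ on $E/Z(E)$. Hence $R$ is cyclic. The soluble groups—those lying in $\GammaL_1(p^d)$ together with the finitely many exceptional soluble transitive linear groups in dimensions $2$ and $4$ (including the case $A_5\not\leqs T/E$ above)—I would handle directly: in the semilinear case the torus $T\cap\mathbb{F}_{p^d}^{\times}$ is cyclic, and a reducible normal subgroup is constrained by the subfield structure of $\mathbb{F}_{p^d}$ and by the normality condition on its Galois part to be cyclic, while the remaining finitely many groups may be verified by inspection (for instance with {\sc Magma} \cite{Magma}).

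The essential content is the quasisimple case, which falls out cleanly from Lemma \ref{l:perfect} and Schur's lemma. I expect the main obstacle to be the non-quasisimple groups, where there is no quasisimple layer on which to run the centralising argument: one must instead rule out non-cyclic reducible normal subgroups by hand, most delicately for the semilinear groups inside $\GammaL_1(p^d)$ and in the extraspecial case, where a careful analysis of the normality condition (and possibly a short computation) is needed.
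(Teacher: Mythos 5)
Your case division matches the paper's exactly, and your handling of the two insoluble families is sound. For quasisimple $T^{\infty}$ you run essentially the paper's argument, in fact a little more directly: the paper first proves via Lemma \ref{l:perfect} and Schur's lemma that the soluble radical is cyclic (Lemma \ref{l:sol_rad}) and then observes that a reducible normal $R$ lies in the soluble radical, whereas you apply Lemma \ref{l:perfect} straight to $R$ itself; both routes are correct and rest on the same two ingredients. Your analysis of the extraspecial case $(d,p)=(4,3)$ is actually \emph{more} detailed than the paper's, which simply records (implicitly by direct check) that the only non-trivial reducible normal subgroup is $\bbZ_2$; your argument works, though the assertion that $T/E$ acts faithfully on $E/Z(E)$ deserves a sentence (one needs that an element of $T$ acting trivially on $E/Z(E)$ induces an automorphism of $E$ trivial on the Frattini quotient, hence inner, and that the scalars of $\GL_4(3)$ already lie in $E$), as does the claim that no proper $T$-invariant subgroup of $E$ covers $E/Z(E)$ (an index-two complement to $Z(E)$ would be abelian of order $16$, which $2^{1+4}_{-}$ does not contain).

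The genuine gap is the soluble case, specifically the infinite family $T\leqs\GammaL_1(p^d)$, which is precisely where the paper's proof does most of its work. Verification ``by inspection (for instance with {\sc Magma})'' is available only for the finitely many exceptional soluble groups; for the semilinear family, your one sentence about the subfield structure and ``the normality condition on its Galois part'' is not an argument. Writing $\GammaL_1(p^d)=\la x\ra{:}\la y\ra\cong\bbZ_{p^d-1}{:}\bbZ_d$, the task is to rule out reducible normal subgroups containing an element $x^ay^b$ with $b\ne 0$; note that such subgroups can a priori be both reducible and non-cyclic, e.g.\ groups of the shape $\la x^{(p^d-1)/(p-1)}\ra\times\la y\ra$ (the scalars together with the Frobenius, which fixes the line $\mathbb{F}_p$), so cyclicity really has to be proved. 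The paper's mechanism is as follows: by Zsigmondy, $p^d-1$ has a primitive prime divisor $r$; transitivity gives $r\mid |T|$, and since $r\nmid d$ the unique subgroup of order $r$ lies in the torus, so $x^{(p^d-1)/r}\in T$; then for $x^ay^b\in R$ with $b\ne 0$ the commutator $[x^{(p^d-1)/r},x^ay^b]=x^{(p^d-1)(p^b-1)/r}\in R$ has order divisible by $r$ (as $r\nmid p^b-1$ for $0<b<d$), and a torus element of order divisible by $r$ generates an irreducible subgroup, contradicting reducibility; hence $R\leqs\la x\ra$ is cyclic. Moreover, Zsigmondy fails exactly when $(d,p)=(6,2)$ or $d=2$ with $p$ a Mersenne prime, and these demand separate treatment (the paper uses {\sc Magma} for the former and a bespoke commutator analysis of the index-two subgroups of $\GammaL_1(p^2)$ for the latter). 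None of this --- the primitive-prime-divisor idea, the commutator computation, or the exceptional Mersenne analysis --- appears in your sketch, so as written the soluble case, and with it the theorem, is unproved.
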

	
	\begin{proof}
		First assume $T$ is insoluble. If $T^{\infty}$ is quasisimple, then $T^{\infty}$ is irreducible by inspecting Table \ref{tab:2-trans_affine}, and thus $R$ is contained in the soluble radical of $T$, so $R$ is cyclic by Lemma \ref{l:sol_rad}. If $T^{\infty}$ is not quasisimple, then it is easy to see that $(d,p) = (4,3)$ and $T\in\{2_-^{1+4}.A_5,2_-^{1+4}.S_5\}$ by inspecting Table \ref{tab:2-trans_affine}. In this setting, the only non-trivial reducible normal subgroup of $T$ is isomorphic to $\bbZ_2$.
		
		To complete the proof, we assume $T$ is soluble. Note that the only infinite family of soluble transitive linear groups is $T\leqs \mathrm{\Gamma L}_1(p^d)$, and the result for the other cases can be verified with the aid of {\sc Magma}. Thus, we may assume that
		\begin{equation*}
		T\leqs\mathrm{\Gamma L}_1(p^d) = \la x\ra{:}\la y\ra\cong \bbZ_{p^d-1}{:}\bbZ_d.
		\end{equation*}
		The groups with $(d,p) = (6,2)$ can be handled using {\sc Magma}. Now assume $d = 2$ and $p = 2^k-1$ for some $k$, and we may assume that $x^ay\in R$ for some $a$. If $T = \mathrm{\Gamma L}_1(p^d)$, then $[x,x^ay] = x^{p-1}\in R$, which gives a contradiction as $\la x^{p-1}\ra$ is irreducible. Thus, $T<\mathrm{\Gamma L}_1(p^d)$, noting that $|\mathrm{\Gamma L}_1(p^d):T| = 2$. In this setting, either $T$ is cyclic, or $T = \la x^2,xy\ra$. If $T$ is the latter group, then $[x^2,x^ay] = x^{2(p-1)}\in R$ is reducible, which implies that $x^{2(p-1)}\in\la x^{p+1}\ra$ and so $p+1\mid 2(p-1)$. It follows that $p = 3$, in which case the lemma can be checked easily using {\sc Magma}.
		
		Finally, we assume neither $(d,p) = (6,2)$, nor $d = 2$ and $p = 2^k-1$ for some $k$. By Zsigmondy's theorem \cite{Z_ppd}, it follows that $p^d-1$ has a primitive prime divisor $r$, and we have $x^{(p^d-1)/r}\in T$ since $T$ is transitive (so $r$ divides $|T|$) and $r\nmid d$. Suppose $x^ay^b\in R$ for some $0\leqs a < p^d-1$ and $0\leqs b<d$. Then
		\begin{equation*}
		[x^{(p^d-1)/r},x^ay^b] = x^{(p^d-1)(p^b-1)/r}\in R.
		\end{equation*}
		Since $r\nmid p^b-1$, the order of $x^{(p^d-1)(p^b-1)/r}$ is divisible by $r$ and thus $R$ is irreducible, a contradiction.
	\end{proof}

	\section{The proof of Theorem \ref{thm:reduction}}
	
	\label{s:reduction}
	
	In this section, we will use Theorem \ref{thm:semi} to establish our main result Theorem \ref{thm:reduction} (we will independently prove Theorem \ref{thm:semi} later in Section \ref{s:semi_proof}). As discussed in Section \ref{ss:pre_struc}, we assume Hypothesis \ref{hypo:imprim} throughout, where $K = G_{(\calB)} = G\cap Y^n\ne 1$ and $Y$ is affine (and recall that $Y = G_B^B$). We write $M = \soc(Y)$, so $M\cong \bbZ_p^d$ for some prime $p$ and integer $d$, and thus $|B| = p^d$. Let $L = G\cap M^n$ and $C = C_G(L)$. Note that $L\leqs K$ is an elementary abelian normal $p$-subgroup of $G$.
	
	A key ingredient in the proof of Theorem \ref{thm:reduction} is the following proposition. Recall that $G$ is said to be \textit{semiprimitive} if every normal subgroup of $G$ is either transitive or semiregular. In particular, $G$ is semiprimitive if $G$ is innately transitive.
	
	\begin{prop}
		\label{p:red}
		Assume Hypothesis \ref{hypo:imprim}, where $G_{(\calB)}\ne 1$ and $G_B^B$ is affine. If $G$ has rank $3$ on $\Omega$, then one of the following holds:
		\begin{itemize}\addtolength{\itemsep}{0.2\baselineskip}
			\item[{\rm (i)}] $C = L$;
			\item[{\rm (ii)}] $G^\calB$ is affine and $G$ has a regular normal subgroup;
			\item[{\rm (iii)}] $G^\calB$ is almost simple and $G$ is semiprimitive.
		\end{itemize}
	\end{prop}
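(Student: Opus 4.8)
The plan is to extract the whole trichotomy from the single dichotomy ``$C=L$ versus $C$ transitive''. Recall first that, $G$ having rank $3$ with block system $\calB$, the group $G^\calB$ is $2$-transitive, hence either affine or almost simple; this is the split that will eventually separate conclusions (ii) and (iii). The first task is to pin down $C\cap K$. I would begin by showing $L\ne 1$: the nontrivial normal subgroup $K$ contains a minimal normal subgroup $N$ of $G$, and since $1\ne N^{B_i}\normeq Y$ we have $M_i=\soc(Y)\leqs N^{B_i}$; as $N$ is characteristically simple it cannot be nonabelian (a nonabelian characteristically simple group has no nontrivial abelian normal subgroup, yet $M_i\normeq N^{B_i}$), so $N$ is elementary abelian, forcing $N^{B_i}=M_i$ because $C_Y(M)=M$, whence $N\leqs M^n$ and $1\ne N\leqs L$. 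Next, each $\pi_i(L)\normeq Y$ lies in the minimal normal subgroup $M_i$, so $\pi_i(L)\in\{1,M_i\}$; since $L\ne 1$ and $G$ is transitive on $\calB$, in fact $\pi_i(L)=M_i$ for all $i$. Because commutators in $Y^n$ are coordinatewise, $C_K(L)=K\cap\prod_i C_Y(\pi_i(L))=K\cap\prod_i C_Y(M_i)=K\cap M^n=L$, i.e. $C\cap K=L$.

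With this in hand the dichotomy is immediate. As $L\leqs C\normeq G$, Lemma \ref{l:norm_trans} gives either $C\leqs K$, in which case $C=C\cap K=L$ and conclusion (i) holds, or $C$ is transitive on $\Omega$, in which case $C\ne L$. Assume the latter. Then $L\leqs Z(C)$ is a central subgroup of the transitive group $C$, hence semiregular, and since $\pi_i(L)=M_i$ its orbits are exactly the blocks (so $|L|=p^d$). It remains to analyse the two possibilities for $G^\calB$.

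If $G^\calB$ is affine, I aim for (ii) by exhibiting a regular normal subgroup. Let $V=\soc(G^\calB)$ be the regular elementary abelian socle; the transitive normal subgroup $CK/K$ of $G^\calB$ contains $V$, so I take $W\leqs C$ to be the preimage of $V$ under $C\to C/L\cong CK/K$. Since $V\normeq G^\calB$, the subgroup $W$ is normal in $G$; moreover $W\cap K=L$ and $W/L\cong V$, so $|W|=|L|\,|V|=p^d\,|\calB|=|\Omega|$. As $W$ induces the transitive group $V$ on $\calB$ while $W\cap K=L$ is transitive on each block, $W$ is transitive, and the order count then forces $W$ to be regular, giving (ii).

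If $G^\calB$ is almost simple with socle $T$, I aim for (iii), namely that $G$ is semiprimitive, and this is the crux. Let $R$ be the preimage of $T$ in $G$ and set $C_0=C\cap R\normeq G$. Since $CK\supseteq R$, a Dedekind argument gives $R=C_0K$, whence $C_0/L\cong T$ and $C_0\cap K=L$, with $L\leqs Z(C_0)$. Put $D=C_0^\infty\normeq G$; as $D$ maps onto $T\ne 1$ we have $D\not\leqs K$, so $D$ is transitive by Lemma \ref{l:norm_trans}. The key step is to prove $[D,K]=1$: from $[K,C_0]\leqs K\cap C_0=L\leqs Z(C_0)$ one gets $[K,C_0,C_0]=1$, and the three subgroups lemma then yields $[C_0',K]=1$, so $[D,K]=1$ since $D\leqs C_0'$. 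Consequently $K\leqs C_G(D)$, which is semiregular because $D$ is transitive; hence $K$ is semiregular, every normal subgroup of $G$ contained in $K$ is semiregular, and together with Lemma \ref{l:norm_trans} this shows $G$ is semiprimitive. I expect the main obstacle to lie precisely in this last case: identifying the transitive perfect normal subgroup $D$ and extracting $[D,K]=1$ from the central commutator relation via the three subgroups lemma, which is what upgrades ``$C\ne L$'' to genuine semiregularity of the kernel $K$.
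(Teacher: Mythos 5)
Your proof is correct, and its overall architecture coincides with the paper's: establish $C\cap K=L$, apply Lemma \ref{l:norm_trans} to the normal subgroup $C$ to split into ``$C=L$'' versus ``$C$ transitive'', then treat the two types of $G^\calB$ separately, in each case producing either a regular normal subgroup or a transitive normal subgroup centralising $K$. Within that skeleton you diverge from the paper in three local but genuine ways. First, you prove $L\ne 1$ and $\pi_i(L)=M_i$ explicitly (via a minimal normal subgroup of $G$ inside $K$ and the fact that a nonabelian characteristically simple group has no nontrivial abelian normal subgroup); the paper leaves this implicit in the assertion $C_{K^B}(L^B)\leqs C_Y(M)=M$, so your version actually fills in a step the paper glosses over. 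Second, in the affine branch the paper identifies $O_r(C/L)=\soc(G^\calB)$ via a characteristic-subgroup argument and then splits into the cases $p=r$ and $p\ne r$ to manufacture a regular normal subgroup ($O_p(C)$, respectively $L\times O_r(C)$); you instead take the preimage $W$ of the socle in $C$ directly, observe that $W$ is the intersection of two normal subgroups of $G$ and hence normal, and finish with an order count $|W|=|L||V|=|\Omega|$ --- shorter and avoiding the case split, since normality in $G$ comes for free rather than through characteristic subgroups of $C$. Third, in the almost simple branch the paper shows $C^{\infty}$ is quasisimple with centre $C^{\infty}\cap K$ and invokes Lemma \ref{l:perfect} (whose proof uses the homomorphism $h\mapsto h^{\a_g}h^{-1}$) to get $[C^{\infty},K]=1$; you work with $C_0=C\cap R$, extract $[K,C_0]\leqs K\cap C_0=L\leqs Z(C_0)$ from normality, and apply the three subgroups lemma to obtain $[C_0',K]=1$, then restrict to $D=C_0^{\infty}$. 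The two mechanisms are equivalent in strength here; the paper's Lemma \ref{l:perfect} has the advantage of being reused later (e.g.\ in Lemma \ref{l:normalsl}), while your three-subgroups argument is self-contained, does not need quasisimplicity of the subgroup in play, and is arguably the more standard commutator calculus. All the auxiliary claims you make (the Dedekind step $R=C_0K$, $C_0\cap K=L$, $D\not\leqs K$, and semiregularity of the centraliser of a transitive group) check out.
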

	
	\begin{proof}
		Since $L$ is abelian, $L \leqs C$. Thus, we may assume $L<C$ from now on. Note that $C_{K^B}(L^B)\leqs C_{Y}(M) = M$, which yields
		\begin{equation*}
		C\cap K = C_K(L)\leqs G\cap M^n = L.
		\end{equation*}
		Hence, $C\cap K = L$ as $L\leqs K$ and $L\leqs C$. In particular, $C$ is not a subgroup of $K$ (otherwise $C = C\cap K = L$, a contradiction). By Lemma \ref{l:norm_trans}, it follows that $C$ is transitive on $\Omega$ as $C\normeq G$, whence $L\leqs Z(C)$ is semiregular. By Lemma \ref{l:DGLPP:l:2.3}, the set of $L$-orbits on $\Omega$ is $\calB$, and thus $L$ acts regularly on $B$, so $|L| = p^d$. Notice that
		\begin{equation}\label{e:C/L}
		1\ne C/L = C/(K\cap C)\cong CK/K\normeq G/K = G^\calB,
		\end{equation}
		and we may view $C/L$ as a non-trivial normal subgroup of $G^\calB$.
		
		First assume $G^\calB = N{:}T$ is affine, where $N = \soc(G^\calB)\cong \bbZ_r^m$ for some prime $r$.	Observe that $N\normeq C/L$ as $N$ is the unique minimal normal subgroup of $G^\calB$. Thus, $N\leqs O_r(C/L)$, so $Z(O_r(C/L))\leqs C_{G^\calB}(N) = N$. Since $Z(O_r(C/L))\ne 1$ is a characteristic subgroup of $O_r(C/L)$, we have $Z(O_r(C/L))\normeq G^\calB$ and hence $Z(O_r(C/L)) = N$. Therefore, $O_r(C/L)\leqs C_{G^\calB}(N) = N$, which yields $O_r(C/L) = N$ and $C$ has a regular normal subgroup $L.N$, noting that $|L| = |B|$ and $|N| = |\calB|$. If $p = r$, then $O_p(C) = L.N$, which is a regular normal subgroup of $G$ since $O_p(C)$ is a characteristic subgroup of $C\normeq G$. If $p\ne r$, then the extension $L.N$ is split, and moreover, we have $L.N \cong L\times N$ as $L.N\leqs C = C_G(L)$. In this case, $L.N = L\times O_r(C)$ is a regular normal subgroup of $G$.
		
		To complete the proof, we turn to the case where $G^\calB = G/K$ is almost simple. In view of \eqref{e:C/L},  $C^{\infty}\normeq G$ is quasisimple with centre $C^{\infty}\cap K$ and a non-abelian simple composition factor isomorphic to $\soc(G^\calB)$. By Lemma \ref{l:perfect}, we have $[C^{\infty},K] = 1$, and Lemma \ref{l:norm_trans} implies that $C^{\infty}$ is transitive on $\Omega$. It follows that $K\leqs C_G(C^{\infty})$ is semiregular, which implies that $G$ is semiprimitive by Lemma \ref{l:norm_trans}.
	\end{proof}

	We will determine the groups arising in Proposition \ref{p:red}(iii) in Theorem \ref{thm:semi}. To complete the proof of Theorem \ref{thm:reduction} (assuming Theorem \ref{thm:semi}), we may assume that $C = C_G(L) = L$. In view of Corollary \ref{c:KB_trans}, we only need to consider the groups such that $K_{(B)}$ is intransitive on $B'\in\calB\setminus\{B\}$. Now we turn to the case where $K_{(B)} = 1$ and $C = L$. In this setting, we note that $
	L_{(B)} = 1$ and so $L \cong L^B = M\cong\mathbb{Z}_p^d$ acts regularly on $B$, and we will show that there is no rank $3$ group arises.

	\begin{lem}
		\label{l:L_semireg}
		Assume Hypothesis \ref{hypo:imprim}, where $G_{(\calB)}\ne 1$ and $G_B^B$ is affine. If $G$ has rank $3$ on $\Omega$, $C = L$ and $K_{(B)} = 1$, then $G_{(B)} = 1$. Moreover, $G_B/L$ and $G/L$ are subgroups of $\Aut(L)\cong \GL_{d}(p)$ acting faithfully and transitively on the non-identity elements of $L$.
	\end{lem}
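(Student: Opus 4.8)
The plan is to exploit the regular action of $L$ on $B$ to translate the point action of $G$ into the conjugation action of $G$ on $L$, and then to push everything through the embedding $G/L\into\Aut(L)\cong\GL_d(p)$ that the hypothesis $C=C_G(L)=L$ provides. Recall from the discussion preceding the lemma that $L\normeq G$ is elementary abelian and acts regularly on $B$, so $L\cong\bbZ_p^d$ and $L_{(B)}=1$; this rests only on $L\normeq G$ together with the uniqueness of the block system $\calB$ in Lemma \ref{l:DGLPP:l:2.3}, since the $L$-orbits form a $G$-block system refining $\calB$. Note also $L\leqs K\leqs G_B$.

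First I would introduce the conjugation homomorphism $\theta\colon G\to\Aut(L)$, $\theta(g)\colon \ell\mapsto g^{-1}\ell g$, whose kernel is $C_G(L)=L$; this yields an embedding $G/L\into\Aut(L)\cong\GL_d(p)$, and likewise $G_B/L\into\GL_d(p)$ since $L\leqs G_B$. Next I would fix $\beta_0\in B$ and identify $B$ with $L$ via the regular action, writing $\beta_0^\ell\leftrightarrow\ell$. The central computation is that for $g\in G_{B,\beta_0}$,
\[
(\beta_0^\ell)^g=\beta_0^{\ell g}=\beta_0^{g(g^{-1}\ell g)}=(\beta_0^g)^{g^{-1}\ell g}=\beta_0^{\theta(g)(\ell)},
\]
using $\beta_0^g=\beta_0$. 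Thus under the identification $B\leftrightarrow L$, the permutation induced on $B$ by any $g\in G_{B,\beta_0}$ is exactly $\theta(g)$ acting on $L$.

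Granting this correspondence, each claim becomes a one-line consequence. For $G_{(B)}=1$: any $g\in G_{(B)}$ fixes $B$ setwise and fixes $\beta_0$, hence $g\in G_{B,\beta_0}$, and since $g$ fixes $B$ pointwise the correspondence forces $\theta(g)=\mathrm{id}$, so $g\in\ker\theta=L$ and therefore $g\in L_{(B)}=1$. For transitivity, I would observe that $G_{B,\beta_0}^B=Y_{\beta_0}$ is transitive on $B\setminus\{\beta_0\}$ because $Y=G_B^B$ is $2$-transitive; under the identification this says precisely that $\theta(G_{B,\beta_0})$ is transitive on $L\setminus\{1\}$, whence the larger groups $G_B/L$ and $G/L$ are transitive on $L\setminus\{1\}$ as well. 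Faithfulness on $L\setminus\{1\}$ is automatic, since an element of $\GL_d(p)$ fixing every nonzero vector is trivial, so the kernel of the induced action on $L\setminus\{1\}$ is again $C_G(L)=L$.

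The entire substance of the argument lies in setting up the regular-action correspondence correctly; I do not expect a genuine obstacle, only the bookkeeping that the conjugation action of $g\in G_{B,\beta_0}$ matches its point action on $B$ (the displayed identity above) and the confirmation, already recorded before the statement, that regularity of $L$ on $B$ delivers both $|L|=p^d$ and $L_{(B)}=1$. Every other assertion then follows immediately from $\ker\theta=L$ together with the $2$-transitivity of $Y$.
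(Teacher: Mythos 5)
Your proof is correct and takes essentially the same route as the paper: both rest on the embedding $G/L\into\Aut(L)\cong\GL_d(p)$ furnished by $\ker\theta=C_G(L)=L$ and on transferring the $2$-transitivity of $Y=G_B^B$ through the regular action of $L$ on $B$, the paper merely proving $G_{(B)}=1$ slightly more abstractly, via $[G_{(B)},L]\leqs G_{(B)}\cap L=L_{(B)}\leqs K_{(B)}=1$ (both subgroups being normal in $G_B$), whence $G_{(B)}\leqs C_G(L)=L$ and so $G_{(B)}=L_{(B)}=1$. One small correction to your preamble: $L_{(B)}=1$ follows from the hypothesis $K_{(B)}=1$ (since $L\leqs K$), not from the uniqueness of the block system, which is instead what forces the $L$-orbits to coincide with $\calB$ and hence gives transitivity (and then, by abelianness and faithfulness, regularity) of $L$ on $B$.
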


	\begin{proof}
		First note that $L\normeq G$ and $C_G(L) = L$, so $G/L\leqs \Aut(L)\cong\GL_d(p)$. Observe that $G_{(B)}\normeq G_B$ and $L\normeq G_B$. By our assumption, we have
		\begin{equation*}
		G_{(B)}\cap L = L_{(B)}\leqs K_{(B)} = 1.
		\end{equation*}
		Hence, $G_{(B)}\leqs C_G(L) = L$, and thus $G_{(B)}  = L_{(B)} = K_{(B)} = 1$. Note that
		\begin{equation*}
		G_B \cong G_B/G_{(B)}=G_B^B
		\end{equation*}
		is an affine $2$-transitive group with socle $L\cong M\cong\bbZ_p^d$. Thus,  $G_B/L\leqs G/L\leqs\Aut(L)$ acts faithfully and transitively on the non-identity elements of $L$.
	\end{proof}
	
	\begin{lem}
		\label{l:C=L,KB=1,Xas}
		Assume Hypothesis \ref{hypo:imprim}, where $G_{(\calB)}\ne 1$ and $G_B^B$ is affine. If $C = L$, $K_{(B)} = 1$ and $G^\calB$ is almost simple, then $G$ is not of rank $3$ on $\Omega$.
	\end{lem}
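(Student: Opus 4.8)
The plan is to push the problem into the setting of transitive linear groups, where the tools of Section~\ref{ss:2-trans} apply. By Lemma~\ref{l:L_semireg} we have $G_{(B)}=1$ and $L\cong\bbZ_p^d$ acts regularly on $B$ with $C_G(L)=L$, so $\overline{G}:=G/L\leqs\Aut(L)\cong\GL_d(p)$. Since $\overline{Y}:=G_B/L\cong Y/\soc(Y)$ is a transitive linear group acting on the nonzero vectors of $L\cong\mathbb{F}_p^d$ and $G_B\leqs G$, the larger group $\overline{G}$ is itself transitive linear, and $\overline{Y}$ is a transitive linear subgroup of index $n=|\calB|$. Writing $\overline{K}:=K/L\normeq\overline{G}$, we have $\overline{K}\leqs\overline{Y}$ and
\[
\overline{G}/\overline{K}\cong X:=G^\calB,\qquad \overline{Y}/\overline{K}\cong X_B=G_B^\calB,
\]
where $X$ is almost simple with socle $T$, acting $2$-transitively of degree $n$ with point stabiliser $X_B$. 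As $G^\calB$ is faithful, $\overline{K}$ is the core of $\overline{Y}$ in $\overline{G}$, so the coset action of $\overline{G}$ on $\overline{G}/\overline{Y}$ is permutation isomorphic to $X$.

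First I would record the rank $3$ constraint numerically. By Lemma~\ref{l:DGLPP:l:4.2}(i) the group $G_{B,B'}$ is transitive on $B\times B'$; since $L\leqs G_{B,B'}$ and $G_B$ is transitive on $\calB\setminus\{B\}$ (as $X$ is $2$-transitive), counting gives $|G_{B,B'}|=p^d|\overline{Y}|/(n-1)$, so transitivity on the $p^{2d}$ pairs forces
\[
p^d(n-1)\ \big|\ |\overline{Y}|.
\]
This divisibility, together with the structural information above, will drive the contradiction.

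The heart of the argument is a dichotomy on $\overline{G}^\infty$, using Table~\ref{tab:2-trans_affine}. In the generic case $\overline{G}^\infty$ is quasisimple, so $T=\overline{G}^\infty/Z(\overline{G}^\infty)$ is the unique non-abelian composition factor of $\overline{G}$; hence $\overline{K}$ is soluble and therefore cyclic by Lemma~\ref{l:sol_rad}, and (as in the proof of that lemma) $\overline{K}$ lies in $C_{\GL_d(p)}(\overline{G}^\infty)$, i.e. $\overline{K}$ consists of scalars. Set $R:=\overline{G}^\infty\cap\overline{Y}\normeq\overline{Y}$. Because $T=\soc(X)$ is transitive of degree $n\geqs 2$ it cannot lie in the point stabiliser $X_B$, so $\overline{G}^\infty\not\leqs\overline{Y}$; by Dedekind's law the image of $R$ in $X$ is exactly $T_B=T\cap X_B$, a proper subgroup of $T$. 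In the main configurations—above all $T=\PSL_a(q)$ acting on $\PG_{a-1}(q)$, where $T_B$ is a maximal parabolic $P_1$—the subgroup $T_B$ stabilises a proper nonzero subspace of $L$; since $\overline{K}$ is scalar, $R$ stabilises the same subspace and is thus reducible. As $R\normeq\overline{Y}$ and $\overline{Y}$ is transitive linear, Theorem~\ref{t:norm_reducible} forces $R$ to be cyclic, contradicting the fact that $R$ maps onto the non-cyclic group $T_B$ (whose unipotent radical has order $q^{a-1}$). The same reducibility argument disposes of $T=\PSL_2(q)$, whose point stabiliser is a non-cyclic Borel subgroup.

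To finish I would clear up the residual possibilities. When $T_B$ is irreducible on $L$—the symplectic case $T=\Sp_{2a}(2)$ with $X_B=\mathrm{O}^{\pm}_{2a}(2)$—Theorem~\ref{t:norm_reducible} does not apply directly; but here $T$ is simple so $\overline{K}=1$ and $\overline{Y}\cong\mathrm{O}^{\pm}_{2a}(2)$, which has two orbits on nonzero vectors and so is not transitive linear, a contradiction. The remaining cases are finite in number: $\overline{G}$ soluble inside $\GammaL_1(p^d)$; the exceptional transitive linear groups $A_7$, $\SL_2(13)$ and the extraspecial-normaliser examples with $p^d=3^4$; and the sporadic coincidences forced by exceptional isomorphisms (for instance $T=\PSU_3(3)=G_2(2)'$, where $\overline{Y}$ would have order $216$ yet transitivity on $63$ nonzero vectors of $\mathbb{F}_2^6$ requires $7\mid|\overline{Y}|$). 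These are pinned down by combining the divisibility $p^d(n-1)\mid|\overline{Y}|$—a $p$-valuation estimate already handles $\PSL_2(q)$, since $p^d(n-1)=p^3$ while $p^3\nmid|\GL_2(p)|$—with a direct check in {\sc Magma}. The main obstacle is precisely this systematic matching in the quasisimple case: one must run through every almost simple $2$-transitive group in Table~\ref{tab:2-trans_as} whose socle can occur as $\overline{G}^\infty/Z(\overline{G}^\infty)$, identify the point stabiliser $T_B$ and its action on the transitive linear module $L$, and verify in each family that reducibility (via Theorem~\ref{t:norm_reducible}), the failure of $\overline{Y}$ to be transitive linear, or the divisibility condition yields a contradiction.
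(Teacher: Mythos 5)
Your proposal is correct in substance and shares the paper's overall skeleton --- Lemma \ref{l:L_semireg} to realise $G_B/L$ and $G/L$ as transitive linear subgroups of $\GL_d(p)$, the index-$|B|^2$ condition of Lemma \ref{l:DGLPP:l:4.2} as a divisibility constraint, and a case analysis matching Table \ref{tab:2-trans_affine} against Table \ref{tab:2-trans_as} --- but the engine in the main case is genuinely different. The paper applies Corollary \ref{c:trans_linear_prime-power} to \emph{both} $G/L$ and $G_B/L$, so only the $\SL_a$ ($a\geqs 3$), $\Sp_{2a}$ and $G_2$ families survive, and then rules each out because $G_B/L$ would have a composition factor ($\PSL_{a-1}(q)$, $\Omega_{2a}^{\pm}(2)$, or solubility in the $G_2(2)'$ case) incompatible with being a transitive linear group. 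You instead set $R=\overline{G}^\infty\cap\overline{Y}\normeq\overline{Y}$, note that $R$ surjects onto $T_B$ modulo a central kernel and is therefore reducible whenever $T_B$ is parabolic, and invoke Theorem \ref{t:norm_reducible} to force $R$ cyclic, contradicting the non-cyclic quotient $T_B$; the paper proves Theorem \ref{t:norm_reducible} but deploys it only later (Lemma \ref{l:KB>1_intrans}), not here. Your sharper divisibility $p^d(n-1)\mid|\overline{Y}|$ (the paper uses only $p^d\mid|G_B/L|$) then disposes numerically of the residual cases --- the exceptional degree-$8$ actions of $\PSL_3(2)$ and $\PSL_4(2)$, $A_7$ on $15$ points, $\SL_2(13)$, the $3^4$ examples, and $G_2(2)'$ --- which the paper instead kills by identifying $G_B/L$ explicitly; I checked each of these against your criteria and all are excluded. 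What you buy is a more conceptual treatment of the parabolic case (no appeal to ``it is easy to see that $G_B/L$ is not transitive''); the cost is a larger enumeration base, since you sweep all quasisimple layers in Table \ref{tab:2-trans_affine} rather than first cutting down via Corollary \ref{c:trans_linear_prime-power}.

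Two local repairs are needed. First, in the symplectic case your justification ``$T$ is simple so $\overline{K}=1$'' is a non sequitur: $T$ is simple in the linear case too, where $\overline{K}$ can be non-trivial (indeed $\overline{G}^\infty\cap\overline{K}=Z(\SL_a(q))$ there). The correct reason is already implicit in your use of Lemma \ref{l:sol_rad}: $\overline{K}$ lies in the cyclic soluble radical, which is contained in $C_{\GL_{2a}(2)}(\Sp_{2a}(2)')=1$ because the natural module is absolutely irreducible over $\mathbb{F}_2$; the same remark is required for the $G_2(2)'$ case. Secondly, your estimate ``$p^d(n-1)=p^3$'' for $T=\PSL_2(q)$ is literally correct only when $q=p$; for $q=p^f$ one has $p^d(n-1)=p^{3f}$, while $(|\GammaL_2(p^f)|)_p=(f)_p\,p^f<p^{3f}$, so the conclusion stands after this adjustment (and note the Borel subgroup is cyclic for $q\in\{2,3\}$, though those values never arise here since $\PSL_2(2)$ and $\PSL_2(3)$ are not simple). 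Finally, your concluding systematic sweep is stated as a plan rather than executed; since this matching is exactly where the content of the paper's proof lies, a complete write-up must carry it out, though as noted above each case does fall to one of your three criteria.
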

	
	\begin{proof}
		Suppose $G$ has rank $3$ on $\Omega$ and let $R$ be the socle of $G^\calB = G/K$. Then $R$ is a non-abelian simple composition factor of $G/L$. By Lemma \ref{l:L_semireg}, the groups $G_B/L$ and $G/L$ are both transitive on the non-zero vectors of $\mathbb{F}_p^d$. Note that $|M|^2 = |L|^2 = p^{2d}$ divides $|G_{B,B'}|$ by Lemma \ref{l:DGLPP:l:4.2}, which implies that $p^d$ divides both $|G_B/L|$ and $|G/L|$. Hence, $G_B/L$ and $G/L$ are groups recorded in Corollary \ref{c:trans_linear_prime-power}.
		
		Now we consider each case in Corollary \ref{c:trans_linear_prime-power} in turn. First assume $\SL_a(p^f)\normeq G/L\leqs \mathrm{\Gamma L}_a(p^f)$ for some integer $a\geqs 3$ and prime $p$, where $d = af$. Then $R$ is isomorphic to $\PSL_a(p^f)$, and by inspecting Table \ref{tab:2-trans_as}, we have either $p^f = 2$, $n = 8$ and $a\in\{3,4\}$, or $n = (p^{af}-1)/(p^f-1)$. If the former case holds, then $a\in\{3,4\}$ and $G/L = \GL_a(2)$, which implies that $G_B/L$ is isomorphic to $\bbZ_7{:}\bbZ_3$ or $A_7$. In particular, $G_B/L$ is not one of the groups in Corollary \ref{c:trans_linear_prime-power}, a contradiction. If $n = (p^{af}-1)/(p^f-1)$, then $G_B/L$ has a composition factor $\PSL_{a-1}(p^f)$, and it is easy to see that $G_B/L$ is not a transitive subgroup of $\GL_d(p)$, which is incompatible with Lemma \ref{l:L_semireg}.
		
		Next, assume $\Sp_{2a}(p^f)'\normeq G/L$ for some $a\geqs 2$ and prime $p$, where $d = 2af$, noting that $R$ is isomorphic to $\PSp_{2a}(p^f)'$. By inspecting Table \ref{tab:2-trans_as}, we have $R\cong\Sp_{2a}(2)'$ and $\Omega_{2a}^\pm(2)$ is a composition factor of $G_B/L$. Once again, this is incompatible with Lemma \ref{l:L_semireg}.
		
		Finally, if $G_2(2^f)'\normeq G/L$ and $(p,d) = (2,6f)$, then $f = 1$ and $R\cong G_2(2)'\cong \PSU_3(3)$ by inspecting Table \ref{tab:2-trans_as}. However, this implies that $G_B^\calB$ is soluble, which yields $G_B/L$ is soluble, a contradiction to Lemma \ref{l:L_semireg}.
	\end{proof}

	\begin{lem}
		\label{l:C=L,KB=1,Xaffine}
		Assume Hypothesis \ref{hypo:imprim}, where $G_{(\calB)}\ne 1$ and $G_B^B$ is affine. If $C = L$, $K_{(B)} = 1$ and $G^\calB$ is affine, then $G$ is not of rank $3$ on $\Omega$.
	\end{lem}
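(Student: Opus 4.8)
The plan is to argue by contradiction: I assume $G$ has rank $3$ and aim for an impossibility. Write $T = G/L$, $T_1 = G_B/L$ and $\bar K = K/L$. By Lemma \ref{l:L_semireg} we have $G_{(B)} = 1$ and both $T$ and $T_1$ are transitive linear subgroups of $\Aut(L) \cong \GL_d(p)$; moreover Lemma \ref{l:DGLPP:l:4.2} forces $p^d$ to divide $|T|$ and $|T_1|$. Hence Corollary \ref{c:trans_linear_prime-power} applies to $T$, so $T^{\infty}$ is quasisimple, $T/T^{\infty}$ is metacyclic (inspecting Table \ref{tab:2-trans_affine}), and $T^{\infty}$ is already transitive on $L\setminus\{0\}$. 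Throughout I regard $\bar K \normeq T$ as a normal subgroup of a transitive linear group and $G^\calB = T/\bar K$ as an affine $2$-transitive group with socle $N = \soc(G^\calB)\cong\bbZ_r^m$ satisfying $C_{G^\calB}(N)=N$.

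The first key step is to locate the unique non-abelian composition factor $S = T^{\infty}/Z(T^{\infty})$ of $T$ by considering its image $\tilde S = T^{\infty}\bar K/\bar K \normeq G^\calB$. If $T^{\infty}\not\leq\bar K$, then $T^{\infty}\cap\bar K\leq Z(T^{\infty})$ since $T^{\infty}$ is quasisimple, so $\tilde S$ is a non-trivial quasisimple normal subgroup of $G^\calB$. I will rule this out using the affine structure: as $N$ is the unique minimal normal subgroup and is self-centralising, $[\tilde S,N]\leq \tilde S\cap N\in\{1,N\}$ forces $\tilde S\leq C_{G^\calB}(N)=N$ in either case, contradicting that $\tilde S$ is non-abelian. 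Therefore $T^{\infty}\leq\bar K$.

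Granting $T^{\infty}\leq\bar K$, the quotient $G^\calB = T/\bar K$ is a quotient of the metacyclic group $T/T^{\infty}$, hence metacyclic. Since the socle $N\cong\bbZ_r^m$ lies in a (necessarily metacyclic) Sylow $r$-subgroup, its rank satisfies $m\leq 2$, and the case $m=2$ is excluded because a cyclic normal subgroup $C_1$ of $G^\calB$ with $G^\calB/C_1$ cyclic must meet the irreducible socle $N$ trivially, forcing $N\cong NC_1/C_1$ to be cyclic. Thus $m=1$, so $G^\calB$ is a $2$-transitive affine group of prime degree $r$, whence $G^\calB = \mathrm{AGL}_1(r)$ is sharply $2$-transitive; in particular $(G^\calB)_{B,B'}=1$, giving $G_{B,B'}=K$. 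Now the rank $3$ hypothesis (Lemma \ref{l:DGLPP:l:4.2}(i)) makes $K = G_{B,B'}$ transitive on $B\times B'$, so a point stabiliser $K_\beta$, being a complement to $L$ in $K$ and therefore isomorphic to $\bar K$, is transitive on $B'$. Consequently $\bar K$ has a subgroup of index $|B'|=p^d$.

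Finally, since $\bar K\geq T^{\infty}$ is itself transitive on $L\setminus\{0\}$, it is a transitive linear group in $\GL_d(p)$ possessing a subgroup of index $p^d$; Lemma \ref{l:trans_linear_prime-power_index} then forces $\bar K = \GL_3(2)$ with $(p,d)=(2,3)$. But then $\bar K = \GL_3(2) = \Aut(L)$ contains $T$, so $T=\bar K$ and $G^\calB = T/\bar K = 1$, contradicting $n=r\geq 2$. I expect the main obstacle to be the third step, namely converting the rank $3$ condition into the existence of an index-$p^d$ subgroup of $\bar K$: this rests on the interplay between the $r$-local structure of $G^\calB$ and the $p$-local structure of $\bar K\leq\GL_d(p)$, and in particular on pinning down that $G^\calB$ is sharply $2$-transitive so that $G_{B,B'}$ collapses onto $K$. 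Verifying the metacyclic reduction and the self-centralising-socle argument cleanly is where most of the care is needed.
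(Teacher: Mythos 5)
Your argument is correct and reaches the paper's conclusion along essentially the same skeleton: reduce via Lemma \ref{l:L_semireg} and Lemma \ref{l:DGLPP:l:4.2} to $T = G/L$ and $G_B/L$ being transitive linear groups as in Corollary \ref{c:trans_linear_prime-power}, pin down $G^\calB \cong \mathrm{AGL}_1(r)$ so that $G_{B,B'} = K$, extract a subgroup of index $p^d$ in $\bar{K} = K/L$, apply Lemma \ref{l:trans_linear_prime-power_index} to force $\bar{K} = \GL_3(2)$ with $(d,p)=(3,2)$, and derive the containment contradiction. The one genuine divergence is in the middle step: the paper applies its Lemma \ref{l:norm_series} to the series $K/L \normeq \widetilde{N}/L \normeq G/L$ (with $\widetilde{N}$ the preimage of the socle), obtaining the dichotomy ``either $N \cong \bbZ_r$ or $T^{\infty} \leqs K/L$ with metacyclic quotient'', and only afterwards gets insolubility (hence transitivity) of $G_{B,B'}/L$ from the solubility of $\mathrm{AGL}_1(r)$. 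You instead prove $T^{\infty} \leqs \bar{K}$ unconditionally, exploiting the quasisimplicity of $T^{\infty}$ and the self-centralising socle of the affine group $G^\calB$, and then deduce $m = 1$ from metacyclicity of $T/T^{\infty}$; this buys you the transitivity of $\bar{K}$ for free (it contains the transitive group $T^{\infty}$), at the cost of a separate socle-rank analysis that the paper's Lemma \ref{l:norm_series} packages in one stroke. The approaches are of comparable depth, resting on the same underlying facts (cyclic soluble radical versus self-centralising socle, plus the metacyclicity of $T/T^{\infty}$ read off from Table \ref{tab:2-trans_affine}).

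One small repair is needed in your first step: when $\widetilde{S} \cap N = N$, the containment $[\widetilde{S},N] \leqs N$ alone does not place $\widetilde{S}$ inside $C_{G^\calB}(N)$. You must add that $N$ is then an abelian normal subgroup of the quasisimple group $\widetilde{S}$, hence $N \leqs Z(\widetilde{S})$, which gives $\widetilde{S} \leqs C_{G^\calB}(N) = N$ and the desired contradiction with $\widetilde{S}$ non-abelian. Since you have already established that $\widetilde{S}$ is quasisimple, this is a one-line fix and the rest of the proof stands.
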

	
	\begin{proof}
		Suppose $G$ has rank $3$ on $\Omega$. Write $N = \soc(G^\calB)\cong \bbZ_r^k$ for some prime $r$ and integer $k$. Let $\widetilde{N} = K.N$ be the pre-image of $N$ in $G$ and let $\overline{N} = \widetilde{N}/L$. Then the groups $\overline{K}:=K/L$ and $\overline{N}$ are normal subgroups of $G/L$, with $\overline{N}/\overline{K}\cong N\cong \bbZ_r^k$ an elementary abelian $r$-group. In particular, we have
		\begin{equation*}
		L\normeq K\normeq \widetilde{N}\normeq G\mbox{ and } \overline{K}\normeq \overline{N}\normeq G/L.
		\end{equation*}
		By arguing as in the proof of Lemma \ref{l:C=L,KB=1,Xas}, $G/L$ and $G_B/L$ are one of the transitive linear groups of $\GL_d(p)$ recorded in Corollary \ref{c:trans_linear_prime-power}. In particular, $G$ is insoluble and $(G/L)^{\infty}$ is quasisimple. This allows us to apply Lemma \ref{l:norm_series}, so either $k = 1$ or $G^\calB = G/K$ is metacyclic. In either case, it is easy to see that $N\cong\bbZ_r$ and $G^\calB\cong\mathrm{AGL}_1(r)$, and so $G_{B,B'} = K$ is an insoluble normal subgroup of $G_B$. This implies that $G_{B,B'}/L$ is an insoluble normal subgroup of $G_B/L$. Hence, $G_{B,B'}/L$ is also a transitive linear group recorded in Corollary \ref{c:trans_linear_prime-power}. By Lemma \ref{l:DGLPP:l:4.2}, $G_{B,B'}/L$ has a subgroup of index $p^d$, so $(d,p) = (3,2)$ and $G_{B,B'}/L = \GL_3(2)$ in view of Lemma \ref{l:trans_linear_prime-power_index}. However,
		\begin{equation*}
		\GL_3(2) = G_{B,B'}/L < G/L \leqs \GL_3(2)
		\end{equation*}
		is impossible. This completes the proof.
	\end{proof}

	Now we are in a position to prove Theorem \ref{thm:reduction} assuming the classification in Theorem \ref{thm:semi}.
	
	\begin{proof}[Proof of Theorem \ref{thm:reduction} assuming Theorem \ref{thm:semi}]
		As noted above, we may assume Hypothesis \ref{hypo:imprim}. If $K_{(B)}$ is transitive on $B'$, where $B'\in\calB\setminus\{B\}$, then $G$ has rank $3$ on $\Omega$ by Corollary \ref{c:trans_linear_prime-power}, and $G$ is recorded as in part (C). To complete the proof, we may assume $K\ne 1$ and $K_{(B)}$ is intransitive on $B'$. Note that cases (ii) and (iii) in the statement of Proposition \ref{p:red} are recorded as parts (B) and (A) of Theorem \ref{thm:reduction}, respectively, and the groups in part (A) are determined by combining Theorem \ref{thm:semi} with Theorem \ref{t:innately}. Hence, we may assume $C = L$, so Lemmas \ref{l:C=L,KB=1,Xas} and \ref{l:C=L,KB=1,Xaffine} imply that $K_{(B)}\ne 1$, which gives part (D) of Theorem \ref{thm:reduction}.
	\end{proof}

	\section{Semiprimitive groups}
	
	\label{s:semi_prop}
	
	Recall that a permutation group $G$ is called \textit{semiprimitive} if every normal subgroup of $G$ is either transitive or semiregular. Following \cite[Definition 1.7]{MPR_semi}, a subgroup of $G$ is called an \textit{antiplinth} if it is maximal by inclusion among all intransitive normal subgroups of $G$. It is worth noting that if $G$ is semiprimitive, then $G$ has a unique antiplinth, which is the subgroup generated by all the intransitive normal subgroups of $G$. If $G$ is a rank $3$ semiprimitive permutation group under Hypothesis \ref{hypo:imprim}, then it is easy to see that the antiplinth of $G$ is $K = G_{(\calB)}$, acting regularly on $B$.
	
	Evidently, innately transitive groups are semiprimitive, and in view of Theorem \ref{t:innately}, we will consider the semiprimitive groups that are not innately transitive.
	
	\begin{lem}
		\label{l:semi}
		Assume Hypothesis \ref{hypo:imprim}, where $G$ is semiprimitive but not innately transitive, and $G$ has rank $3$ on $\Omega$. Let $B'\in\calB\setminus\{B\}$, $\beta\in B$ and $\beta'\in B'$.
		\begin{itemize}\addtolength{\itemsep}{0.2\baselineskip}
			\item[{\rm (i)}] $K$ is the unique minimal normal subgroup of $G$, and $K$ is the unique non-trivial intransitive normal subgroup of $G$.
			\item[{\rm (ii)}] Either $K\leqs Z(G)$ or $Z(G) = 1$.
			\item[{\rm (iii)}] The quotient group $(K{:}G_{\beta,\beta'})/K$ is a subgroup of $G_{B,B'}^\calB$ of index $|K|$.
			\item[{\rm (iv)}] $G^\calB_B$ is isomorphic to $G_\beta$.
			\item[{\rm (v)}] $G_B^B$ is affine and $\soc(G_B^B)\cong K$.
		\end{itemize}
	\end{lem}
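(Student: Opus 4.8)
The plan is to first extract the one structural fact that drives every part, namely the behaviour of $K$ on a single block. Since $G$ is semiprimitive but not innately transitive, no minimal normal subgroup is transitive, so every minimal normal subgroup is intransitive and hence semiregular; by the remarks preceding the lemma, $K = G_{(\calB)}$ is the antiplinth and acts regularly on $B$. I would record at once that this regularity forces $K_{(B)} = 1$, so the restriction map $K \to \mathrm{Sym}(B)$ is injective, $|K| = |B| = p^d$, and $K \cong K^B \normeq G_B^B = Y$ with $K^B$ regular; moreover $K$ is semiregular on $\Omega$, so $K_\beta = K_{\beta,\beta'} = 1$. These facts are the common engine for all five claims.

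Next I would prove (v). As $Y$ is $2$-transitive it is primitive, so its non-trivial normal subgroups are transitive, and the Burnside dichotomy says a $2$-transitive group carrying a regular normal subgroup is of affine type. Concretely, if $Y$ were almost simple then $\soc(Y) \leqs K^B$ would be transitive, hence equal to the regular group $K^B$, contradicting the fact that the socle of an almost simple $2$-transitive group is non-regular. Thus $Y = G_B^B$ is affine; writing $M = \soc(Y) \cong \bbZ_p^d$ we have $M \leqs K^B$ with $|M| = |B| = |K^B|$, so $K^B = M$ and therefore $\soc(G_B^B) = M \cong K$. I expect ruling out the almost simple case to be the main conceptual point, although it collapses quickly once $K^B$ is known to be regular.

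For (i), let $N$ be any minimal normal subgroup of $G$. Non-innate-transitivity makes $N$ intransitive, so $N \leqs K$ by Lemma \ref{l:norm_trans}, and $N \normeq G_B$ gives $N^B \normeq Y$. If $N^B = 1$ then $N \leqs K_{(B)} = 1$, a contradiction; hence $N^B \neq 1$, so $M = \soc(Y) \leqs N^B \leqs K^B = M$, forcing $N^B = K^B$. Since $K \to K^B$ is injective and $N \leqs K$, this yields $N = K$. Thus $K$ is the unique minimal normal subgroup; consequently $K$ lies in every non-trivial normal subgroup, and combined with the implication ``intransitive $\Rightarrow$ contained in $K$'' this identifies $K$ as the unique non-trivial intransitive normal subgroup. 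Part (ii) is then immediate: $Z(G) \normeq G$, so if $Z(G) \neq 1$ it contains the unique minimal normal subgroup $K$, i.e.\ $K \leqs Z(G)$.

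Finally I would settle (iv) and (iii) by quotient-and-index bookkeeping, using $K_\beta = 1$ and $|K| = |B|$, the only care being to note $K \leqs G_{B,B'}$ (true since $K$ fixes every block) so that $G_B/K \cong G_B^\calB$ and $G_{B,B'}/K \cong G_{B,B'}^\calB$. For (iv), the composite $G_\beta \hookrightarrow G_B \to G_B/K \cong G_B^\calB$ has kernel $G_\beta \cap K = K_\beta = 1$, and it is surjective because $|G_\beta K : G_\beta| = |K : K_\beta| = |K| = |B| = |G_B : G_\beta|$; hence $G_\beta \cong G_B^\calB$. For (iii), $K \cap G_{\beta,\beta'} = K_{\beta,\beta'} = 1$, so $K{:}G_{\beta,\beta'} = KG_{\beta,\beta'}$ really is a semidirect product and $(K{:}G_{\beta,\beta'})/K \cong G_{\beta,\beta'}$ embeds in $G_{B,B'}/K \cong G_{B,B'}^\calB$; the index is $|G_{B,B'} : KG_{\beta,\beta'}| = |G_{B,B'} : G_{\beta,\beta'}|/|K| = |B|^2/|B| = |B| = |K|$, where $|G_{B,B'} : G_{\beta,\beta'}| = |B|^2$ is the rank $3$ conclusion of Lemma \ref{l:DGLPP:l:4.2}. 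This completes all five parts.
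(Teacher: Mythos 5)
Your proposal is correct and follows essentially the same route as the paper: semiprimitivity plus non-innate-transitivity forces $K$ to be semiregular and regular on $B$, part (i) comes from Lemma \ref{l:norm_trans}, parts (iii) and (iv) from the splitting $G_B = K{:}G_\beta$ together with the index $|B|^2$ from Lemma \ref{l:DGLPP:l:4.2}, and part (v) from the regular normal image $K^B \normeq G_B^B$. The only cosmetic difference is in (i), where the paper identifies a minimal normal subgroup $M$ with $K$ by noting that the $M$-orbits form the unique block system $\calB$ (Lemma \ref{l:DGLPP:l:2.3}) and comparing with the regular action of $K$ on blocks, whereas you prove (v) first and deduce $N^B = \soc(G_B^B) = K^B$; both are valid and of comparable length.
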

	
	\begin{proof}
		Let $M$ be a minimal normal subgroup of $G$. Then $M$ is intransitive as $G$ is not innately transitive. This implies that $M\leqs K$ by Lemma \ref{l:norm_trans}. Note that the set of $M$-orbits on $\Omega$ is a non-trivial block system of $G$, which is no other than $\calB$ by Lemma \ref{l:DGLPP:l:2.3}. Hence, we have $M = K$ as $K$ is regular on each block, and so $K$ is the unique minimal normal subgroup of $G$. Let $N\ne 1$ be an intransitive normal subgroup of $G$. Then $N\leqs K$ by Lemma~\ref{l:norm_trans}. Thus, we have $N = K$ as $K$ is minimal normal, and this gives (i). If $Z(G)\ne 1$, then $Z(G)$ is a non-trivial normal subgroup of $G$, so $K\leqs Z(G)$, which implies (ii).
		
		Since $K\cap G_\beta = 1$, we have $G_B = KG_\beta = K{:}G_\beta$ is a semidirect product. By Lemma \ref{l:DGLPP:l:4.2}, this implies that $(K{:}G_{\beta,\beta'})/K\cong G_{\beta,\beta'}$ has index $|K| = |B|$ in $G_{B,B'}/K$. We obtain (iii) by noting that $K = G_{(\calB)}$. Moreover, we have $G_B^\calB = G_B/K\cong G_\beta$, which gives (iv).
		
		Finally, $K_{(B)} = 1$ implies that the image $\overline{K}$ of $K$ in $G_B^B = G_B/G_{(B)}$ is regular on $B$ and is isomorphic to $K$. Thus, $G_B^B$ is affine with socle $\overline{K}\cong K$.
	\end{proof}
	
	For the remainder of this section, we give an infinite family of rank $3$ insoluble semiprimitive groups, which extends \cite[Construction 4.2]{BDP_innately}. Let $X\in\{\GammaL, \GL,\mathrm{\Sigma L},\SL\}$. For convenience, we will simply write $X$ for the insoluble classical group $X_d(q)$ if $d$ and $q = p^f$ are clear from the context, where $p$ is a prime. Consider the natural action of $X$ on $\mathbb{F}_q^d\setminus\{0\}$. Note that $Z(\GL) \cong\bbZ_{q-1}$, which is the subgroup of scalar matrices. Let $C\cong \bbZ_{(q-1)/r}$ be a subgroup of $Z(\GL)$ and let $\Delta$ be the set of $C$-orbits on $\mathbb{F}_q^d\setminus\{0\}$. Then we may write
	\begin{equation}
	\label{e:Delta}
	\Delta=\{\overline{v}:v\in\mathbb{F}_q^d\setminus\{0\}\},
	\end{equation}
	where $\overline{v} =\la \lambda^{r}\ra v$ and $\lambda$ is a generator of $\mathbb{F}_q^\times$, with the action
	\begin{equation*}
	\overline{v}^{gC} = \overline{v^g}
	\end{equation*}
	for any $v\in \mathbb{F}_q^d\setminus\{0\}$ and $g\in \GammaL$. As a special case, if $r = 1$ then $\Delta$ is the set of $1$-subspaces of $\mathbb{F}_q^d$. Here we remark that \cite[Construction 4.2]{BDP_innately} only treats the case where $r$ is a prime divisor of $(q-1)/(d,q-1)$, and we will give more general arguments in this section.
	
	Let us start with the group $\GL$, and the following lemmas are basic exercises of linear algebra.
	
	\begin{lem}
		\label{l:GL_suborb}
		Let $v\in\mathbb{F}_q^d$ be a non-zero vector. Then the orbits of $\GL_v$ are
		\begin{equation}\label{e:GL_suborb}
		\{\lambda v\},\dots,\{\lambda^{q-1}v\},\mathbb{F}_q^d\setminus\la v\ra,
		\end{equation}
		where $\lambda$ is a generator of $\mathbb{F}_q^\times$.
	\end{lem}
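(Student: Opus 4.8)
The plan is to verify directly that the sets listed in~\eqref{e:GL_suborb} are exactly the $\GL_v$-orbits, by establishing three things in turn: each set is $\GL_v$-invariant (in fact the singletons are fixed points), the remaining set $\mathbb{F}_q^d\setminus\la v\ra$ is a single $\GL_v$-orbit, and together these sets partition $\mathbb{F}_q^d\setminus\{0\}$. As the paper notes, this is essentially a linear-algebra exercise, so no step presents a serious obstacle; the only point needing a little care is the transitivity argument.

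First I would record the elementary observation that, since every element of $\GL_v$ is a linear map fixing $v$, it fixes each scalar multiple of $v$: for $g\in\GL_v$ and any $i$ we have $(\lambda^i v)^g=\lambda^i(v^g)=\lambda^i v$. Because $\lambda$ generates $\mathbb{F}_q^\times$, the vectors $\lambda v,\dots,\lambda^{q-1}v$ are precisely the $q-1$ nonzero scalar multiples of $v$, and each is therefore a singleton $\GL_v$-orbit. This accounts for all nonzero vectors of $\la v\ra$.

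The substantive step is to show that $\GL_v$ is transitive on $W:=\mathbb{F}_q^d\setminus\la v\ra$. Given $w_1,w_2\in W$, neither lies in $\la v\ra$, so each of $\{v,w_1\}$ and $\{v,w_2\}$ is linearly independent. I would extend these to ordered bases $(v,w_1,u_3,\dots,u_d)$ and $(v,w_2,u_3',\dots,u_d')$ of $\mathbb{F}_q^d$, and let $g\in\GL_d(q)$ be the linear map determined by $v^g=v$, $w_1^g=w_2$ and $u_i^g=u_i'$. Sending one basis to another, $g$ is invertible, and it fixes $v$, so $g\in\GL_v$ with $w_1^g=w_2$; hence $W$ lies in a single $\GL_v$-orbit. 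Conversely $W$ is $\GL_v$-invariant, since any $g\in\GL_v$ maps $\la v\ra$ onto itself and therefore maps its complement onto itself, so $W$ is exactly one orbit.

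Finally I would observe that the $q-1$ singletons together with $W$ are pairwise disjoint and satisfy $(\la v\ra\setminus\{0\})\cup W=\mathbb{F}_q^d\setminus\{0\}$, so they exhaust the nonzero vectors and are precisely the $\GL_v$-orbits claimed in~\eqref{e:GL_suborb}. The only place to be watchful is in the basis-extension step, where one must ensure the chosen map is genuinely invertible (guaranteed by mapping a basis to a basis) and that it fixes the vector $v$ itself rather than merely the line $\la v\ra$.
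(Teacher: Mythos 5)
Your proof is correct and is essentially the argument the paper intends: Lemma \ref{l:GL_suborb} is stated without proof (the paper calls these lemmas ``basic exercises of linear algebra''), and your verification — the singletons are fixed points by linearity, and transitivity on $\mathbb{F}_q^d\setminus\la v\ra$ follows by extending $\{v,w_1\}$ and $\{v,w_2\}$ to bases and mapping one basis to the other while fixing $v$ — is the standard one. It is also the same basis-extension idea the paper uses explicitly in its proof of Lemma \ref{l:SL_suborb}, where the determinant-$1$ constraint forces extra care that is unnecessary in the $\GL$ setting here.
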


	\begin{lem}
		\label{l:m.PGL_suborb}
		Let $\overline{v}\in\Delta$. Then the orbits of $\overline{\GL}_{\overline{v}}$ are
		\begin{equation}\label{e:m.GL_suborb}
		\{\lambda \overline{v}\},\dots,\{\lambda^{r}\overline{v}\}, \{\overline{w}:w\notin \la v\ra\},
		\end{equation}
		where $\lambda$ is a generator of $\mathbb{F}_q^\times$.
	\end{lem}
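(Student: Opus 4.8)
The plan is to compute the orbits of $\overline{\GL}_{\overline{v}}$ directly, using Lemma~\ref{l:GL_suborb} as the starting point and tracking how passing to $C$-orbits merges and collapses the $\GL$-orbit structure.

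First I would fix notation: write $\overline{v} = \la \lambda^r\ra v$ for the point of $\Delta$ corresponding to a non-zero vector $v$, where $C = \la \lambda^r\ra$ acts as scalar multiplication. The key observation is the relationship between the stabiliser $\overline{\GL}_{\overline{v}}$ in the induced action and the stabiliser $\GL_v$ in the natural action. Since $\overline{v}^{gC} = \overline{v^g}$, an element $g$ fixes $\overline{v}$ precisely when $v^g \in \la\lambda^r\ra v$, that is, when $g$ maps $v$ into its own $C$-orbit. Thus $\overline{\GL}_{\overline{v}}$ is the image in $\overline{\GL} = \GL/C$ of the setwise stabiliser in $\GL$ of the set $\{\lambda^{jr}v : j\}$, which strictly contains $\GL_v$.

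Next I would determine the orbits. The vectors outside $\la v\ra$ form a single $\GL_v$-orbit by Lemma~\ref{l:GL_suborb}, and this orbit is preserved under scaling, so its image $\{\overline{w} : w\notin\la v\ra\}$ is a single orbit of $\overline{\GL}_{\overline{v}}$; one checks it is genuinely one orbit since the larger stabiliser can only fuse orbits, never split them. For the vectors inside $\la v\ra$, the $\GL_v$-orbits are the singletons $\{\lambda^i v\}$ for $i = 1,\dots,q-1$. Passing to $\Delta$, the points $\overline{\lambda^i v}$ for $i = 1,\dots,q-1$ collapse modulo $C = \la\lambda^r\ra$ to exactly $r$ distinct points, namely $\overline{\lambda v},\dots,\overline{\lambda^r v}$ (equivalently $\lambda\overline{v},\dots,\lambda^r\overline{v}$). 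The remaining task is to verify that each of these $r$ points is a singleton orbit under $\overline{\GL}_{\overline{v}}$: an element fixing $\overline{v}$ scales $v$ by some power of $\lambda$, hence scales every $\lambda^i v$ by the same power, so it permutes the points $\lambda^j\overline{v}$ by the same fixed shift; but since it fixes $\overline{v} = \lambda^0\overline{v}$, that shift is trivial and each $\lambda^j\overline{v}$ is individually fixed. This yields exactly the orbits displayed in~\eqref{e:m.GL_suborb}.

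The main obstacle, such as it is, lies in the bookkeeping of the last step: one must confirm that the action of $\overline{\GL}_{\overline{v}}$ on the set $\{\lambda\overline{v},\dots,\lambda^r\overline{v}\}$ is trivial rather than merely transitive, and this is exactly where the hypothesis that $g$ fixes $\overline{v}$ (not just the line $\la v\ra$) is used. The rest is the routine verification that collapsing the $q-1$ singletons modulo an index-$r$ subgroup of the scalar group produces precisely $r$ classes, which follows since $\la\lambda^r\ra$ has index $r$ in $\mathbb{F}_q^\times = \la\lambda\ra$.
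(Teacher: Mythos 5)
Your proof is correct and is precisely the argument the paper intends: the paper states this lemma without proof, introducing it (together with Lemma \ref{l:GL_suborb}) as a basic exercise in linear algebra, and your direct verification --- identifying $\overline{\GL}_{\overline{v}}$ as the image of $\GL_v C$, noting that any $g$ with $v^g\in\la\lambda^r\ra v$ scales all of $\la v\ra$ by an element of $C$ and hence fixes each $\lambda^j\overline{v}$, while the image of the single $\GL_v$-orbit $\mathbb{F}_q^d\setminus\la v\ra$ stays one orbit --- is the standard route from Lemma \ref{l:GL_suborb}. The only trivial quibble is that the stabiliser contains $\GL_v$ strictly only when $C\ne 1$ (i.e.\ $r<q-1$), but your argument nowhere uses strictness.
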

	
	In particular, $\overline{\GL}$ has rank $3$ if and only if $r = 2$. Now let $\{v_1,\dots,v_d\}$ be a basis of $\mathbb{F}_q^d$ with $v = v_1$, and we treat the group $\GammaL = \la \GL,\phi\ra$. Here $\phi$ is a field automorphism of order $f$ such that
	\begin{equation*}
	(a_1v_1+\cdots +a_dv_d)^\phi = a_1^pv_1+\cdots +a_d^pv_d
	\end{equation*}
	for any $a_1,\dots,a_d\in\mathbb{F}_q$. Let $\overline{\phi}$ be the image of $\phi\in \GammaL$ in $\overline{\GammaL}$. Note that $|\overline{\phi}| = |\phi| = f$.
	
	\begin{lem}
		\label{l:phi_orb}
		We have $(\lambda^\ell \overline{v})^{\overline{\phi}^t} = \lambda^k\overline{v}$ if and only if $k\equiv \ell p^t+sr\pmod{q-1}$ for some $s$.
	\end{lem}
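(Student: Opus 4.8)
The plan is to verify the claim by a direct computation in the quotient group $\overline{\GammaL} = \GammaL/C$ acting on $\Delta$. First I would unwind the notation set up in \eqref{e:Delta}: by definition $\lambda^\ell\overline{v} = \overline{\lambda^\ell v}$, while the defining action $\overline{w}^{\,gC} = \overline{w^g}$ gives $(\lambda^\ell\overline{v})^{\overline{\phi}^t} = \overline{(\lambda^\ell v)^{\phi^t}}$. Thus the whole statement reduces to identifying the single vector $(\lambda^\ell v)^{\phi^t}$ up to the action of $C = \langle\lambda^r\rangle$.

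Next I would compute this vector explicitly, and this is the only substantive step. The key point is the choice of basis $\{v_1,\dots,v_d\}$ with $v = v_1$: since $\phi$ fixes each basis vector $v_i$ and raises coordinates to the $p$-th power, we have $v^{\phi^t} = v$ and hence $(\lambda^\ell v)^{\phi^t} = (\lambda^\ell)^{p^t}v = \lambda^{\ell p^t}v$. This is precisely where the convention $v = v_1$ does the work, since for a general vector $\phi$ need not fix the line through $v$.

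Finally I would compare $C$-orbits. The equality $(\lambda^\ell\overline{v})^{\overline{\phi}^t} = \lambda^k\overline{v}$ unwinds to $\overline{\lambda^{\ell p^t}v} = \overline{\lambda^k v}$, which holds exactly when $\lambda^{\ell p^t}v$ and $\lambda^k v$ lie in the same $C$-orbit, that is, when $\lambda^k v = \lambda^{sr}\cdot\lambda^{\ell p^t}v$ for some integer $s$. As $v\neq 0$ and the scalar group acts freely on nonzero vectors, this is equivalent to $\lambda^k = \lambda^{\ell p^t + sr}$, and since $\lambda$ has multiplicative order $q-1$ this holds if and only if $k\equiv \ell p^t + sr\pmod{q-1}$, as required. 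I expect no genuine obstacle here: the argument is a short bookkeeping computation, and the only point requiring care is the convention $v = v_1$ that renders $v$ fixed by the field automorphism $\phi$.
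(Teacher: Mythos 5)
Your proposal is correct and follows essentially the same route as the paper: both compute $(\lambda^\ell\overline{v})^{\overline{\phi}^t} = \lambda^{\ell p^t}\overline{v}$ directly and then compare $C$-orbits to obtain the congruence modulo $q-1$. Your version merely makes explicit two points the paper leaves implicit, namely that the convention $v = v_1$ forces $v^{\phi^t} = v$, and that the scalar action is free on nonzero vectors.
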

	
	\begin{proof}
		Note that
		\begin{equation*}
		(\lambda^\ell\overline{v})^{\overline{\phi}^t} = \lambda^{\ell p^t}\overline{v} = \lambda^{\ell p^t}\la \lambda^r\ra v.
		\end{equation*}
		Hence, $(\lambda^\ell \overline{v})^{\overline{\phi}^t} = \lambda^k\overline{v}$ if and only if $\lambda^k v = \lambda^{\ell p^t+sr}v$ for some $s$, as desired.
	\end{proof}

	\begin{prop}
		\label{p:m.PGammaL_suborb}
		The group $\overline{\GammaL}$ has rank $3$ on $\Delta$ if and only if $r$ is a primitive prime divisor of $p^{r-1}-1$.
	\end{prop}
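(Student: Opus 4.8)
The plan is to compute the rank of $\overline{\GammaL}$ as the number of orbits of the point stabiliser $\overline{\GammaL}_{\overline v}$ on $\Delta$, starting from the orbit description for the subgroup $\overline{\GL}_{\overline v}$ supplied by Lemma~\ref{l:m.PGL_suborb}. Those orbits are the $r$ singletons $\{\overline v\},\{\lambda\overline v\},\dots,\{\lambda^{r-1}\overline v\}$ lying on the line $\langle v\rangle$, together with the single large orbit $\Gamma_0=\{\overline w: w\notin\langle v\rangle\}$. Since $\overline{\GL}_{\overline v}\leqs\overline{\GammaL}_{\overline v}$, and since any element of $\overline{\GammaL}_{\overline v}$ fixes $\overline v$ and hence preserves $\langle v\rangle$ (note that $\overline{v^g}=\overline v$ forces $v^g\in\langle v\rangle$), the large orbit $\Gamma_0$ remains a single $\overline{\GammaL}_{\overline v}$-orbit, disjoint from the points on $\langle v\rangle$, while $\{\overline v\}$ is always a trivial orbit. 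Thus the rank is $\geqs 3$ whenever $r\geqs 2$, and $\overline{\GammaL}$ has rank exactly $3$ on $\Delta$ if and only if $r\geqs 2$ and $\overline{\GammaL}_{\overline v}$ acts transitively on the remaining $r-1$ points $\{\lambda\overline v,\dots,\lambda^{r-1}\overline v\}$ (the excluded case $r=1$ gives the $2$-transitive action on $1$-subspaces, of rank $2$).

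Next I would pin down the action of $\overline{\GammaL}_{\overline v}$ on the $r$ points of $\langle v\rangle$. Because $v=v_1$ is a basis vector, $\phi$ fixes $v$, so $\overline\phi\in\overline{\GammaL}_{\overline v}$; using the split extension $\overline{\GammaL}=\overline{\GL}\rtimes\langle\overline\phi\rangle$ one deduces $\overline{\GammaL}_{\overline v}=\overline{\GL}_{\overline v}\langle\overline\phi\rangle$. As $\overline{\GL}_{\overline v}$ fixes each point $\lambda^\ell\overline v$ by Lemma~\ref{l:m.PGL_suborb}, the induced action on these $r$ points factors through $\langle\overline\phi\rangle$. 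Identifying $\lambda^\ell\overline v$ with the class of $\ell$ in $\bbZ/r\bbZ$ (legitimate since $\lambda^\ell\overline v=\lambda^{\ell'}\overline v$ exactly when $\ell\equiv\ell'\imod{r}$), Lemma~\ref{l:phi_orb} together with $r\mid q-1$ gives $k\equiv\ell p\imod{r}$, so $\overline\phi$ sends the point indexed by $\ell$ to the point indexed by $p\ell$. Hence the permutation group induced by $\overline{\GammaL}_{\overline v}$ on $\bbZ/r\bbZ$ is precisely the cyclic group $\langle p\rangle$ acting by multiplication.

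The final step reduces to an elementary number-theoretic statement: the multiplicative action of $\langle p\rangle$ on the nonzero residues modulo $r$ is transitive if and only if $r$ is a primitive prime divisor of $p^{r-1}-1$. If $r$ is composite, then (as $r\mid p^f-1$ forces $\gcd(p,r)=1$) multiplication by $p$ preserves the partition of $\bbZ/r\bbZ$ according to the greatest common divisor with $r$; since the nonzero residues contain both units and proper zero-divisors, the action cannot be transitive, so $r$ must be prime. If $r$ is prime, the nonzero residues form the cyclic group $(\bbZ/r\bbZ)^\times$ of order $r-1$, on which $\langle p\rangle$ is transitive precisely when $p$ generates it, i.e. when the multiplicative order of $p$ modulo $r$ equals $r-1$. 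As $r$ is prime with $r\nmid p$, this is exactly the condition that $r\mid p^{r-1}-1$ while $r\nmid p^m-1$ for all $0<m<r-1$, namely that $r$ is a primitive prime divisor of $p^{r-1}-1$. Combining the three steps yields the stated equivalence.

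The main obstacle is this last paragraph: extracting from transitivity both that $r$ must be prime and that $p$ must be a primitive root modulo $r$, and recognising the combined condition as the primitive-prime-divisor statement. A secondary point requiring care is the justification that the stabiliser's action on $\langle v\rangle$ factors \emph{exactly} through $\langle p\rangle\leqs(\bbZ/r\bbZ)^\times$; for this the split structure $\overline{\GammaL}=\overline{\GL}\rtimes\langle\overline\phi\rangle$, the fact that $\overline\phi$ fixes $\overline v$, and the triviality of $\overline{\GL}_{\overline v}$ on the line (Lemma~\ref{l:m.PGL_suborb}) are all essential, as is the reduction of the congruence in Lemma~\ref{l:phi_orb} modulo $r$ using $r\mid q-1$.
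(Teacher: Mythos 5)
Your proof is correct and takes essentially the same route as the paper: both start from the $\overline{\GL}_{\overline{v}}$-orbits in Lemma \ref{l:m.PGL_suborb}, use Lemma \ref{l:phi_orb} and $r\mid q-1$ to reduce the orbit condition to the congruence $1\equiv \ell p^t\pmod r$, deduce that $r$ is prime from the forced coprimality of every $\ell\in\{1,\dots,r-1\}$ to $r$ (your units-versus-zero-divisors phrasing is the same fact), and identify transitivity with $p$ having order $r-1$ modulo $r$, i.e.\ with $r$ being a primitive prime divisor of $p^{r-1}-1$. Your explicit description of the induced action on the line points as multiplication by $p$ on $\bbZ/r\bbZ$ is a tidy repackaging of the paper's congruence manipulation, not a different argument.
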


	\begin{proof}
		First assume $\overline{\GammaL}$ is of rank $3$. By Lemmas \ref{l:m.PGL_suborb} and \ref{l:phi_orb}, it is easy to see that $\{\lambda^r \overline{v}\}$ and $\{\overline{w}:w\notin\la v\ra\}$ are two $\overline{\GammaL}_{\overline{v}}$-orbits. In view of Lemma \ref{l:phi_orb}, for any $\ell\in\{1,\dots,r-1\}$, there exist integers $s$ and $t$ such that
		\begin{equation}\label{e:mod}
		1\equiv \ell p^t+sr\pmod{q-1}.
		\end{equation}
		In particular, as $r$ divides $q-1$, we have $(\ell,r) = 1$ for every $\ell\in\{1,\dots,r-1\}$, which implies that $r$ is prime. Note that $p^{r-1}\equiv 1\pmod r$ and \eqref{e:mod} implies that
		\begin{equation*}
		1\equiv \ell p^t\pmod r.
		\end{equation*}
		That is, for any $\ell\in\{1,\dots,r-1\}$, there exists a unique $t\in\{0,\dots,r-2\}$ such that $1\equiv \ell p^t\pmod r$. Hence, $1\not \equiv p^t\pmod r$ for any $t\in\{0,\dots,r-2\}$, and so $r$ is a primitive prime divisor of $p^{r-1}-1$, as desired.
		
		Conversely, let $\ell\in\{1,\dots,r-1\}$ and assume that $r$ is a primitive prime divisor of $p^{r-1}-1$. Then there exists $t\in\{0,\dots,r-2\}$ such that $1\equiv \ell p^t\pmod r$. As $r$ divides $q-1$, there exists an integer $s$ such that \eqref{e:mod} holds. By Lemma \ref{l:phi_orb},  $\lambda\overline{v}$ and $\lambda^\ell\overline{v}$ are in a same $\overline{\GammaL}_{\overline{v}}$-orbit. Therefore, the orbits of $\overline{\GammaL}_{\overline{v}}$ are
		\begin{equation*}
		\{\overline{v}\},\{\lambda\overline{v},\dots,\lambda^{r-1}\overline{v}\},\{\overline{w}:w\notin\la v\ra\}.
		\end{equation*}
		This completes the proof.
	\end{proof}

	With this in mind, we may assume that $r$ is a primitive prime divisor of $p^{r-1}-1$ from now on (in particular, we have $r-1\mid f$). We turn to the group $\SL$.
	
	\begin{lem}
		\label{l:SL_suborb}
		Let $v\in\mathbb{F}_q^d$ be a non-zero vector. If $d>2$, then $\SL_v$ and $\GL_v$ have the same set of orbits as presented in \eqref{e:GL_suborb}. If $d = 2$, then the orbits of $\SL_v$ are
		\begin{equation*}
		\{\lambda v\},\dots,\{\lambda^{q-1}v\}, \lambda w+\la v\ra,\dots,\lambda^{q-2}w+\la v\ra,
		\end{equation*}
		where $w\in\mathbb{F}_q^2\setminus\la v\ra$ and $\lambda$ is a generator of $\mathbb{F}_q^\times$.
	\end{lem}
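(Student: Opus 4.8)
The plan is to fix a basis $\{v_1,\dots,v_d\}$ of $\mathbb{F}_q^d$ and reduce to the case $v=v_1$. This is harmless: since $\GL_d(q)$ is transitive on non-zero vectors, conjugating by a suitable element carries $\SL_u$ to $\SL_v$ and sends orbits to orbits, so the orbit pattern of $\SL_v$ is the same for every non-zero $v$. With $v=v_1$, an element of $\SL_v$ fixes $v_1$ and stabilises $\la v_1\ra$, and with respect to the decomposition $\mathbb{F}_q^d=\la v_1\ra\oplus U$ with $U=\la v_2,\dots,v_d\ra$ it has the block shape $\bigl(\begin{smallmatrix}1 & b\\ 0 & A\end{smallmatrix}\bigr)$; the determinant condition forces $A\in\SL_{d-1}(q)$. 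Thus $\SL_v\cong[q^{d-1}]{:}\SL_{d-1}(q)$, where the normal $[q^{d-1}]$ consists of the unipotent matrices $\bigl(\begin{smallmatrix}1 & b\\ 0 & I\end{smallmatrix}\bigr)$ and the Levi factor is a copy of $\SL_{d-1}(q)$ acting naturally on $U$.

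Next I would read off the action on an arbitrary vector. The scalar multiples $\mu v$ are fixed pointwise (from $Av=v$ one gets $A(\mu v)=\mu v$), giving the $q-1$ singletons $\{\lambda v\},\dots,\{\lambda^{q-1}v\}$ in both cases. For $u\notin\la v\ra$ write $u=xv_1+y$ with $0\ne y\in U$; then the element above sends $u$ to $(x+b\cdot y)v_1+Ay$. Here the unipotent part adds an arbitrary element of $\mathbb{F}_q$ to the $v_1$-coordinate, because $y\ne 0$ makes $b\mapsto b\cdot y$ a surjective functional, while the Levi factor transforms $y$ by the full group $\SL_{d-1}(q)$. Hence the $\SL_v$-orbit of $u$ is exactly $\{x'v_1+y':x'\in\mathbb{F}_q,\ y'\in y^{\SL_{d-1}(q)}\}$, so everything is controlled by the $\SL_{d-1}(q)$-orbit of $y$.

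The dichotomy then falls out of the transitivity of $\SL_{d-1}(q)$ on $\mathbb{F}_q^{d-1}\setminus\{0\}$. If $d>2$ then $d-1\geqs 2$ and this action is transitive, so $y^{\SL_{d-1}(q)}=\mathbb{F}_q^{d-1}\setminus\{0\}$ and the orbit of $u$ is all of $\mathbb{F}_q^d\setminus\la v\ra$; combined with the singletons this is precisely the $\GL_v$-orbit list of \eqref{e:GL_suborb}. If $d=2$ then $\SL_{d-1}(q)=\SL_1(q)=1$ fixes $y$, so each non-zero value of the $v_2$-coordinate yields a separate orbit, namely the coset $\mu w+\la v\ra$ with $w=v_2$ as $\mu$ ranges over $\mathbb{F}_q^\times$; these are the $q-1$ cosets recorded in the statement. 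The only non-formal input is the standard transitivity of $\SL_m(q)$ on non-zero vectors for $m\geqs 2$, which I would quote; the rest is the block computation. Indeed the whole content of the lemma is that this transitivity holds exactly when $d-1\geqs 2$, and its failure at $m=1$ is what produces the extra orbits in the $d=2$ case.
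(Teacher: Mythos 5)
Your proof is correct, but it takes a genuinely different route from the paper's. The paper argues with explicit elements: for $d>2$ it splits into the cases where $w$ lies in $\langle v,u\rangle$ or where $\{v,u,w\}$ is linearly independent, extends to a basis in each case, and writes down an element of $\SL_v$ carrying $u$ to $w$ (using a third basis vector $v_3$ to absorb the determinant correction $\mu_2^{-1}$, which is precisely where $d\geqs 3$ enters); for $d=2$ it observes that any $x\in\SL_v$ fixes $\langle v\rangle$ pointwise and satisfies $w^x=w+\mu v$, so the non-trivial affine cosets of $\langle v\rangle$ are unions of orbits and in fact single orbits. You instead compute the full stabiliser in block form, $\SL_v\cong [q^{d-1}]{:}\SL_{d-1}(q)$, read off the action on $u=xv_1+y$ as $(x+b\cdot y)v_1+Ay$, and reduce both cases to one quoted fact: transitivity of $\SL_m(q)$ on non-zero vectors for $m\geqs 2$, and its trivial failure at $m=1$. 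Your approach is uniform where the paper's is case-by-case, and it isolates the structural reason for the $d=2$ exception (the Levi factor degenerates to $\SL_1(q)=1$); the trade-off is that the quoted transitivity of $\SL_{d-1}(q)$ is itself usually proved by exactly the kind of explicit-element construction the paper carries out in place, so the two arguments have comparable total content. Your individual steps all check out: the reduction to $v=v_1$ is valid because $\SL$ is normal in $\GL$, so conjugation sends $\SL_v$ to $\SL_{v^g}$ and orbits to orbits; the surjectivity of $b\mapsto b\cdot y$ for $y\neq 0$ is correct; and your description of the orbit of $u$ is both reachable and invariant, hence exact. One small remark: your count of $q-1$ coset orbits in the $d=2$ case is the correct one, and the displayed list in the lemma, running only from $\lambda w+\langle v\rangle$ to $\lambda^{q-2}w+\langle v\rangle$, appears to have an off-by-one omission of the coset $\lambda^{q-1}w+\langle v\rangle = w+\langle v\rangle$.
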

	
	\begin{proof}
		First assume $d>2$ and let $u,w\in \mathbb{F}_q^d\setminus\la v\ra$. If $w = \mu_1v+\mu_2 u$ for some $\mu_1,\mu_2\in\mathbb{F}_q^\times$ then we extend $\{v,u\}$ to a basis $\{v,u,v_3,\dots,v_d\}$ of $\mathbb{F}_q^d$, noting that there exists $x\in \SL_v$ such that $u^x = w = \mu_1v+\mu_2u$, $v_3^x = \mu_2^{-1}v_3$ and $v_i^x = v_i$ for $i\geqs 4$. Similarly, if $\{v,u,w\}$ is linearly independent, then we extend it to a basis $\{v,u,w,v_4,\dots,v_d\}$ of $\mathbb{F}_q^d$. In this setting, there exists $x\in \SL_v$ such that $u^x = w$, $w^x = -u$, $v_i^x = v_i$ for $i\geqs 4$. Thus, $\mathbb{F}_q^d\setminus\la v\ra$ is an orbit of $\SL_v$. Note that $\{\lambda v\},\dots,\{\lambda^{q-1} v\}$ are clearly $\SL_v$-orbits.
		
		To complete the proof, we assume $d = 2$. Let $x\in \SL_v$ and $w\in\mathbb{F}_q^2\setminus\la v\ra$. Once again, it is clear that $x$ fixes $\langle v\rangle$ pointwise. Since $x\in\SL$, we have $w^x = w+\mu v$ for some $\mu\in\mathbb{F}_q^2$. It follows that if $u = a_1v+a_2w$ for some  $a_2\ne 0$, then
		\begin{equation*}
		u^x = (a_1v+a_2w)^x = a_1v+a_2w+a_2\mu v = u+a_2\mu v.
		\end{equation*}
		Thus, $u$ and $u^x$ are in the same affine subset of $\la v\ra$. It is easy to show that any pair of elements in a same non-trivial affine subset are in the same orbit of $\SL_v$.
	\end{proof}

	\begin{lem}
		\label{l:semi_innately}
		Suppose $\overline{\SL}\leqs G\leqs \overline{\GammaL}$ and $G$ has rank $3$ on $\Delta$. Then $G$ is semiprimitive. Moreover, $G$ is not innately transitive if and only if $r\mid (d,q-1)$ and $r\nmid \frac{q-1}{(d,q-1)}$.
	\end{lem}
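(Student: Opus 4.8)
The plan is to treat $\overline{\SL}=\SL C/C$ as the decisive normal subgroup of $G$ and to read off both conclusions from its structure. First I would record three facts. Since $\SL\normeq\GammaL$ and $C\normeq\GammaL$ (the scalar subgroup $C=\la\lambda^rI\ra$ is preserved by the field automorphism $\phi$), the image $\overline{\SL}$ is normal in $\overline{\GammaL}$, hence in $G$. As $(d,q)\ne(2,3)$ and $\SL_d(q)$ is insoluble, $\SL$ is perfect, so $\overline{\SL}=\SL/(\SL\cap C)$ is a central quotient of $\SL$ with $\overline{\SL}/Z(\overline{\SL})\cong\PSL_d(q)$; thus $\overline{\SL}$ is quasisimple and its only proper normal subgroups are central. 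Finally, $\SL_d(q)$ is transitive on $\mathbb{F}_q^d\setminus\{0\}$ for $d\geqs 2$, so $\overline{\SL}$ is transitive on $\Delta$.

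For semiprimitivity, let $N\normeq G$ and consider $N\cap\overline{\SL}\normeq\overline{\SL}$. By quasisimplicity, either $\overline{\SL}\leqs N$, in which case $N$ is transitive because $\overline{\SL}$ is; or $N\cap\overline{\SL}\leqs Z(\overline{\SL})$, in which case Lemma \ref{l:perfect} (applied to the perfect normal subgroup $\overline{\SL}$ and to $N$) gives $[\overline{\SL},N]=1$, so $N\leqs C_G(\overline{\SL})$. Here I would invoke the standard fact that the centraliser of a transitive normal subgroup is semiregular: if $c\in C_G(\overline{\SL})$ fixes $\overline{v}$, then $c$ fixes $\overline{v}^s$ for every $s\in\overline{\SL}$, and hence fixes $\Delta$ pointwise. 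Thus $N$ is semiregular, and $G$ is semiprimitive.

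For the second claim I would show that $G$ is innately transitive if and only if $\overline{\SL}$ is a minimal normal subgroup of $G$, equivalently $Z(\overline{\SL})=1$. If $Z(\overline{\SL})=1$ then $\overline{\SL}\cong\PSL_d(q)$ is simple, hence minimal normal and transitive, so $G$ is innately transitive. Conversely, let $M$ be a transitive minimal normal subgroup. If $M\cap\overline{\SL}=1$ then $M\leqs C_G(\overline{\SL})$, which is semiregular of order at most $|Z(\overline{\GL})|=r$; here I would verify $C_{\overline{\GammaL}}(\overline{\SL})=Z(\overline{\GL})\cong\bbZ_r$, the delicate point being that a field automorphism $\overline{\phi}^{\,t}$ with $t\not\equiv 0$ cannot act on $\overline{\SL}$ as conjugation by an element of $\overline{\GL}$. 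Since $|\Delta|=r(q^d-1)/(q-1)>r$, such an $M$ cannot be transitive, so $M\leqs\overline{\SL}$; minimality together with quasisimplicity then forces $M=\overline{\SL}$ and $Z(\overline{\SL})=1$.

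It remains to convert $Z(\overline{\SL})\ne 1$ into the stated arithmetic condition. Writing $\overline{\SL}=\SL/(Z(\SL)\cap C)$, we have $Z(\overline{\SL})=Z(\SL)/(Z(\SL)\cap C)$, where $Z(\SL)$ and $C$ are the subgroups of $Z(\GL)\cong\bbZ_{q-1}$ of orders $(d,q-1)$ and $(q-1)/r$. In the cyclic group $\bbZ_{q-1}$ these intersect in a subgroup of order $\gcd((d,q-1),(q-1)/r)$, so $Z(\overline{\SL})\ne 1$ exactly when $(d,q-1)\nmid(q-1)/r$. Comparing $r$-parts (recall $r\mid q-1$), this is equivalent to $((d,q-1))_r=(q-1)_r$, which in turn is equivalent to $r\mid(d,q-1)$ together with $r\nmid\frac{q-1}{(d,q-1)}$, as required. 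The main obstacle is the centraliser computation in the third paragraph — ruling out a field-automorphism contribution to $C_{\overline{\GammaL}}(\overline{\SL})$ — after which both statements follow from the elementary arithmetic of the cyclic scalar group.
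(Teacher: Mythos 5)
Your proof is correct, but it takes a partly different route from the paper's, and the comparison is instructive. For semiprimitivity the paper exploits the rank-$3$ hypothesis: by Lemma \ref{l:DGLPP:l:2.3} the block system induced by $1$-subspaces is the unique non-trivial one, its kernel is $K=Z(\overline{\GL})\cong\bbZ_r$, which is semiregular, and Lemma \ref{l:norm_trans} then forces every normal subgroup to be transitive or contained in $K$. Your argument --- quasisimplicity of $\overline{\SL}$, Lemma \ref{l:perfect}, and semiregularity of the centraliser of a transitive subgroup --- never invokes rank $3$, so it proves semiprimitivity for \emph{every} $G$ with $\overline{\SL}\leqs G\leqs\overline{\GammaL}$, which is more general than what the paper establishes. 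For the innate transitivity criterion, both proofs reduce to the same dichotomy ($Z(\overline{\SL})=1$, equivalently $Z(\SL)\leqs C$) and to the same arithmetic in the cyclic group $Z(\GL)$; your $r$-part comparison and the paper's computation $\overline{\SL}=\SL/(C\cap Z(\SL))\cong\PSL_d(q)$ iff $r\mid\frac{q-1}{(d,q-1)}$ are equivalent. The one genuine divergence is how a transitive minimal normal subgroup $M$ with $M\cap\overline{\SL}=1$ is excluded: the paper passes to $G^\calB=G/K$, whose socle is $\PSL_d(q)$, identifies $M\cong\PSL_d(q)$ and then $M=\overline{\SL}$, so it never needs the centraliser $C_{\overline{\GammaL}}(\overline{\SL})$; you instead compute $C_{\overline{\GammaL}}(\overline{\SL})=Z(\overline{\GL})$, correctly flagged as the delicate step. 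That computation does go through: an element centralising $\overline{\SL}$ acts trivially on $\overline{\SL}/Z(\overline{\SL})\cong\PSL_d(q)$, non-trivial field automorphisms induce non-trivial (outer) automorphisms of $\PSL_d(q)$, and Schur's lemma then places the element in $Z(\GL)$ --- alternatively one lifts by perfectness exactly as in the proof of Lemma \ref{l:perfect}. You could in fact avoid it entirely: such an $M$ would be semiregular (by your own second paragraph) and transitive, hence regular, and then $\overline{\SL}\leqs C_G(M)$ would be semiregular, which is absurd since $|\overline{\SL}|\geqs|\PSL_d(q)|>|\Delta|$. One point worth making explicit: your final step compares $r$-parts, which presupposes that $r$ is prime; in context this is the section's standing assumption, justified because rank $3$ of $G$ forces rank $3$ of $\overline{\GammaL}$, whence $r$ is a primitive prime divisor of $p^{r-1}-1$ by Proposition \ref{p:m.PGammaL_suborb}.
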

	
	\begin{proof}
		Since $G$ has rank $3$ on $\Delta$, it satisfies Hypothesis \ref{hypo:imprim} (with $\Delta = \Omega$) and $G$ has a unique non-trivial block system $\calB$, which is induced by $1$-subspaces of $\mathbb{F}_q^d$. It follows that $K = G_{(\calB)} = Z(\overline{\GL})\cong \bbZ_r$, which is semiregular on $\Delta$.
		Thus, $G$ is semiprimitive by Lemma \ref{l:norm_trans}.
		
		Suppose $G$ is not innately transitive on $\Delta$. Note that $\overline{\SL}\normeq G$ is transitive on $\Delta$, so $Z(\overline{\GL}) = K  < \overline{\SL}$ (otherwise $\overline{\SL}$ is a transitive minimal normal subgroup) and thus $Z(\overline{\SL}) = Z(\overline{\GL})$. This implies that $r\mid (d,q-1)$. Write $Z(\GL) = \la x\ra$, so $C = \la x^r\ra$ and $Z(\SL) = \la x^{\frac{q-1}{(d,q-1)}}\ra$. If $r\mid \frac{q-1}{(d,q-1)}$, then
		\begin{equation*}
		\overline{\SL} = \SL/(C\cap \SL) = \SL/(C\cap Z(\SL)) = \SL/Z(\SL)\cong \PSL_d(q),
		\end{equation*}
		which is incompatible with $Z(\overline{\GL})<\overline{\SL}$. Hence, $r\nmid \frac{q-1}{(d,q-1)}$.
		
		Finally, suppose $G$ is innately transitive and let $N$ be a transitive minimal normal subgroup of $G$. Then $K\cap N = 1$ and $N/(K\cap N)\cong N$ is a normal subgroup of $G/K = G^\calB$. Note that $\soc(G^\calB) = \PSL_d(q)$, so $N\cong \PSL_d(q)$. This yields $\PSL_d(q)\cong N= \overline{\SL}$. By arguing as above, we have $Z(\SL)\leqs C$ and so $r\mid \frac{q-1}{(d,q-1)}$.
	\end{proof}

	Now consider the suborbits of $\overline{\SL} = \SL/(C\cap \SL)\leqs \mathrm{Sym}(\Delta)$. Here we exclude the cases where $d = 2$ and $r\ne2$ (as noted in Lemma \ref{l:semi_innately}, if $G$ has rank $3$ on $\Delta$ then $G$ is innately transitive in this setting, and we refer the reader to \cite[Proposition 4.3]{BDP_innately} for details).
	
	\begin{lem}
		\label{l:m.PSL_suborb}
		Let $\overline{v}\in\Delta$. If $d \geqs 3$, then $\overline{\SL}_{\overline{v}}$ and $\overline{\GL}_{\overline{v}}$ have the same set of orbits as presented in \eqref{e:m.GL_suborb}. If $d = 2$, $r =2$ and $p$ is odd, then the orbits of $\overline{\SL}_{\overline{v}}$ are
		\begin{equation}\label{e:m.PSL2_orb}
		\{\lambda\overline{v}\},\{\lambda^2\overline{v}\}, \{\overline{\lambda^kw+\mu v}:\mu\in\mathbb{F}_q\mbox{, $k$ odd}\}, \{\overline{\lambda^kw+\mu v}:\mu\in\mathbb{F}_q\mbox{, $k$ even, $q-1\nmid k$}\},
		\end{equation}
		where $w\in\mathbb{F}_q^2\setminus\la v\ra$ and $\lambda$ is a generator of $\mathbb{F}_q^\times$.
	\end{lem}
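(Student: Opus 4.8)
The plan is to treat the two cases $d\geqs 3$ and $d=2$ separately, in each case passing carefully from the action on $\mathbb{F}_q^d\setminus\{0\}$ (described in Lemma \ref{l:SL_suborb}) to the induced action on $\Delta$. Throughout I write $H=\{g\in\SL:v^g\in\la\lambda^2\ra v\}$ for the preimage in $\SL$ of the point stabiliser $\overline{\SL}_{\overline v}$, and I record that the $\overline{\SL}$-action on $\Delta$ factors through $C$, so the $\overline{\SL}_{\overline v}$-orbits on $\Delta$ are precisely the images of the $HC$-orbits on $\mathbb{F}_q^d\setminus\{0\}$.

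For $d\geqs 3$, note first that $\overline{\SL}_{\overline v}\leqs\overline{\GL}_{\overline v}$, so every $\overline{\SL}_{\overline v}$-orbit lies inside one of the $\overline{\GL}_{\overline v}$-orbits recorded in \eqref{e:m.GL_suborb}; it therefore suffices to show $\overline{\SL}_{\overline v}$ is already transitive on each of these. The singletons $\{\lambda^i\overline v\}$ are fixed, so the only work is the orbit $\{\overline w:w\notin\la v\ra\}$. Here I would invoke the $d>2$ part of Lemma \ref{l:SL_suborb}, which gives that $\SL_v$ is transitive on $\mathbb{F}_q^d\setminus\la v\ra$. Since $w\notin\la v\ra$ forces the whole $C$-orbit $\overline w$ to avoid $\la v\ra$, the projection $w\mapsto\overline w$ maps $\mathbb{F}_q^d\setminus\la v\ra$ onto $\{\overline w:w\notin\la v\ra\}$; as $\SL_v\leqs H$ fixes $\overline v$, this transitivity descends to $\Delta$ and the claim follows.

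For $d=2$ (with $r=2$ and $p$ odd, the latter being forced since otherwise $r\nmid q-1$ and $C$ could not be realised in $Z(\GL)$), I would argue directly. In the basis $\{v,w\}$ the group $H$ consists of the upper-triangular matrices with square diagonal entries, and the two points of $\overline{\la v\ra}$, namely $\{\overline v\}=\{\lambda^2\overline v\}$ and $\{\lambda\overline v\}$, are fixed. For a point $\overline{cw+\mu v}$ with $c\in\mathbb{F}_q^\times$, the \emph{square-class} of $c$ is well-defined on $\Delta$, since two representatives of a point differ by a square scalar from $C$; moreover it is $H$-invariant, as the action of $H$ multiplies the $w$-coefficient by a square. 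This is exactly the invariant distinguishing the $k$-odd set from the $k$-even set in \eqref{e:m.PSL2_orb}. To finish I would show $\overline{\SL}_{\overline v}$ is transitive on each of these two sets: by the $d=2$ part of Lemma \ref{l:SL_suborb} the transvections in $\SL_v$ act transitively on each affine coset $cw+\la v\ra$, and since cosets whose $w$-coefficients share a square-class are identified in $\Delta$ by the square scalars of $C$, each square-class region of $\{\overline w:w\notin\la v\ra\}$ collapses to a single $\overline{\SL}_{\overline v}$-orbit.

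The reduction for $d\geqs 3$ is routine. The main obstacle is the bookkeeping in the case $d=2$: one must keep precise track of the two independent identifications in play, namely scaling vectors by the squares of $C$ when passing to $\Delta$ and the square diagonal part of the stabiliser $H$, and verify that together they yield exactly the two orbits in \eqref{e:m.PSL2_orb} with no further splitting or merging. A clean consistency check is the count $|\Delta|=2(q+1)$, which must equal $2+q+q$; this confirms both that each square-class region contains $q$ points and that the square-class of the $w$-coefficient is a complete invariant on $\{\overline w:w\notin\la v\ra\}$.
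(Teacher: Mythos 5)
Your proof is correct and matches the paper's (implicit) argument: the paper states this lemma without proof, treating it as an immediate consequence of Lemma \ref{l:SL_suborb} by passing to the $C$-quotient $\Delta$, which is exactly the descent you carry out, with the square-class of the $w$-coefficient (well-defined since $C$ is precisely the group of square scalars when $r=2$) as the separating invariant in the $d=2$ case, and the count $|\Delta|=2(q+1)=2+q+q$ confirming the partition. The only blemish is that your preimage $H$ should be defined with $\la\lambda^r\ra$ rather than $\la\lambda^2\ra$ when $d\geqs 3$ and $r$ is general, but since that part of your argument only uses $\SL_v\leqs H$ and the refinement of the $\overline{\GL}_{\overline{v}}$-orbits in \eqref{e:m.GL_suborb}, nothing is affected.
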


	In view of Lemmas \ref{l:SL_suborb} and \ref{l:m.PSL_suborb}, we will consider the cases where $d>2$ and $(d,r) = (2,2)$ separately.
	
	\begin{prop}
		\label{p:n>2_rank3}
		Suppose $\overline{\SL}\leqs G\leqs\overline{\GammaL}$ and $d>2$. Then $G$ has rank $3$ on $\Delta$ if and only if the following conditions hold:
		\begin{itemize}\addtolength{\itemsep}{0.2\baselineskip}
			\item[{\rm (i)}] $r$ is a primitive prime divisor of $p^{r-1}-1$; and
			\item[{\rm (ii)}] $(r-1,\frac{f|G\cap\overline{\GL}|}{|G|}) = 1$.
		\end{itemize}
	\end{prop}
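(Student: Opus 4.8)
The plan is to use the suborbit description already established and reduce the rank $3$ condition to a transitivity statement on the fibre of $\Delta$ over a fixed $1$-subspace, which then becomes a number-theoretic condition on $p$, $r$ and the field-automorphism content of $G$. Since $\overline{\SL}\leqs G$ and $\SL_d(q)$ is transitive on $\mathbb{F}_q^d\setminus\{0\}$ (hence on $\Delta$), the group $G$ is transitive and its rank equals the number of $G_{\overline{v}}$-orbits on $\Delta$. As $d\geqs 3$, Lemma \ref{l:m.PSL_suborb} gives the orbits of $\overline{\SL}_{\overline{v}}$ as the $r$ singletons $\{\lambda^i\overline{v}\}$ (for $0\leqs i\leqs r-1$) lying over $\la v\ra$, together with the single large orbit $\{\overline{w}:w\notin\la v\ra\}$. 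Because $\overline{\SL}_{\overline{v}}\leqs G_{\overline{v}}$, the large orbit remains a single $G_{\overline{v}}$-orbit; and since $\phi$ fixes $v=v_1$, every element of $\overline{\GammaL}$ preserves the partition of $\Delta$ into points lying over $\la v\ra$ and points not lying over $\la v\ra$. Hence $G$ has rank $3$ if and only if $G_{\overline{v}}$ is transitive on the $r-1$ points $\lambda\overline{v},\dots,\lambda^{r-1}\overline{v}$.

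Next I would pin down the action of $G_{\overline{v}}$ on the fibre $\{\overline{v},\lambda\overline{v},\dots,\lambda^{r-1}\overline{v}\}$. Writing $\overline{\GammaL}=\overline{\GL}{:}\la\overline{\phi}\ra$ with $|\overline{\phi}|=f$, the image of $G$ in $\overline{\GammaL}/\overline{\GL}\cong\bbZ_f$ is cyclic, generated by $\overline{\phi}^{t_0}$ where $t_0=f|G\cap\overline{\GL}|/|G|$. I claim the image of $G_{\overline{v}}$ in $\la\overline{\phi}\ra$ is again $\la\overline{\phi}^{t_0}\ra$: given $\overline{\phi}^{t_0}g\in G$ with $g\in\overline{\GL}$, transitivity of $\overline{\SL}\leqs G\cap\overline{\GL}$ on $\Delta$ lets us choose $h\in G\cap\overline{\GL}$ with $\overline{v}^{gh}=\overline{v}$, so that $\overline{\phi}^{t_0}gh\in G_{\overline{v}}$ has the required field-automorphism part. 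Since $\overline{\GL}_{\overline{v}}$ fixes each $\lambda^i\overline{v}$, the action of $G_{\overline{v}}$ on the fibre factors through $\la\overline{\phi}^{t_0}\ra$, and by Lemma \ref{l:phi_orb} the generator $\overline{\phi}^{t_0}$ acts as $\lambda^i\overline{v}\mapsto\lambda^{ip^{t_0}}\overline{v}$, that is, as multiplication by $p^{t_0}$ on the exponents modulo $r$.

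It then remains to analyse when multiplication by $p^{t_0}$ is transitive on the residues $\{1,\dots,r-1\}$ modulo $r$. Since $r\mid q-1=p^f-1$ we have $\gcd(p,r)=1$, so this map is a permutation preserving each $\gcd$-class modulo $r$; transitivity on all of $\{1,\dots,r-1\}$ therefore forces $r$ to be prime and $\la p^{t_0}\ra=(\bbZ/r\bbZ)^\times$, i.e. $p^{t_0}$ is a primitive root modulo $r$. Finally I would translate this via multiplicative orders: writing $o$ for the order of $p$ modulo $r$, one has that $p^{t_0}$ is a primitive root modulo $r$ if and only if $o=r-1$ and $\gcd(r-1,t_0)=1$. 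Together with the primality of $r$ forced above, the condition $o=r-1$ says precisely that $r$ is a primitive prime divisor of $p^{r-1}-1$, giving (i), while $\gcd(r-1,t_0)=1$ is exactly $(r-1,f|G\cap\overline{\GL}|/|G|)=1$, giving (ii). The main obstacle is the bookkeeping in the middle paragraph—verifying that passing to the point stabiliser does not shrink the field-automorphism image, and that the $\overline{\GL}$-part acts trivially on the fibre—since everything else is then a routine order computation.
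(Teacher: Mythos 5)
Your proof is correct and takes essentially the same route as the paper: both reduce rank $3$ to transitivity of $G_{\overline{v}}$ on $\{\lambda\overline{v},\dots,\lambda^{r-1}\overline{v}\}$ via the suborbit lemmas (Lemmas \ref{l:m.PGL_suborb}, \ref{l:m.PSL_suborb}) and Lemma \ref{l:phi_orb}, and both convert this to number theory modulo $r$ --- the paper obtains condition (i) from Proposition \ref{p:m.PGammaL_suborb} and encodes (ii) as the condition $(\star)$ on which cosets $\overline{\delta^i\phi^j}$ lie in $G$, whereas you derive (i) and (ii) simultaneously from the equivalent statement that $p^{t_0}$ is a primitive root modulo $r$, where $t_0 = f|G\cap\overline{\GL}|/|G|$ is the index of the field-automorphism image of $G$. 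One cosmetic slip: it is not true that every element of $\overline{\GammaL}$ preserves the two-part partition into points over $\la v\ra$ and the rest; rather, the fibres form a block system, so $G_{\overline{v}}$, fixing $\overline{v}$, stabilises the fibre over $\la v\ra$ setwise together with its complement --- which is all your argument actually uses.
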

	
	\begin{proof}
		As noted in Proposition \ref{p:m.PGammaL_suborb}, we may assume $r$ is a primitive prime divisor of $p^{r-1}-1$, and it suffices to show that $G$ is of rank $3$ if and only if the condition (ii) holds. By Lemma \ref{l:m.PSL_suborb}, the orbits of $\overline{\SL}_{\overline{v}}$ and $\overline{\GL}_{\overline{v}}$ coincide, which implies that $(\lambda^\ell\overline{v})^{\overline{\delta}} = \lambda^\ell\overline{v}$ for any $\ell\in\{1,\dots,r\}$ (here $|\delta| = q-1$ with $\GL = \la\SL,\delta\ra$, and $\overline{\delta}$ is its image in $\overline{\GammaL}$). It is worth noting that the condition (ii) is equivalent to
		\begin{equation}
		\label{e:star}
		\begin{aligned}
		&\mbox{for any $t\in\{0,\dots,r-2\}$, there exists $j$ such that}
		\\
		&\mbox{$j\equiv t\pmod{r-1}$ and $\overline{\delta^i\phi^j}\in G$ for some $i$.}
		\end{aligned}
		\tag{$\star$}
		\end{equation}
		
		Suppose \eqref{e:star} holds, noting that $p^t\equiv p^j\pmod{r}$ and $1,p,\dots,p^{r-2}$ are distinct modulo $r$. This implies that for any $\ell\in\{1,\dots,r-1\}$, there exists an integer $j$ such that $1\equiv \ell p^{j}\pmod r$ and $\overline{\delta^i\phi^j}\in G$ for some $i$. Since $r\mid q-1$, we have $1\equiv \ell p^{j}+sr\pmod {q-1}$ for some integer $s$. By Lemma \ref{l:phi_orb},
		\begin{equation*}
		(\lambda^\ell\overline{v})^{\overline{\delta^i\phi^j}} = (\lambda^\ell\overline{v})^{\overline{\phi^j}} = \lambda \overline{v}.
		\end{equation*}
		Hence, $\{\lambda\overline{v},\dots,\lambda^{r-1}\overline{v}\}$ is a $G_{\overline{v}}$-orbit, and so $G$ is of rank $3$.
		
		To complete the proof, we assume $G$ is of rank $3$. This implies that the orbits of $G_{\overline{v}}$ are
		\begin{equation*}
		\{\overline{v}\},\{\lambda\overline{v},\dots,\lambda^{r-1}\overline{v}\},\{\overline{w}:w\notin\la v\ra\}.
		\end{equation*}
		Suppose \eqref{e:star} fails to hold. That is, there exists $t\in\{0,\dots,r-2\}$ such that for any $\overline{\delta^i\phi^j}\in G$, we have $j\not\equiv t\pmod{r-1}$, which is equivalent to $p^t\not\equiv p^j\pmod{r}$ as $r$ is a primitive prime divisor of $p^{r-1}-1$. Let $\ell\in\{1,\dots,r-1\}$ be such that $1\equiv \ell p^t\pmod{r}$. It follows from Lemma \ref{l:phi_orb} that there exists $\overline{\delta^i\phi^k}\in G$ such that $1\equiv \ell p^k+sr \pmod{q-1}$ for some $s$, since $(\lambda^\ell\overline{v})^{\overline{\delta}} = \lambda^\ell\overline{v}$. However, this implies that $1\equiv \ell p^k\pmod r$ and so $p^t\equiv p^k\pmod r$, which is incompatible with our assumption.
	\end{proof}
	
	Finally, let us consider the case where $(d,r) = (2,2)$ and $p$ is odd. The insolubility implies that $q\geqs 5$. Let $w\in\mathbb{F}_q^2\setminus\la v\ra$ and $\phi$ be the field automorphism of order $f$ such that
	\begin{equation*}
	(a_1v+a_2w)^\phi = a_1^pv+a_2^pw
	\end{equation*}
	for any $a_1,a_2\in\mathbb{F}_q$. Recall that $\SigmaL = \la\SL,\phi\ra$ and $\overline{\SigmaL} =\la\overline{\SL},\overline{\phi}\ra$.
	
	\begin{prop}
		\label{p:(d,m) = (2,2)}
		Suppose $\overline{\SL}\leqs G\leqs\overline{\GammaL}$, $(d,r) = (2,2)$ and $p$ is odd. Then $G$ has rank $3$ on $\Delta$ if and only if $G$ is not a subgroup of $\overline{\SigmaL}$.
	\end{prop}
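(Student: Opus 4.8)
The plan is to read off the rank of $G$ directly from the four $\overline{\SL}_{\overline{v}}$-suborbits supplied by Lemma~\ref{l:m.PSL_suborb}, and then to track how the overgroups between $\overline{\SL}$ and $\overline{\GammaL}$ fuse them. Recall that those four suborbits are the two singletons $\{\overline{v}\}$ and $\{\lambda\overline{v}\}$, which together form the block $B=\langle v\rangle\in\calB$ containing $\overline{v}$, together with the two large suborbits
\[
\Sigma_1=\{\overline{\lambda^k w+\mu v}:\mu\in\mathbb{F}_q,\ k\text{ odd}\},\qquad \Sigma_2=\{\overline{\lambda^k w+\mu v}:\mu\in\mathbb{F}_q,\ k\text{ even}\},
\]
which partition $\Delta\setminus B$ according to the parity of $k$, equivalently according to whether the $w$-coordinate lies in $\langle\lambda^2\rangle$ or in $\lambda\langle\lambda^2\rangle$. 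First I would note that this parity is a well-defined function on $\Delta\setminus B$, since replacing a representative $u$ by $\lambda^2 u$ (the generator of the $C$-action) shifts $k$ by $2$ and hence preserves its parity.

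Next, because $\overline{\SL}\leqslant G\leqslant\overline{\GammaL}$ preserves the block system $\calB$ and $|B|=2$, every $G_{\overline{v}}$-orbit lies either inside $B$ or inside $\Delta\setminus B$. In particular $G_{\overline{v}}$ fixes the second block-point $\lambda\overline{v}$, so $\{\lambda\overline{v}\}$ stays a suborbit, and the only possible fusion is between $\Sigma_1$ and $\Sigma_2$. Thus $G$ has rank $3$ if and only if $G_{\overline{v}}$ fuses $\Sigma_1$ and $\Sigma_2$ into a single orbit, i.e.\ if and only if some element of $G_{\overline{v}}$ reverses the parity invariant.

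The key computation is to decide which elements reverse this parity. Taking $\{v,w\}$ to be the $\overline{\phi}$-fixed basis and writing a general element of $G_{\overline{v}}$ (which necessarily stabilises $B$) as $g=h\overline{\phi}^{\,t}$ with $h=\left(\begin{smallmatrix} a&b\\ 0&e\end{smallmatrix}\right)$ upper-triangular and $a\in\langle\lambda^2\rangle$, I would compute that $g$ sends the $w$-coordinate $\lambda^k$ to $\lambda^{kp^t}e$. Since $p$ is odd, $k\mapsto kp^t$ preserves parity, so $g$ reverses the parity precisely when $e$, equivalently $\det h=ae$, is a non-square in $\mathbb{F}_q^\times$. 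In particular $\overline{\SL}_{\overline{v}}$ and $\overline{\phi}$ preserve the parity, consistent with $\Sigma_1,\Sigma_2$ being distinct $\overline{\SL}$-suborbits. The resulting sign is the restriction to $G_{\overline{v}}$ of a homomorphism $\overline{\GammaL}\to\bbZ_2$ of the shape $h\overline{\phi}^{\,t}\mapsto\det h\bmod(\mathbb{F}_q^\times)^2$, and the crucial point to establish is that this sign map is trivial exactly on $\overline{\SigmaL}$.

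Finally, since $\overline{\SL}\trianglelefteq G$ is transitive on $\Delta$ (as in Lemma~\ref{l:semi_innately}), the Frattini-type identity $G=\overline{\SL}\,G_{\overline{v}}$ holds; as the sign map is trivial on $\overline{\SL}$ it takes the same values on $G$ and on $G_{\overline{v}}$. Hence $G_{\overline{v}}$ contains a parity-reversing element if and only if $G\not\leqslant\overline{\SigmaL}$, which combined with the second paragraph gives the stated equivalence. I expect the main obstacle to be the bookkeeping in the third paragraph: one must verify carefully, after the reduction modulo $C$, that the parity-preserving subgroup is exactly $\overline{\SigmaL}$ and not some larger intermediate subgroup of $\overline{\GammaL}$, and it is precisely here that the hypotheses $r=2$ and $p$ odd (through the index relations among $\overline{\SL}$, $\overline{\SigmaL}$ and $\overline{\GammaL}$, and the fact that $p^t$ is odd) are used in an essential way.
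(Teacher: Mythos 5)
Your strategy is the same as the paper's: read off the four $\overline{\SL}_{\overline{v}}$-suborbits from Lemma \ref{l:m.PSL_suborb}, observe that the two singletons lie in the block $B$ of size $2$ and so can never fuse with anything, note that $\overline{\phi}$ preserves the parity of the exponent of the $w$-coordinate because $p^t$ is odd, and conclude that rank $3$ holds if and only if $G_{\overline{v}}$ contains an element whose linear part has non-square determinant. Your sign homomorphism and the Frattini step $G=\overline{\SL}\,G_{\overline{v}}$ are a more explicit rendering of the paper's two-line argument, which simply asserts that $G\not\leqs\overline{\SigmaL}$ yields $\overline{\delta\phi^j}\in G$ for some $j$.

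However, the point you defer as ``the crucial point to establish'' --- that the kernel of the sign map is exactly $\overline{\SigmaL}$ --- is not mere bookkeeping: for arbitrary odd $q$ it is false, and with it the biconditional of the proposition read in isolation. Suppose $q\equiv 1\pmod 4$ and let $g=\mathrm{diag}(\lambda^2,1)$. Then $\overline{g}$ fixes $\overline{v}$ and has square determinant, so it preserves your parity invariant; yet $\overline{g}\notin\overline{\SigmaL}$, because $\overline{\SigmaL}\cap\overline{\GL}$ is the image of $\SL\,C$, whose determinants form $\la\lambda^4\ra$, and $\lambda^2\notin\la\lambda^4\ra$ when $4\mid q-1$. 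Hence $G=\la\overline{\SL},\overline{g}\ra$ satisfies $G\not\leqs\overline{\SigmaL}$ but every element of $G_{\overline{v}}$ has square determinant, so $G$ has rank $4$: concretely, for $q=5$, $f=1$ this $G\cong\bbZ_2\times\PSL_2(5)$ acts on $12$ points with suborbit lengths $1,1,5,5$. Your kernel claim (and the clean statement of the proposition) holds exactly when $q\equiv 3\pmod 4$, i.e.\ when $[\overline{\GL}:\overline{\SL}]=2$; this is precisely the condition $r\nmid\frac{q-1}{(d,q-1)}$ which, by Lemma \ref{l:semi_innately}, is automatic for the non-innately-transitive rank $3$ groups the proposition feeds into, and which is explicitly built into Proposition \ref{p:PSL} (the counterexample above is innately transitive, so the final classification is unaffected). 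To your credit, the paper's own proof makes the same silent leap at ``and so $\overline{\delta\phi^j}\in G$'', which fails for the $G$ above since only elements of the form $\overline{s\delta^{2i}}$ occur; so you have located exactly the delicate step, but to close your argument you must either invoke $q\equiv3\pmod4$ (equivalently the standing non-innately-transitive hypothesis) or carry out the determinant computation $\det(\SL\,C)=\la\lambda^4\ra$ and see that it genuinely requires $r\nmid\frac{q-1}{2}$.
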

	
	\begin{proof}
		First note that the field automorphism $\overline{\phi}$ preserves the $\overline{\SL}_{\overline{v}}$-orbits, as presented in \eqref{e:m.PSL2_orb}. Thus, if $G\leqs \overline{\SigmaL} = \overline{\SL}.\la \overline{\phi}\ra$, then $\overline{\SL}_{\overline{v}}$ and $G_{\overline{v}}$ have the same set of orbits, and so $G$ has rank $4$ on $\Delta$ as $q\geqs 5$ by our assumption. Now assume that $G$ is not a subgroup of $\overline{\SigmaL}$, and so $\overline{\delta\phi^j}\in G$ for some $j$. Once again, as $\overline{\phi}$ preserves the $\overline{\SL}_{\overline{v}}$-orbits, the set of $G_{\overline{v}}$-orbits are the set of $\overline{\GL}_{\overline{v}}$-orbits as presented in \eqref{e:m.GL_suborb}. Therefore, $G$ has rank $3$ on $\Delta$.
	\end{proof}
	
	To sum up, we record the rank $3$ groups discussed above as follows.

	\begin{prop}
		\label{p:PSL}
		Suppose $q = p^f$ for some prime $p$ and $C\cong\bbZ_{(q-1)/r}$ is a subgroup of $Z(\GL_d(q))$. Let
		\begin{equation*}
		\overline{\cdot}:\GammaL_d(q)\to \GammaL_d(q)/C
		\end{equation*}
		be the quotient map. Suppose $\overline{\SL_d(q)}\leqs G\leqs\overline{\GammaL_d(q)}$ and $G$ is not innately transitive on $\Omega$, where $\Omega$ is the set of $C$-orbits on $\mathbb{F}_q^d\setminus\{0\}$. Then $G$ has rank $3$ on $\Omega$ if and only if $r$ is a primitive prime divisor of $p^{r-1}-1$, $r\nmid \frac{q-1}{(d,q-1)}$, and one of the following holds:
		\begin{itemize}\addtolength{\itemsep}{0.2\baselineskip}
			\item[{\rm (i)}] $d\geqs 3$ and $(r-1,\frac{f|G\cap\overline{\GL_d(q)}|}{|G|}) = 1$; or
			\item[{\rm (ii)}] $(d,r) = (2,2)$ and $G$ is not a subgroup of $\overline{\SigmaL_2(q)}$.
		\end{itemize}
		In particular, we have $r-1\mid f$, and moreover, $G$ is semiprimitive if $G$ has rank $3$ on $\Omega$.
	\end{prop}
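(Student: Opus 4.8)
The plan is to assemble the results established earlier in this section, splitting the argument according to whether $d\geqs 3$ or $d=2$. Throughout I identify $\Omega$ with the set $\Delta$ of $C$-orbits introduced in \eqref{e:Delta}, so that the hypothesis $\overline{\SL_d(q)}\leqs G\leqs\overline{\GammaL_d(q)}$ is exactly the setting of Propositions \ref{p:n>2_rank3} and \ref{p:(d,m) = (2,2)}. The key observation is that the rank-$3$ criterion is supplied by these two propositions, while the standing assumption that $G$ is not innately transitive is converted into the arithmetic condition $r\nmid\frac{q-1}{(d,q-1)}$ via Lemma \ref{l:semi_innately}.

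First I would treat the forward implication. Suppose $G$ has rank $3$ on $\Omega$. Since $G$ is not innately transitive, Lemma \ref{l:semi_innately} yields $r\mid(d,q-1)$ and $r\nmid\frac{q-1}{(d,q-1)}$, giving the condition $r\nmid\frac{q-1}{(d,q-1)}$ common to both cases. If $d\geqs 3$, then Proposition \ref{p:n>2_rank3} shows that $r$ is a primitive prime divisor of $p^{r-1}-1$ and that $(r-1,\frac{f|G\cap\overline{\GL_d(q)}|}{|G|})=1$, which is precisely (i). If $d=2$, then $r\mid(d,q-1)=(2,q-1)$ forces $r\in\{1,2\}$; the possibility $r=1$ is excluded because then $C=Z(\GL)$ and $G\geqs\overline{\SL_2(q)}\cong\PSL_2(q)$ acts $2$-transitively on the $1$-subspaces of $\mathbb{F}_q^2$, so $G$ has rank $2$ (compare Lemma \ref{l:m.PGL_suborb}). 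Hence $r=2$ and $(d,r)=(2,2)$; moreover $r=2\mid q-1$ forces $q$, and so $p$, to be odd, and $r=2$ is vacuously a primitive prime divisor of $p^{r-1}-1=p-1$. Proposition \ref{p:(d,m) = (2,2)} then gives that $G$ is not contained in $\overline{\SigmaL_2(q)}$, which is (ii).

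For the converse I would argue in the two cases directly. If (i) holds, then $d\geqs 3$ and Proposition \ref{p:n>2_rank3} gives at once that $G$ has rank $3$; if (ii) holds, then $(d,r)=(2,2)$ and $p$ is odd (the latter since $r=2\mid q-1$), so Proposition \ref{p:(d,m) = (2,2)} gives rank $3$. Note that in this direction the recorded condition $r\nmid\frac{q-1}{(d,q-1)}$ is part of the hypothesis and is not needed for the rank computation itself; it appears in the statement because it is forced by the non-innate-transitivity in the forward direction.

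Finally, for the two supplementary assertions: whenever $r$ is a primitive prime divisor of $p^{r-1}-1$, the multiplicative order of $p$ modulo $r$ equals $r-1$, and since $r\mid q-1=p^f-1$ this order divides $f$, giving $r-1\mid f$; and the semiprimitivity of any rank-$3$ group in this family is exactly the first assertion of Lemma \ref{l:semi_innately}. I expect the only step demanding genuine care to be the reduction to $(d,r)=(2,2)$ when $d=2$, where one must simultaneously use Lemma \ref{l:semi_innately} to bound $r\leqs 2$, eliminate $r=1$ by the rank-$2$ argument, and verify that $p$ is odd before Proposition \ref{p:(d,m) = (2,2)} may be applied; everything else is a direct appeal to the preceding results.
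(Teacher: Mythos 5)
Your proposal is correct and matches the paper's route exactly: the paper states Proposition \ref{p:PSL} as a summary of the preceding results, and your argument is precisely the assembly of Lemma \ref{l:semi_innately} (converting non-innate-transitivity into $r\mid(d,q-1)$ and $r\nmid\frac{q-1}{(d,q-1)}$, and giving semiprimitivity) with Propositions \ref{p:n>2_rank3} and \ref{p:(d,m) = (2,2)} for the cases $d\geqs 3$ and $(d,r)=(2,2)$, together with the observation that $r-1\mid f$ since the order of $p$ modulo $r$ is $r-1$ and divides $f$. Your careful handling of the $d=2$ reduction (eliminating $r=1$, noting $p$ odd, and the vacuous primitive-prime-divisor condition for $r=2$) fills in exactly the glue the paper leaves implicit.
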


	The final observation in this section will be used in proving Theorem \ref{thm:semi} in Section \ref{s:semi_proof}.

	\begin{lem}
		\label{l:normalsl}
		$N_{\Sym(\Delta)}(\overline{\SL})=\overline{\GammaL}$.
	\end{lem}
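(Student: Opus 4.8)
The plan is to prove the two inclusions separately; write $\M=N_{\Sym(\Delta)}(\overline{\SL})$. The inclusion $\overline{\GammaL}\leqs\M$ is immediate: since $\SL_d(q)\normeq\GammaL_d(q)$ and the map onto the induced group in $\Sym(\Delta)$ carries normal subgroups to normal subgroups of the image, we have $\overline{\SL}\normeq\overline{\GammaL}$. All the work lies in the reverse inclusion $\M\leqs\overline{\GammaL}$, and the strategy is to first pin down a canonical $\overline{\SL}$-invariant partition that $\M$ must preserve, then push the problem down to the classical normaliser theorem on projective space, and finally control the induced kernel.

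First I would single out the block system $\calB$ of $\Delta$ whose blocks are the fibres of the projection $\overline{v}\mapsto\la v\ra$ onto $\PG_{d-1}(q)$; equivalently, $\calB$ is the set of orbits of $Z(\overline{\GL})\cong\bbZ_r$, where $Z(\overline{\GL})$ is the image of the group of scalar matrices and acts by $\overline{v}\mapsto\lambda\overline{v}$. The crucial claim is that $C_{\Sym(\Delta)}(\overline{\SL})=Z(\overline{\GL})$. One inclusion holds because scalars centralise $\overline{\SL}$. For the other, recall $\overline{\SL}$ is transitive on $\Delta$, so its centraliser is semiregular and any $c$ in it satisfies $\overline{v}^{c}\in\Fix_\Delta(\overline{\SL}_{\overline{v}})$. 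The point-stabiliser orbits described in Lemmas \ref{l:SL_suborb} and \ref{l:m.PSL_suborb} show that the fixed points of $\overline{\SL}_{\overline{v}}$ on $\Delta$ are precisely the $r$ singleton orbits $\{\overline{v}\},\{\lambda\overline{v}\},\dots,\{\lambda^{r-1}\overline{v}\}$, that is, the block of $\calB$ through $\overline{v}$; hence $|C_{\Sym(\Delta)}(\overline{\SL})|\leqs r$ and the claim follows. Since $\M$ normalises $\overline{\SL}$, it normalises this centraliser and so permutes its orbits, whence $\M$ preserves $\calB$.

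Next I would pass to the induced action on $\calB\cong\PG_{d-1}(q)$, where $\overline{\SL}$ induces $\PSL_d(q)$ and $\overline{\GammaL}$ induces $\PGammaL_d(q)$ with kernel $\overline{\GammaL}_{(\calB)}=Z(\overline{\GL})$. The induced group $\M^{\calB}$ normalises $\PSL_d(q)$ in $\Sym(\PG_{d-1}(q))$, so the classical result $N_{\Sym(\PG_{d-1}(q))}(\PSL_d(q))=\PGammaL_d(q)$ gives $\M^{\calB}\leqs\PGammaL_d(q)=\overline{\GammaL}^{\calB}$. It then remains to bound the kernel $\M_{(\calB)}$ of the action of $\M$ on $\calB$. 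Here $\M_{(\calB)}\normeq\M$ and $\overline{\SL}\normeq\M$, while $\overline{\SL}\cap\M_{(\calB)}=\overline{\SL}_{(\calB)}$ is the group of scalars in $\overline{\SL}$, which lies in $Z(\overline{\SL})$; since $\overline{\SL}$ is perfect in all the insoluble cases under consideration, Lemma \ref{l:perfect} gives $[\overline{\SL},\M_{(\calB)}]=1$, so $\M_{(\calB)}\leqs C_{\Sym(\Delta)}(\overline{\SL})=Z(\overline{\GL})$. Thus $\M$ and $\overline{\GammaL}$ have the same image $\PGammaL_d(q)$ on $\calB$ and the same kernel $Z(\overline{\GL})$; as $\overline{\GammaL}\leqs\M$, comparing orders forces $\M=\overline{\GammaL}$.

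The step I expect to be the main obstacle is the centraliser computation, namely checking that $\Fix_\Delta(\overline{\SL}_{\overline{v}})$ collapses to a single block in every relevant case --- in particular for $d=2$, where the point-stabiliser orbits are more intricate (cf. Lemma \ref{l:m.PSL_suborb}) --- since everything afterwards is formal once $C_{\Sym(\Delta)}(\overline{\SL})=Z(\overline{\GL})$ is established. A secondary technical point is to confirm that the quoted normaliser theorem for $\PSL_d(q)$ on $\PG_{d-1}(q)$ holds throughout the relevant range: the only candidate for an extra symmetry is the graph automorphism, but this sends the action on points to the inequivalent dual action and so contributes nothing, and the small exceptional isomorphisms of $\PSL_d(q)$ can be dealt with directly if needed.
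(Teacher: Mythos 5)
Your proof is correct and follows essentially the same route as the paper: both reduce to the classical normaliser statement $N_{\Sym(\PG_{d-1}(q))}(\PSL_d(q)) = \PGammaL_d(q)$ via the block system of scalar orbits, and both handle the kernel of the action on $\calB$ by intersecting with $\overline{\SL}$ and applying Lemma \ref{l:perfect}, finishing with an order comparison against $\overline{\GammaL}$. The only divergence is how $C_{\Sym(\Delta)}(\overline{\SL}) = Z(\overline{\GL})$ is established: you read it off from the fixed points of the point stabilisers in Lemmas \ref{l:SL_suborb} and \ref{l:m.PSL_suborb} together with semiregularity (valid within the section's standing scope $d\geqs 3$ or $(d,r)=(2,2)$ with $p$ odd), whereas the paper avoids the suborbit lemmas by observing that the orbit partition of the centraliser induces a block system for the $2$-transitive action of $\overline{\SL}$ on the set of $Z(\overline{\GL})$-orbits and must therefore coincide with it --- a minor variation that does not change the substance of the argument.
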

	
	\begin{proof}
		Let $N:=N_{\Sym(\Delta)}(\overline{\SL})$, and let $M:=C_{\Sym(\Delta)}(\overline{\SL})$.
		It is clear that $\overline{\GammaL}\leqslant N$ and
		\begin{equation*}
		\overline{\SL}/(M\cap \overline{\SL})=\overline{\SL}/Z(\overline{\SL})\cong\PSL_d(q).
		\end{equation*}
		Let $\calB$ be the set of $M$-orbits on $\Delta$, which forms a block system of $N$ on $\Delta$. Then the permutation image $\overline{\SL}^\calB$ of $\overline{\SL}$ on $\calB$ is isomorphic to $\PSL_d(q)$ or $1$.
		Since $\overline{\SL}$ is not regular on $\Delta$, we deduce that $M$ is intransitive on $\Delta$, which implies that $|\calB|\neq 1$ and so $\overline{\SL}^\calB\cong\PSL_d(q)$.
		Note that $\overline{\SL}$ acts $2$-transitively on the set $\Delta_{Z(\overline{\GL})}$ of orbits of $Z(\overline{\GL})$ on $\Delta$, of which $\calB$ is a block system. It follows that $\Delta_{Z(\overline{\GL})} = \calB$, and thus  $\overline{\GammaL}^\calB\cong\PGammaL_d(q)$ and $M=Z(\overline{\GL})$ as $M$ is semiregular. Now
		\begin{equation*}
		\PGammaL_d(q)\cong \overline{\GammaL}^\calB\leqs N^\calB\leqs N_{\mathrm{Sym}(\calB)}(\overline{\SL}^\calB)\cong \PGammaL_d(q)
		\end{equation*}
		and so $N^\calB = \overline{\GammaL}^\calB$.
		Note that $N_{(\calB)}\cap\overline{\SL}=\overline{\SL}_{(\calB)} = Z(\overline{\SL})$, which implies that $[N_{(\calB)},\overline{\SL}] = 1$ by Lemma \ref{l:perfect}.
		It follows that $N_{(\calB)}\leqs M$, and thus $N_{(\calB)} = M$ since $\calB$ is the set of $M$-orbits on $\Delta$.
		Now $N/M = N^\calB= \overline{\GammaL}^\calB = \overline{\GammaL}/M$, which yields $N=\overline{\GammaL}$.
	\end{proof}

	\section{The proof of Theorem \ref{thm:semi}}
	
	\label{s:semi_proof}

	We will prove Theorem \ref{thm:semi} in this section, so we may assume Hypothesis \ref{hypo:imprim}, where $G$ is semiprimitive but not innately transitive.
	
	\subsection{The groups with $G^\calB$ almost simple}
	
	We first consider the case where $G^\calB= G/K$ is almost simple with socle $R$. Recall that if $G$ has rank $3$, then $K$ is abelian by Lemma \ref{l:semi}(v). And we have $C_G(K)/K\normeq G/K$, so either $K.R\leqs C_G(K)$ or $C_G(K) = K$. We first show that the latter case does not arise.
	
	\begin{lem}
		\label{l:C_G(K)_ne_K}
		Assume Hypothesis \ref{hypo:imprim}, where $G$ is semiprimitive but not innately transitive and $G^\calB$ is almost simple. If $G$ has rank $3$ on $\Omega$, then $K.R\leqs C_G(K)$ and $K.R$ is quasisimple.
	\end{lem}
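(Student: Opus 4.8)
The displayed dichotomy recorded just before the statement shows that $C_G(K)/K\normeq G/K$, so $C_G(K)=K$ or $K.R\leqs C_G(K)$; hence the whole task is to rule out the possibility $C_G(K)=K$. I would argue by contradiction and assume $C_G(K)=K$, with the aim of verifying that $G$ then satisfies every hypothesis of Lemma \ref{l:C=L,KB=1,Xas}, whose conclusion that $G$ is not of rank $3$ on $\Omega$ contradicts our standing assumption. Recall from Lemma \ref{l:semi} that $K$ is the unique minimal normal subgroup of $G$, that $G_B^B$ is affine with $\soc(G_B^B)\cong K$, and that $K$ is abelian (so indeed $K\leqs C_G(K)$, consistent with the dichotomy). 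Write $M=\soc(G_B^B)$ and $L=G\cap M^n$, which is the elementary abelian normal $p$-subgroup appearing in the statement of Lemma \ref{l:C=L,KB=1,Xas}.

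The decisive step is to prove $L=K$. The inclusion $L\leqs K$ is immediate from $M^n\leqs Y^n$. For the reverse it suffices to show $K^{B_i}=M$ for every block $B_i$. Since $K=G_{(\calB)}$ fixes each block, $K$ is an intransitive normal subgroup of the semiprimitive group $G$, hence semiregular, so $K_{(B)}=1$; and as $K^{B_i}\normeq G_{B_i}^{B_i}\cong Y$ is a non-trivial normal subgroup of the primitive affine group $Y$ it is transitive on $B_i$, whence $K$ is regular on $B_i$ and $|K^{B_i}|=p^d=|M|$. But $M$ is the self-centralising socle of $Y$, so the only normal subgroup of $Y$ of order $p^d$ is $M$ itself; thus $K^{B_i}=M$, giving $K\leqs M^n$ and $L=K$. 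Now the hypotheses of Lemma \ref{l:C=L,KB=1,Xas} are all in place: $C_G(L)=C_G(K)=K=L$ by assumption, so $C=L$; we have $K_{(B)}=1$ by semiregularity; and $G^\calB$ is almost simple by hypothesis. Lemma \ref{l:C=L,KB=1,Xas} then forces $G$ not to have rank $3$, a contradiction. Therefore $C_G(K)\ne K$, and as $C_G(K)/K$ is a non-trivial normal subgroup of the almost simple group $G^\calB=G/K$ it contains the socle $R$; that is, $K.R\leqs C_G(K)$, as required.

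The cleanest route is this reduction, the only genuinely new input being the identification $L=K$. If a self-contained argument is preferred, one can instead reproduce the proof of Lemma \ref{l:C=L,KB=1,Xas} in place: from $C_G(K)=K$ one gets $G_{(B)}=1$ (since $G_{(B)}\leqs C_G(K)=K$ meets $K$ trivially), so the conjugation action realises both $G/K\cong G^\calB$ and $G_B/K\cong G_\beta$ faithfully as transitive linear subgroups of $\Aut(K)\cong\GL_d(p)$, with $p^d$ dividing both orders by Lemma \ref{l:DGLPP:l:4.2}; one then runs through the three families of Corollary \ref{c:trans_linear_prime-power} for $G/K$ and matches them against the almost simple $2$-transitive actions of Table \ref{tab:2-trans_as}, in each case contradicting the requirement that $G_B/K$ be a transitive linear group of order divisible by $p^d$. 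Either way, the main obstacle is the case-by-case incompatibility at the end, namely that a quotient which is simultaneously almost simple and transitive linear, together with the point stabiliser realising the second $2$-transitive action forced by rank $3$, cannot coexist; this is exactly the content packaged in Lemma \ref{l:C=L,KB=1,Xas}, so invoking that lemma (after checking $L=K$) is the most economical presentation.
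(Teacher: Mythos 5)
Your proposal is correct and follows essentially the same route as the paper's proof: argue by contradiction from $C_G(K)=K$, identify $L=G\cap\soc(Y)^n$ with $K$, and invoke Lemma \ref{l:C=L,KB=1,Xas} to contradict the rank $3$ assumption. Your explicit verification that $K^{B_i}=M$ on each block (hence $K\leqs L$) merely fills in a detail that the paper compresses into citations of Lemmas \ref{l:norm_trans} and \ref{l:semi}(i).
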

	
	\begin{proof}
		Suppose that $G$ has rank $3$ on $\Omega$ and $K = C_G(K)$. Since $K\cap G_{(B)} = 1$ and $G_{(B)}\normeq G_B$, we have $K\times G_{(B)}\leqs C_G(K)\cap G_B = K$, which implies that $G_{(B)} = 1$. Let $L = G\cap \soc(Y)^n$, noting that $L\normeq G$ is intransitive. By Lemmas \ref{l:norm_trans} and \ref{l:semi}(i), we have $L = K$, whence
		\begin{equation*}
		C_G(L) = C_G(K) = K = L,
		\end{equation*}
		which is incompatible with Lemma \ref{l:C=L,KB=1,Xas}.

		This shows that $K\ne C_G(K)$, and so $K.R\leqslant C_G(K)$ as noted above. Now let $H=(K.R)^\infty$.
		Then $H\neq 1$ is normal in $G$, and so is $H\cap K$.
		By Lemma~\ref{l:semi}(i), we see that $H\cap K = 1$ or $K$.
		If the former case holds, then $H\cong R$ is a minimal normal subgroup of $G$, and hence $H$ is a transitive by Lemma~\ref{l:semi}(i), which yields $G$ is innately transitive.
		Thus, we have $H\cap K=K$ and so $K\leqs H$. This implies that $H=K.R$ is a perfect group with $Z(H)=K$.
		Therefore, $K.R$ is quasisimple.
	\end{proof}

The following lemma describes the possibilities of the group $K.R$.
	
	\begin{lem}
		\label{l:K.R}
		Assume Hypothesis \ref{hypo:imprim}, where $G$ is semiprimitive but not innately transitive, $H$ is a point stabiliser of $G$, and assume $G^\calB$ is almost simple. If $G$ has rank $3$ on $\Omega$, then one of the following holds (up to permutation isomorphism):
		\begin{itemize}\addtolength{\itemsep}{0.2\baselineskip}
			\item[{\rm (a)}] $3\mid q+1$ and $K.R \cong \SU_3(q)$.
			\item[{\rm (b)}] $R = \PSL_d(q)$ acts on $1$-spaces of $\mathbb{F}_q^d$, and $K.R$ is a quotient group of $\SL_d(q)$.
			\item[{\rm (c)}] $(G,H) \in\{ (3.S_6,S_5), (2.\mathrm{M}_{12},\mathrm{M}_{11})\}$.
		\end{itemize}
	\end{lem}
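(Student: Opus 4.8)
The plan is to study the preimage $K.R$ of $R$ in $G$ through its quasisimple core. Write $C = C_G(K)$ and let $C^\infty$ be its perfect core. The structural input is already in place: by Lemma \ref{l:semi}(v) the kernel $K = \soc(G_B^B)$ is elementary abelian, say $K \cong \bbZ_p^d$ with $|B| = p^d$; by Lemma \ref{l:C_G(K)_ne_K} we have $K.R \leqs C$, so $K$ is central in $K.R$ and $K.R = KC^\infty$ is a central extension of $R$ by $K$; and from the proof of Proposition \ref{p:red}, $C^\infty$ is quasisimple and transitive on $\Omega$, with $Z(C^\infty) = C^\infty \cap K$ and $C^\infty/Z(C^\infty) \cong R$.

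The first step is to show $Z(C^\infty) \ne 1$. If instead $Z(C^\infty) = 1$, then $C^\infty \cong R$ would be a nonabelian simple group that is both transitive on $\Omega$ and normal in $G$ (being characteristic in $C \normeq G$), forcing $G$ to be innately transitive, contrary to hypothesis. Hence $C^\infty$ is a proper quasisimple cover of $R$, and $Z(C^\infty)$ is a nontrivial elementary abelian $p$-subgroup of $K$ arising as a quotient of the Schur multiplier $m(R)$; in particular $p$ divides $|m(R)|$.

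Next I would run through the almost simple $2$-transitive socles $R = \soc(G^\calB)$ in Table \ref{tab:2-trans_as}, keeping only those whose Schur multiplier (recorded there and in Remark \ref{r:tab_as}) has order divisible by a prime $p$. For each such $R$ the decisive constraint is that $C^\infty$ admits a transitive action of degree $|\Omega| = p^d n$, with $n = |\calB|$, in which the central $p$-subgroup $Z(C^\infty)$ is semiregular. Since a point stabiliser then meets $Z(C^\infty)$ trivially, this is equivalent to requiring that the preimage in $C^\infty$ of some subgroup of the block stabiliser $R_B$ of index $p^d/|Z(C^\infty)|$ splits over $Z(C^\infty)$. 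Combined with the rank $3$ divisibility $p^d \mid |G_{B,B'}^\calB|$ supplied by Lemma \ref{l:DGLPP:l:4.2}, this splitting condition is what rules out the unwanted families: the covers $2.A_m$ ($m \ne 6,7$), $2.\Sp_6(2)$, ${}^2B_2(8)$, $2.\PSL_2(11)$, $\mathrm{M}_{22}$ and $\mathrm{HS}$ (and the non-generic covers of $\PSL_d(q)$) admit no complement of the required index, so none of them can act transitively in the necessary way.

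This should leave precisely the three asserted outcomes. For $R = \PSU_3(q)$ with $3 \mid q+1$ the only admissible cover is $C^\infty = \SU_3(q)$ with $Z(C^\infty) \cong \bbZ_3$; showing $K = Z(C^\infty)$ (so the block size is $3$) then gives $K.R \cong \SU_3(q)$, which is (a). For $R = \PSL_d(q)$ acting on the $1$-spaces of $\mathbb{F}_q^d$, the same analysis forces $K = Z(C^\infty)$, so that $K.R = C^\infty$ is perfect and therefore a quotient of the universal cover $\SL_d(q)$, which is (b). Finally, for the exceptional socles $R \in \{A_6, \mathrm{M}_{12}\}$ one identifies the covers $3.A_6$ and $2.\mathrm{M}_{12}$ and then invokes the rank $3$ condition to pin down $G$ itself, excluding $A_7$ and the intermediate groups and yielding $(G,H) = (3.S_6, S_5)$ or $(2.\mathrm{M}_{12}, \mathrm{M}_{11})$, which is (c). I expect the main obstacle to be precisely this uniform transitivity-and-complement analysis, and above all the exceptional-multiplier cases --- such as $A_6$, $\mathrm{M}_{12}$, ${}^2B_2(8)$ and $\PSL_3(4)$, where the $p$-part of the Schur multiplier exceeds that visible inside $\SL_d(q)$ --- which must be settled individually, together with the verification that $3.A_6$ and $2.\mathrm{M}_{12}$ genuinely survive while their close relatives do not.
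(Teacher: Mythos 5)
Your proposal is correct in substance and its structural first half coincides with the paper's proof: both use Lemma \ref{l:C_G(K)_ne_K} to centralise $K$ in $K.R$, both then face the dichotomy ``quasisimple cover versus split over $K$'' (the paper phrases it as $K.R$ quasisimple or $K.R\cong K\times R$), both kill the split case via the uniqueness of the minimal normal subgroup in Lemma \ref{l:semi}(i) --- your ``innately transitive'' contradiction is the same argument --- and both then sweep Table \ref{tab:2-trans_as} under the Schur multiplier constraint, arriving at the same three outcomes. Two points of comparison. First, you defer the step $K = Z(C^\infty)$, but it is immediate: $C^\infty\cap K$ is a non-trivial normal subgroup of $G$ contained in $K$, and $K$ is minimal normal by Lemma \ref{l:semi}(i); this is needed before asserting $K\leqs m(R)$, and it also brings the non-cyclic possibility $K\cong\bbZ_2^2$ (for ${^2}B_2(8)$ and $\PSL_3(4)$) properly into your framework. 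Second, your finishing move genuinely differs: the paper cites \cite[Lemma 2.10]{MPR_semi} for $2.A_m$ and settles every remaining case by {\sc Magma}, checking rank $3$ and semiprimitivity directly on the extensions $K.G^\calB$, whereas you propose a non-splitting analysis of the preimage of the point stabiliser over $Z(C^\infty)$. That criterion is only necessary, not sufficient: for example $3.A_6$ \emph{does} split over $A_5$, yet has rank $4$ (the centralising $A_5$ fixes each point of its block), so exclusions as well as survivors must be supplemented by the rank-$3$ conditions of Lemmas \ref{l:DGLPP:l:4.2} and \ref{l:semi}(iii), as you do for the survivors. Moreover, while some of your non-splitting claims are hand-verifiable (e.g.\ $\SL_2(11)$ has a unique involution, so contains no $A_5$; the preimage $2.A_{m-1}$ in $2.A_m$ is non-split, which is essentially the content of the cited MPR lemma), those for $2.\Sp_6(2)$, the covers of $\Ma_{22}$, $2.\mathrm{HS}$ and the exceptional covers of $\PSL_d(q)$ are asserted rather than proved --- this is exactly the residue the paper discharges computationally, and with those finitely many checks executed (by Atlas data or computer), your route delivers the lemma; note finally that in case (b) the conclusion ``perfect, hence a quotient of $\SL_d(q)$'' requires $m(R)=(d,q-1)$, so the exceptional-multiplier linear groups must stay in your individually-settled list, as you anticipated.
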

	
	\begin{proof}
		Lemma~\ref{l:C_G(K)_ne_K} shows that $K.R$ is quasisimple, and so $K$ is a subgroup of the Schur multiplier $m(R)$ of $R$. Note that $K$ is elementary abelian, so by inspecting Table \ref{tab:2-trans_as}, if $K$ is not cyclic, then $K = 2^2$ and $R\in\{{^2}B_2(8),\PSL_3(4)\}$. That is, $G = 2^2.G^\calB$ with $\soc(G^\calB)\in \{{^2}B_2(8),\PSL_3(4)\}$. With the aid of {\sc Magma}, one can check that no rank $3$ semiprimitive group arises in this setting. More specifically, we first use the function \texttt{ExtensionsOfElementaryAbelianGroup} to obtain all the abstract groups of the form $2^2.G^\calB$. Among these groups, it is routine to get the groups with a unique minimal normal subgroup and a subgroup of order $|H| = |G_B^\calB|$. For each of the remaining groups, it is easy to check that its action on the cosets of any subgroup of order $|G_B^\calB|$ is not semiprimitive.
		
		It follows that $K$ is cyclic. Since $G$ is not quasiprimitive, we have $K\ne 1$ is of prime order. Now we consider each case in Table \ref{tab:2-trans_as} in turn. First assume $K.R = 2.A_m$, with $R$ acting naturally on $m$ points. In this setting, either $G = 2.A_m$ or $G = 2.S_m$, and $G$ has degree $2m$. However, \cite[Lemma 2.10]{MPR_semi} shows that $G$ has no permutation representation of degree $2m$, so this case does not arise.
		
		Next, we note that the case where $R = \PSU_3(q)$ acting on isotropic $1$-spaces is recorded as case (a) (the condition $3\mid q+1$ is equivalent to $K\ne 1$). One can also see that if $R = \PSL_d(q)$ with its parabolic action on $1$-spaces and $m(R) = (d,q-1)$, then $K.R$ is a quotient group of $\SL_d(q)$, which is case (b).
		
		This remains finitely many cases listed in Table \ref{tab:2-trans_as} to consider. In some cases, we use the generators given in \cite{W_WebAt} to construct $G$ in {\sc Magma}. By arguing as above, one can check using {\sc Magma} that if $G$ is a rank $3$ group satisfying our conditions, then $(G,H)= (3.S_6,S_5)$ or $(2.\Ma_{12},\Ma_{11})$.	This gives case (c) and completes the proof.
	\end{proof}
	
	It is worth noting that the groups listed in Lemma \ref{l:K.R}(c) are genuine examples of semiprimitive permutation groups of rank $3$, which are not innately transitive. We first consider case (a) in Lemma \ref{l:K.R}, and we will show that this case does not arise.

	\begin{lem}
		\label{l:PSU_non-exist}
		Assume Hypothesis \ref{hypo:imprim}, where $G$ is semiprimitive but not innately transitive and $G^\calB$ is almost simple. If $3\mid q+1$ and $K.R \cong \SU_3(q)$, then $G$ is not of rank $3$ on $\Omega$.
	\end{lem}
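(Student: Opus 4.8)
The plan is to prove that condition~(i) of Lemma~\ref{l:DGLPP:l:4.2} fails for every such $G$, which forces $G$ to have rank greater than $3$. I first set up the geometry. Since $3\mid q+1$, the prime-order subgroup $K\leqs Z(K.R)=Z(\SU_3(q))$ must be the full centre $Z(\SU_3(q))\cong\bbZ_3$ (as $R=\PSU_3(q)$), so $|B|=|K|=3$; by Table~\ref{tab:2-trans_as} the blocks in $\calB$ are the $q^3+1$ isotropic $1$-spaces of the natural unitary module, permuted $2$-transitively by $R$. As $\SU_3(q)=K.R\normeq G$ is transitive on $\Omega$, it contains the torus stabilising two chosen isotropic points: fixing a hyperbolic pair $e,f$ together with a perpendicular anisotropic vector, the pointwise stabiliser in $\SU_3(q)$ of the blocks $B=B_{\la e\ra}$ and $B'=B_{\la f\ra}$ is $T=\{\mathrm{diag}(a,a^{-q},a^{q-1}):a\in\mathbb{F}_{q^2}^\times\}\cong\bbZ_{q^2-1}$, and $T\leqs G_{B,B'}$.

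Before analysing condition~(i), I note that condition~(ii) of Lemma~\ref{l:DGLPP:l:4.2} always holds here: by Lemma~\ref{l:semi}(iv) the point stabiliser $G_\beta$ maps isomorphically onto the block stabiliser $G_B^\calB$ and acts on $\calB$ through this image (since $K$ fixes $\calB$ pointwise), and $G_B^\calB$ is transitive on $\calB\setminus\{B\}$ because $G^\calB$ is $2$-transitive. Hence $G$ has rank $3$ if and only if $G_{B,B'}$ is transitive on $B\times B'$, a set of size $9$.

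The crux is a direct computation of this action. Writing $\mathbb{F}_{q^2}^\times=\la\lambda\ra$ and letting $C=\la\lambda^3\ra$ be its unique index-$3$ subgroup, I identify $B$ with $\mathbb{F}_{q^2}^\times/C\cong\bbZ_3$ so that a scalar $a$ acts on $B$ by translation by $\chi(a)$, where $\chi\colon\mathbb{F}_{q^2}^\times\to\bbZ_3$ is the quotient map, and similarly for $B'$ over $\la f\ra$. Then $\mathrm{diag}(a,a^{-q},a^{q-1})\in T$ scales $e$ by $a$ and $f$ by $a^{-q}$, so it acts on $B\times B'$ as translation by $(\chi(a),\chi(a^{-q}))$. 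The key arithmetic input is that $3\mid q+1$ gives $-q\equiv 1\pmod 3$, whence $\chi(a^{-q})=\chi(a)$ and $T$ acts through the diagonal of $\bbZ_3\times\bbZ_3$; its orbits are the three level sets of $\delta(x,y):=x-y\in\bbZ_3$. In particular $T$ already has three orbits on $B\times B'$, so $G=\SU_3(q)$ itself is not rank $3$.

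It remains to rule out fusion of these orbits by a larger $G$. Here $G_{B,B'}=K.(G_{B,B'}^\calB)$, and $G_{B,B'}^\calB$ lies in the stabiliser of the two points $\la e\ra,\la f\ra$ in $\PGammaU_3(q)$, which (as $\PSU_3$ is twisted, so carries no further graph automorphism) is generated by $T$, its diagonal extension coming from $\GU_3(q)$, and field automorphisms fixing both points. A diagonal element $\mathrm{diag}(a,a^{-q},c)$ acts exactly as $T$ does, by the diagonal translation $(\chi(a),\chi(a))$, hence preserves $\delta$; a field automorphism $x\mapsto x^{p^i}$ scales both labels by $p^i$, sending $\delta\mapsto p^i\delta$, and since $3\mid q+1=p^f+1$ forces $p\equiv-1\pmod 3$ this equals $\pm\delta$. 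Thus every element of $G_{B,B'}$ sends $\delta$ to $\pm\delta$, so the fibre $\delta^{-1}(0)$ is a proper nonempty $G_{B,B'}$-invariant subset of $B\times B'$. Therefore $G_{B,B'}$ is never transitive, condition~(i) fails, and $G$ does not have rank $3$. I expect the main obstacle to be the bookkeeping of these label actions, in particular establishing that the torus acts by the same translation on $B$ and on $B'$ (which rests on $-q\equiv1\pmod3$) and that no outer element can fuse $\delta=0$ with $\delta\neq0$ (which rests on $p\equiv-1\pmod3$).
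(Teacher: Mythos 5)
There is a genuine gap: your identification of the block $B$ with $\mathbb{F}_{q^2}^\times/C$ is false for infinitely many $q$ allowed by the hypothesis, namely whenever $9\mid q+1$. Write $K=\la\omega I\ra$ with $\omega=\lambda^{(q^2-1)/3}$ of order $3$. If $9\mid q+1$ then $3\mid (q^2-1)/3$, so $\omega\in C=\la\lambda^3\ra$ and $\chi(\omega)=0$: in your labelled model $K$ acts \emph{trivially} on every block, whereas in the actual $\Omega$ (assuming rank $3$ for contradiction) the kernel $K$ is regular on $B$, being semiregular by semiprimitivity with $|K|=|B|=3$. So for $q\equiv 8\pmod 9$ (e.g.\ $q=8,17$) your model is not isomorphic to $\Omega$ as an $S$-set, where $S=K.R\cong\SU_3(q)$, and the whole label computation does not apply. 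What is true there, and what a complete proof must establish separately, is that no such $\Omega$ exists at all: $S\normeq G$ is transitive (it is far too large to be semiregular), so $S_B=K\times S_\beta$ with $S_B\cong [q^3]{:}\bbZ_{q^2-1}$; the Sylow $3$-subgroups of $S_B$ are cyclic with unique order-$3$ subgroup $K$, while $K\times S_\beta$ can have cyclic Sylow $3$-subgroups only if $3\nmid|S_\beta|$, which forces $(q+1)_3=3$. This is precisely the opening step of the paper's proof, which you omit. Conversely, when $(q+1)_3=3$ your identification is correct but unjustified as stated: one should observe that $S_\beta$ is then the unique Hall $3'$-subgroup of $S_B$, so the $S$-set $\Omega$ is forced and coincides with your model.

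A second, lesser gap concerns fusion: you compute the action of $G_{B,B'}$ on the labels as if every element of $G$ acts on $\Omega$ through a semilinear (semi-)similarity of the unitary space, but what you actually control is only the induced action on $\calB$, via $G^\calB\leqs\PGammaU_3(q)$. Since $G\leqs N_{\Sym(\Omega)}(S)$, what is needed is the unitary analogue of Lemma \ref{l:normalsl}: that $N_{\Sym(\Omega)}(S)$ is the image of the semi-similarity group, or at least that $C_{\Sym(\Omega)}(S)$ is the image of the scalars; the latter follows from $C_{\Sym(\Omega)}(S)\cong N_S(S_\beta)/S_\beta=S_B/S_\beta$, using that $S_\beta$ contains the full unipotent radical and hence determines $B$. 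Without this, the assertion that a field automorphism ``scales both labels by $p^i$'' is not established for an arbitrary element of $G$. With both repairs your argument is correct and genuinely different from the paper's: your invariant $\delta$, multiplied by $p^i\equiv\pm1\pmod 3$ by every element of $G_{B,B'}$, makes $\delta^{-1}(0)$ invariant, so condition (i) of Lemma \ref{l:DGLPP:l:4.2} fails and no extension $G$ can fuse the orbits. The paper instead, after the Sylow step above, shows $(|G_B|)_3=3^2$ and $\PGU_3(q)\leqs G^\calB$, then reaches a contradiction via a lifted involution $\widehat{x}$ interchanging $B$ and $B'$ that centralises a Sylow $3$-subgroup $Q\cong\bbZ_3^2$ of $G_{B,B'}$. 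Your route is more transparent where it applies, but as written it is incomplete at exactly the point where the paper's proof begins.
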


	\begin{proof}
		Suppose $G$ has rank $3$ on $\Omega$ and set $S:=G^{\infty}=K.R\cong\SU_3(q)$, noting that $|S:S_B|=q^3+1$, which implies that $S_B\cong [q^3]{:}\bbZ_{q^2-1}$.
		Then it is easy to see that a Sylow $3$-subgroup $P$ of $S_B$ is cyclic, and $K\normeq P$ is regular on $B$. It follows that $P = K\cong\bbZ_3$, and so $(q+1)_3 = 1$. Thus, $q\equiv 2$ or $5\pmod 9$, so $3\nmid f$ by noting that there is no integer $q_0$ such that $q_0^3\equiv 2$ or $5\pmod 9$, where $q = p^f$ for prime $p$. In particular, this implies that $(|G_B|)_3\leqs 2$, as $|\Out(\PSU_3(q))| = 3f$.
		Moreover, Lemma \ref{l:semi}(iii) implies that $G_{B,B'}$ has a subgroup of index $|K|^2=9$, which yields $(|G_B|)_3\geqs 2$.	Thus, we have $(|G_B|)_3 = 2$, and so $\PGU_3(q)\leqs G^\calB$ and a Sylow $3$-subgroup $Q$ of $G_B$ has order $9$.
		Since $Q = Q_B=K{:}Q_\beta$ (where $\beta\in B$) with $K\cong\bbZ_3$, we obtain that $Q\cong\bbZ_3^2$.
		
		Let $\{v_1,v_2,v_3\}$ be a basis of $\mathbb{F}_{q^2}^3$. We may identify the unitary form on $\mathbb{F}_{q^2}^3$ with
		\begin{equation*}
		(a_1v_1+a_2v_2+a_3v_3)(b_1v_1+b_2v_2+b_3v_3) = a_1b_3^q+a_2b_2^q+a_3b_1^q
		\end{equation*}
		for $a_i,b_j\in\mathbb{F}_{q^2}$. We also identify $\calB$ with the set of $1$-isotropic subspaces. Let $B = \la v_1\ra $ and $B' = \la v_3\ra$. Recall that $\PGU_3(q)\leqs G^\calB$ and note that $G_{B,B'}^\calB\cap \PGU_3(q)\cong\mathbb{Z}_{q^2-1}$ comprises the images of diagonal matrices $\mathrm{diag}(a,1,a^{-q})$ for $a\in\mathbb{F}_{q^2}^\times$, which has a Sylow subgroup of order $3$. Thus, a Sylow subgroup of $G_{B,B'}$ has order $9$ as $3\nmid f$. With this in mind, we may assume that $Q\leqs G_{B,B'}$. Now let $x\in\PGU_3(q)$ be the image of
		\begin{equation*}
		\begin{pmatrix}
		0&0&1\\
		0&1&0\\
		1&0&0
		\end{pmatrix}
		\in\GU_3(q),
		\end{equation*}
		so $|x| = 2$. Note that $\mathrm{diag}(a,1,a^{-q})^x = \mathrm{diag}(a,1,a^{-q})^{-q}$, and for any $\lambda\in\mathbb{F}_q^\times$ with $|\lambda| = 3$ we have $\lambda^{-q} = \lambda$ since $3\mid q+1$. It follows that $x$ centralises $Q/K$, which is a Sylow $3$-subgroup of $G_{B,B'}^\calB$.
		
		Since $(|K|,|x|) = 1$, there exists a pre-image $\widehat{x}\in G$ of $x = K\widehat{x}$ such that $|\widehat{x}| = |x| = 2$. As $K = Z(K.R)$ and $|\PGU_3(q):R| = 3$ is coprime to $|K|-1 = 2$, we have $K = Z(K.\PGU_3(q))$ and thus $[\widehat{x},K] = 1$. Moreover, we have $[x,Q/K] = 1$ as noted above, and hence $\widehat{x}$ induces a unipotent element in $\Aut(Q)\cong\GL_2(3)$, which implies that the induced automorphism is trivial as $|\widehat{x}| = 2$. Therefore, we have $[\widehat{x},Q]=1$.
		
		Now let $\beta\in B$. By Lemma \ref{l:semi}(iii), the group $G_\beta\cap G_\beta^{\widehat{x}}$ has index $9$ in $G_{B,B'} = G_B\cap G_B^{\widehat{x}}$, so $3$ does not divide $|G_\b\cap G_\b^{\widehat{x}}|$ since $(|G_B|)_3 = 2$.
		Thus, $Q_\beta\cap Q_\beta^{\widehat{x}}=1$.
		However, since $|Q_\beta|=|Q|/|K|=3$ and $[Q_\beta,\widehat{x}]=1$, we have $Q_\beta\cap Q_\beta^{\widehat{x}}=Q_\beta$, a contradiction.	
		This concludes that $G$ is not of rank $3$ on $\Omega$.
	\end{proof}
	
	Finally, we turn to case (b) recorded in Lemma \ref{l:K.R}. Here $R\cong\PSL_d(q)$, $K\cong\bbZ_r$ for some prime $r\mid(d,q-1)$ and $G^{\infty}=K.R$ is a quotient of $\SL_d(q)$. Recall that $G^\infty \cong \overline{\SL_d(q)}$ with the notation in Proposition \ref{p:PSL}.
	
	\begin{lem}
		\label{l:isoolS}
		If $G$ has rank $3$ on $\Omega$ and satisfies case {\rm (b)} in Lemma \ref{l:K.R}, then $G$ is permutation isomorphic to a group of rank $3$ as described in Proposition \ref{p:PSL}.
	\end{lem}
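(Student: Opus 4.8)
The plan is to show that the action of $G^{\infty}$ on $\Omega$ is permutation isomorphic to the natural action of $\overline{\SL}$ on the set $\Delta$ of $C$-orbits on $\mathbb{F}_q^d\setminus\{0\}$, and then to invoke Lemma \ref{l:normalsl}. Granting this identification, we may regard $G$ as a subgroup of $\Sym(\Delta)$ in which $G^{\infty}=\overline{\SL}$ is normal; since $G^{\infty}$ is characteristic in $G$, Lemma \ref{l:normalsl} gives $G\leqs N_{\Sym(\Delta)}(\overline{\SL})=\overline{\GammaL}$, so that $\overline{\SL}\leqs G\leqs\overline{\GammaL}$. As $G$ has rank $3$ and is not innately transitive, Proposition \ref{p:PSL} then yields exactly the desired conclusion, the arithmetic conditions on $r$ being those recorded there.

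So the crux is the permutation isomorphism $(G^{\infty},\Omega)\cong(\overline{\SL},\Delta)$. Since $G^{\infty}=\overline{\SL}=\SL_d(q)/Z_0$ with $Z_0:=C\cap\SL_d(q)$, I would pull both actions back to $\SL_d(q)$ through the quotient map $\pi\colon\SL_d(q)\to G^{\infty}$ and compare point stabilisers. Identifying $\calB$ with the projective points of $\mathbb{F}_q^d$ compatibly with the induced $\PSL_d(q)$-action, the block stabiliser $(G^{\infty})_B$ of the block $B\leftrightarrow\langle v\rangle$ is $\pi(\SL_{\langle v\rangle})$, where $\SL_{\langle v\rangle}$ is the line stabiliser. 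Writing $U=(G^{\infty})_\omega$ with $\omega\in B$, the regularity of $K=Z(G^{\infty})\cong\bbZ_r$ on $B$ shows $U$ is a complement to $K$ in $(G^{\infty})_B$, so $\widetilde U:=\pi^{-1}(U)$ has index $r$ in $\SL_{\langle v\rangle}$ and contains $Z_0$. The goal is to prove $\SL_v\leqs\widetilde U$: for then $\widetilde U/\SL_v$ is the unique subgroup of index $r$ in the cyclic group $\SL_{\langle v\rangle}/\SL_v\cong\bbZ_{q-1}$, forcing $\widetilde U=\SL_{\overline v}$. Since $\SL_{\overline v}$ is precisely the point stabiliser for the natural action on $\Delta$, the two transitive $\SL_d(q)$-actions (with common kernel $Z_0$) share a point stabiliser and are therefore permutation isomorphic, which transports to the required equivalence $(G^{\infty},\Omega)\cong(\overline{\SL},\Delta)$.

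The main obstacle is thus establishing $\SL_v\leqs\widetilde U$. For $d\geqs 3$ I would argue via perfect residuals. Here $\SL_{\langle v\rangle}^{\infty}=\SL_v$, because $\SL_{\langle v\rangle}/\SL_v\cong\mathbb{F}_q^\times$ is abelian and $\SL_v=[q^{d-1}]{:}\SL_{d-1}(q)$ is perfect, the only small exceptions $(d,q)=(3,2),(3,3)$ being excluded since there $(d,q-1)=1$ admits no prime divisor $r$. Consequently $((G^{\infty})_B)^{\infty}=\pi(\SL_v)$, and $\pi(\SL_v)\cap K=1$ since $\SL_v\cap Z(\SL_d(q))=1$. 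Writing $P_1:=(G^{\infty})_B/K$, every perfect subgroup of a finite group lies in its perfect residual, so $U^{\infty}\leqs((G^{\infty})_B)^{\infty}=\pi(\SL_v)$; as $U\cong P_1$, both $U^{\infty}$ and $\pi(\SL_v)$ project isomorphically onto $P_1^{\infty}$, and comparing orders gives $U^{\infty}=\pi(\SL_v)$, whence $\SL_v\leqs\widetilde U$.

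The remaining case $d=2$ (where $r=2$, $q$ is odd, and $\pi$ is an isomorphism, so $(G^{\infty})_B$ is a Borel subgroup of $\SL_2(q)$) is handled by a Sylow argument: since $r\mid q-1$ we have $r\ne p$, so the index-$r$ subgroup $U$ contains a full Sylow $p$-subgroup of $(G^{\infty})_B$, namely its unique normal unipotent radical, which is exactly $\SL_v$. In both cases $\SL_v\leqs\widetilde U$, so $\widetilde U=\SL_{\overline v}$ as above; the identification, and hence the lemma, follows. I expect the perfect-residual computation together with the bookkeeping of the small and $d=2$ cases to be the delicate part, while the final passage through Lemma \ref{l:normalsl} and Proposition \ref{p:PSL} is routine.
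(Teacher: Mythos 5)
Your proof is correct, and it shares the paper's outer frame — reduce, via Lemma \ref{l:normalsl}, to showing that $(G^{\infty},\Omega)$ is permutation isomorphic to $(\overline{\SL_d(q)},\Delta)$, then invoke Proposition \ref{p:PSL} — but your identification of the point stabiliser proceeds by a genuinely different mechanism. The paper does not use the block system at this step: starting from $T_\beta\cong T_B^\calB\cong[q^{d-1}]{:}\GL_{d-1}(q)/\bbZ_{(d,q-1)}$, it lifts the normal $p$-subgroup of $T_\beta$ to $\SL_d(q)$, observes that the lifted stabiliser fixes a subspace whose dimension an order count forces to be $1$ or $d-1$, and thereby places the lift inside a parabolic $M$ of type $P_1$ up to $\Aut(\SL_d(q))$-conjugacy; it then proves that $M\cong[q^{d-1}]{:}\GL_{d-1}(q)$ has a \emph{unique} subgroup of index $r$ (coprimality absorbs $[q^{d-1}]$, insolubility of $\GL_{d-1}(q)$ does the rest, with $(d,q)=(3,2),(3,3)$ excluded by $r\mid(d,q-1)$ — exactly your small-case exclusion). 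You instead get the parabolic for free from the $2$-transitive action on $\calB$ — the pre-image of the block stabiliser is $\SL_{\langle v\rangle}$ — and locate $U$ inside it via the complement structure $(G^{\infty})_B=K{:}U$ coming from the regularity of $K$ on $B$; your perfect-residual computation $\SL_{\langle v\rangle}^{\infty}=\SL_v$ together with the order comparison through $P_1^{\infty}$ forces $\SL_v\leqs\widetilde U$, after which the uniqueness argument lives in the cyclic quotient $\SL_{\langle v\rangle}/\SL_v\cong\bbZ_{q-1}$ rather than in $M$ itself, with a clean Sylow argument disposing of $d=2$. Your route is the more direct localisation (no invariant-subspace argument), at the cost of fixing the identification of $\calB$ with the projective points compatibly with $\pi$ — i.e.\ possibly twisting $\pi$ by a graph automorphism, which is legitimate since every automorphism of $\PSL_d(q)$ lifts to $\SL_d(q)$ and preserves the characteristic subgroup $Z_0$ — whereas the paper pays for the same wrinkle with its blanket $\Aut(\SL_d(q))$-conjugacy bookkeeping; the two treatments are equally careful on this point, and both take as given the identification $G^{\infty}\cong\overline{\SL_d(q)}$ that the paper records immediately before the lemma. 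As a small bonus, your constraint $U\cap K=1$ combined with $\SL_v\leqs\widetilde U$ silently re-derives the arithmetic condition $r\nmid\frac{q-1}{(d,q-1)}$ of Proposition \ref{p:PSL}, since $\widetilde U\cap Z(\SL_d(q))$ is then forced to have order $(d,q-1)/r$, so your argument closes a boundary case that the paper's ``recall'' handles implicitly.
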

	
	\begin{proof}
		First note that $G\leqs N_{\Sym(\Omega)}(G^{\infty})$, so in view of Lemma \ref{l:normalsl}, we only need to show that $T:=G^{\infty}$ is permutation isomorphic to $\overline{\SL_d(q)}$ described in Proposition \ref{p:PSL}. In other words, it suffices to show that $T_\beta$ is $\Aut(T)$-conjugate to a point stabiliser of $\overline{\SL_d(q)}$, where $\b\in\Omega$.
		
		Note that $T_B = K{:}T_\beta$ and
		\begin{equation*}
		T_\beta\cong (K{:}T_\beta)/K = T_B^\calB \cong [q^{d-1}]{:}\GL_{d-1}(q)/\bbZ_{(d,q-1)}.
		\end{equation*}
		Thus, $T_\beta$ has a normal $p$-subgroup $P$ of order $q^{d-1}$, and so the pre-image $\widetilde{T_\beta}$ of $T_\beta\leqs\overline{\SL_d(q)}$ in $\SL_d(q)$ also has a normal $p$-subgroup $\widetilde{P}$. Now the centre of $\mathbb{F}_q^d{:}\widetilde{P}$ is fixed by $\widetilde{T_\beta}$, whence $\widetilde{T_\beta}$ stabilises a non-trivial proper subspace $W$ of $\mathbb{F}_q^d$. If $\dim W\notin\{1,d-1\}$, then it is easy to see that $|\SL_d(q)_W| < |\widetilde{T_\beta}|$, a contradiction. It follows that $\dim W = 1$ or $d-1$, and hence $\widetilde{T_\beta}$ is $\Aut(\SL_d(q))$-conjugate to a subgroup of a maximal subgroup $M$ of $\SL_d(q)$ of type $P_1$. By repeating the above argument, one can see that the pre-image of a point stabiliser of $\overline{\SL_d(q)}$ in $\SL_d(q)$ is also $\Aut(\SL_d(q))$-conjugate to a subgroup of $M$.
		
		Therefore, it suffices to show that $M$ has a unique subgroup of index $r = |K|$. Recall that $M\cong [q^{d-1}]{:}\GL_{d-1}(q)$. If $d = 2$, then $M\cong [q]{:}\bbZ_{q-1}$ and it is evident that $M$ has a unique subgroup of index $r = 2$. If $d\geqs 3$, then $(d,q)\ne (3,2)$ or $(3,3)$ as $r\mid (d,q-1)$, and so $\GL_{d-1}(q)$ is insoluble. We claim that $M$ also has a unique subgroup of index $r$. To see this, note that if $M_0$ is a subgroup of $M$ of index $r$, then $[q^{d-1}] \normeq M_0$ as $(r,q) = 1$. Hence, $M_0/[q^{d-1}]$ is a subgroup of $M/[q^{d-1}]\cong \GL_{d-1}(q)$ of index $r$. Now the claim follows, as $\GL_{d-1}(q)$ has a unique subgroup of index $r$. This completes the proof.
	\end{proof}
	
	\begin{prop}
		\label{p:GB_as_semi}
		Assume Hypothesis \ref{hypo:imprim}, where $G$ is semiprimitive but not innately transitive, $H$ is a point stabiliser of $G$, and assume $G^\calB$ is almost simple. If $G$ has rank $3$ on $\Omega$, then either $G$ is a group as described in Proposition \ref{p:PSL}, or $(G,H) = (3.S_6,S_5)$ or $(2.\mathrm{M}_{12},\mathrm{M}_{11})$.
	\end{prop}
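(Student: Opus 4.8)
The plan is to prove the proposition by directly combining the four results established earlier in this subsection, each of which disposes of one stage of the analysis. Under the standing hypotheses the point stabiliser $H$ is a point stabiliser of the rank $3$ action, and Lemma \ref{l:semi}(v) ensures that the kernel $K$ is abelian so that the notation $R = \soc(G^\calB)$ and the central extension $K.R$ are meaningful. The first step is to rule out the degenerate possibility $C_G(K) = K$: I would invoke Lemma \ref{l:C_G(K)_ne_K}, which shows that whenever $G$ has rank $3$ we must have $K.R \leqs C_G(K)$. This inclusion places $K$ inside the centre of $K.R$ and is precisely the input needed to run the covering-group analysis.

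With $K.R \leqs C_G(K)$ secured, the second step is to feed this into Lemma \ref{l:K.R}, which lists the only three possibilities (a), (b), (c) for the pair $(G, K.R)$. Case (c) already yields one of the exceptional pairs $(G,H) = (3.S_6, S_5)$ or $(2.\Ma_{12}, \Ma_{11})$ recorded in the statement, so nothing further is required there. For case (a), where $3 \mid q+1$ and $K.R \cong \SU_3(q)$, I would simply apply Lemma \ref{l:PSU_non-exist} to conclude that no rank $3$ group arises; hence this branch is vacuous and may be discarded.

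The remaining step treats case (b), in which $R \cong \PSL_d(q)$ acts on the $1$-spaces of $\mathbb{F}_q^d$ and $K.R$ is a quotient of $\SL_d(q)$. Here Lemma \ref{l:isoolS} shows directly that $G$ is permutation isomorphic to one of the rank $3$ groups described in Proposition \ref{p:PSL}, giving the first alternative of the conclusion. Assembling the three cases then completes the proof. I do not expect a genuine obstacle at this final stage: the substantive difficulties have deliberately been isolated into the preceding lemmas — in particular the delicate Sylow $3$-subgroup argument of Lemma \ref{l:PSU_non-exist} that eliminates $\SU_3(q)$, and the normaliser identification in Lemma \ref{l:isoolS} that rests on Lemma \ref{l:normalsl} — so the proof of the proposition itself is essentially a bookkeeping synthesis. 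The only point requiring care is verifying that cases (a)--(c) of Lemma \ref{l:K.R} are exhaustive and that the semiprimitive, not-innately-transitive constraints propagate correctly through each branch.
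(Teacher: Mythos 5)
Your proposal is correct and takes essentially the same route as the paper, whose proof of Proposition \ref{p:GB_as_semi} is precisely the combination of Lemmas \ref{l:K.R}, \ref{l:PSU_non-exist} and \ref{l:isoolS}; your explicit preliminary appeal to Lemma \ref{l:C_G(K)_ne_K} is the same input the paper feeds (implicitly) into the hypothesis $K.R\leqs C_G(K)$ of Lemma \ref{l:K.R}. Nothing further is needed.
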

	
	\begin{proof}
		Combine Lemmas \ref{l:K.R}, \ref{l:PSU_non-exist} and \ref{l:isoolS}.
	\end{proof}

	\subsection{The groups with $G^\calB$ affine}	
	
	Now we continuously assume Hypothesis \ref{hypo:imprim}, where $G$ is semiprimitive, and we turn to the case where $G^\calB = G/K$ is affine with socle $M\cong \bbZ_r^m$ for some prime $r$. As noted in \cite[Lemma 2.5(a)]{BDP_innately}, $G$ is not innately transitive, and we recall that $K\cong \bbZ_p^k$ for some prime $p$.

	\begin{lem}
		\label{l:p=r_affine}
		Assume Hypothesis~\ref{hypo:imprim}, where $G$ is semiprimitive and $G^\calB$ is affine. 
		If $G$ has rank $3$ on $\Omega$, then $G$ has a regular normal subgroup $N$, where $N$ is a $p$-group.
	\end{lem}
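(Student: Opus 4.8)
The plan is to construct a regular normal subgroup explicitly as the preimage of $\soc(G^\calB)$, and then to reduce everything to proving $p=r$, which is the genuine content of the statement. Indeed, a regular subgroup has order $|\Omega|=|K|\,|M|$, which is divisible by $r$ once $m\geqs 1$, so if $p\ne r$ then $G$ admits no regular $p$-subgroup whatsoever; hence the assertion is equivalent to $p=r$. To set this up, write $\widetilde{M}$ for the preimage in $G$ of $M=\soc(G^\calB)\cong\bbZ_r^m$ under the quotient $G\to G/K=G^\calB$. Since $M$ is characteristic in $G^\calB$, we have $\widetilde{M}\normeq G$; since $M$ is regular on $\calB$ and $K$ is regular on each block, $\widetilde{M}$ is transitive on $\Omega$; and $|\widetilde{M}|=|K|\,|M|=|B|\,|\calB|=|\Omega|$, so $\widetilde{M}$ is a regular normal subgroup. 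It therefore suffices to prove $p=r$, for then $\widetilde{M}$ is a $p$-group and we take $N=\widetilde{M}$.

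Next I would locate $\widetilde{M}$ inside $C:=C_G(K)$. As in the proof of Lemma \ref{l:C_G(K)_ne_K} we have $L=K$ here, and $K_{(B)}=1$ because $K$ is regular on $B$. If $C_G(K)=K$, then (since $L=K$) the hypotheses $C_G(L)=L$, $K_{(B)}=1$ and $G^\calB$ affine of Lemma \ref{l:C=L,KB=1,Xaffine} all hold, which forces $G$ not to have rank $3$ --- a contradiction. Hence $C>K$, so $1\ne C/K\normeq G/K$. Because $G^\calB=G/K$ is a primitive affine group, its socle $M$ is the unique minimal normal subgroup, whence $M\leqs C/K$ and therefore $\widetilde{M}\leqs C=C_G(K)$. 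In particular $K\leqs Z(\widetilde{M})$.

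Finally I would derive a contradiction from the assumption $p\ne r$. Since $K\leqs Z(\widetilde{M})$ and $\widetilde{M}/K\cong\bbZ_r^m$ is abelian, the quotient $\widetilde{M}/Z(\widetilde{M})$ is abelian, so $\widetilde{M}$ is nilpotent and hence the direct product of its Sylow subgroups. With $p\ne r$ this yields $\widetilde{M}=K\times R$ where $R=O_r(\widetilde{M})\cong\bbZ_r^m$ is characteristic in $\widetilde{M}$ and so normal in $G$. But $R\ne 1$ has order $r^m<|\Omega|$ (using $K\ne 1$, as $G$ is not quasiprimitive), so $R$ is not transitive; semiprimitivity then forces $R$ to be semiregular, hence intransitive, and Lemma \ref{l:semi}(i) identifies the unique non-trivial intransitive normal subgroup of $G$ as $K$. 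Thus $R=K$, which is absurd since $\gcd(|R|,|K|)=1$ with both non-trivial. Therefore $p=r$, $\widetilde{M}$ is a $p$-group, and $N=\widetilde{M}$ is the desired regular normal $p$-subgroup.

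I expect the crux to be the middle step, namely establishing $C_G(K)>K$ (equivalently $\widetilde{M}\leqs C_G(K)$): this is precisely where the rank $3$ and semiprimitivity hypotheses enter, channelled through Lemma \ref{l:C=L,KB=1,Xaffine}, and without the resulting centrality $K\leqs Z(\widetilde{M})$ one has no way to conclude that $\widetilde{M}$ is nilpotent. Once that centrality is secured, the nilpotency decomposition and the semiregularity/uniqueness argument that finishes the proof are routine.
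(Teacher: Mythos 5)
Your proof is correct, and it constructs the same regular normal subgroup as the paper --- the preimage $N=K.M$ of $\soc(G^\calB)$, regular because $|N|=|K||M|=|B||\calB|=|\Omega|$ --- but it settles the crux ($p=r$) by a genuinely different argument. The paper applies Lemma \ref{l:regularnormal} directly: since $G_\beta\leqs\Aut(N)$ has exactly three orbits on $N$, the group $N$ is either a $p$-group or a Frobenius $\{p,q\}$-group with complement $\bbZ_q$, and in the Frobenius case $M\cong\bbZ_q$ forces $G^\calB\cong\bbZ_q{:}\bbZ_{q-1}$, so $|G_\beta|\leqs q-1<p^k-1$ and $G_\beta$ has at least two orbits on the non-identity elements of $K$, giving rank at least $4$. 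You instead show $C_G(K)>K$ by feeding the contrary case into Lemma \ref{l:C=L,KB=1,Xaffine} (with $L=K$ and $K_{(B)}=1$, exactly as in the paper's own proof of Lemma \ref{l:C_G(K)_ne_K} for the almost simple case), deduce $K\leqs Z(N)$ from the uniqueness of the minimal normal subgroup of the primitive affine group $G^\calB$, and then use class-$2$ nilpotency together with Lemma \ref{l:semi}(i): if $p\ne r$, the Sylow $r$-subgroup of $N$ would be a non-trivial intransitive normal subgroup of $G$ of order coprime to $|K|$, contradicting the uniqueness of $K$. The trade-off: the paper's route is locally cheaper because Lemma \ref{l:regularnormal} is needed anyway to deliver the ``special $p$-group'' conclusion of Theorem \ref{thm:semi}(c), whereas your route avoids the three-automorphism-orbit classification at this step but imports the heavier machinery behind Lemma \ref{l:C=L,KB=1,Xaffine} (the transitive linear group analysis and, through Lemma \ref{l:trans_linear_prime-power_index}, Guralnick's theorem); in return you record the extra structural fact that the extension $1\to K\to N\to M\to 1$ is central, which the paper's proof does not make explicit. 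Two cosmetic points, neither a gap: your ``semiregular, hence intransitive'' for $R$ is redundant, since intransitivity already follows from $|R|=r^m<|\Omega|$, and what semiprimitivity actually supplies is Lemma \ref{l:semi}(i), whose hypothesis that $G$ is not innately transitive is guaranteed here by the section's standing observation citing \cite{BDP_innately}; and $K_{(B)}=1$ is best justified by semiregularity of $K$ on $\Omega$ (an intransitive normal subgroup of a semiprimitive group), transitivity of $K$ on $B$ alone not sufficing.
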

	
	\begin{proof}
		Let $N := K.M$ the preimage of $M$ in $G$. Note that $M$ is transitive on $\calB$ and $K$ is transitive on $B\in\calB$, it follows that $N$ is transitive on $\Omega$.
		Thus, $N$ is a regular normal subgroup of $G$ since $|N|=|K||M|=|B||\calB|=|\Omega|$.
		
		Suppose that $G$ has rank $3$ on $\Omega$ and let $\beta\in\Omega$. Then $G_{\beta}$ is isomorphic to a subgroup of $\Aut(N)$, acting on $N$ with exactly three orbits.
		Hence, $N$ is either a Frobenius $\{p,q\}$-group or a $p$-group by Lemma~\ref{l:regularnormal}.
		We argue by contradiction and suppose that $N$ is a Frobenius $\{p,q\}$-group.
		Note that $K\normeq G$ and $G$ acts on $N=K.M$ with exactly three orbits.
		Hence, $G$ acts transitively on the non-identity elements of $K$, and $M = N/K$ is a $q$-group.
		By Lemma~\ref{l:regularnormal}, we have $N=K{:}Q$ with $M\cong Q\cong\bbZ_q$.
		This implies that $G^{\calB}\cong\bbZ_q{:}\bbZ_{q-1}$ as $M\cong\bbZ_q$ is the socle of $G^\calB$.
		That is, $G = N{:}G_\beta$ is isomorphic to a subgroup of $(\bbZ_p^k{:}\bbZ_q){:}\bbZ_{q-1}$, where $\beta\in\Omega$. 
		However, since $q-1<p^k-1$, it is clear that there are at least $4$ orbits of $G_\beta$ on $N$, which implies that $G$ is not of rank $3$, a contradiction. 
		Therefore, we conclude that $N$ is a $p$-group.
	\end{proof}

	In order to complete the proof of Theorem \ref{thm:semi} and in view of Lemma \ref{l:regularnormal}, it suffices to consider the case where $m = k$ and $N \cong \bbZ_{p^2}^k$. In this setting, we have $\Aut(N)\cong\GL_k(\bbZ/p^2\bbZ)\cong \bbZ_p^{k^2}.\GL_k(p)$, and we first list some preliminary lemmas.
	
	\begin{lem}
		\label{l:S=T}
		Assume Hypothesis \ref{hypo:imprim}, where $G$ is semiprimitive and $G^\calB$ is affine. If  $N\cong\bbZ_{p^2}^k$ and $G$ has rank $3$ on $\Omega$, then $G_B^\calB\cong G_\beta^B$ and $G_B^\calB$ is permutation isomorphic to one of the transitive linear groups listed in Corollary \ref{c:trans_linear_prime-power}.
	\end{lem}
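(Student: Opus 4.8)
The plan is to use the regular normal subgroup $N\cong\bbZ_{p^2}^k$ coming from Lemma~\ref{l:p=r_affine} (so here $r=p$ and $|B|=|K|=|\calB|=p^k$), identifying $\Omega$ with $N$ so that $\beta=1$ and $G=N{:}G_\beta$ with $G_\beta\leqs\Aut(N)\cong\GL_k(\bbZ/p^2\bbZ)$. The first step is to locate $K$ inside $N$. Since $N/K\cong M\cong\bbZ_p^k$ is elementary abelian we have $K\geqs\Phi(N)=pN$, and as $|K|=p^k=|pN|$ this forces $K=pN=\Phi(N)=\Omega_1(N)$, the unique elementary abelian subgroup of order $p^k$ in $N$. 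In particular $K$ is characteristic in $N$, the block $B=\beta^K$ equals $K$, and $\calB$ is the set of cosets of $K$ in $N$. Counting orders also gives $G_B=K{:}G_\beta$, since $K\leqs G_B$, $G_\beta\leqs G_B$, $K\cap G_\beta=1$ and $|K||G_\beta|=p^k|G_\beta|=|G_B|$.

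For the first assertion I would compare the two $G_\beta$-modules $K=pN$ and $N/K=N/pN$. Multiplication by $p$ induces an isomorphism $\theta\colon N/pN\to pN$, $x+pN\mapsto px$, and $\theta$ is $\Aut(N)$-equivariant because $a(px)=p\,a(x)$ for every $a\in\Aut(N)$; in particular it is $G_\beta$-equivariant. Hence the action of $G_\beta$ on $B=pN$ and its action on $\calB=N/pN$ are permutation isomorphic, and $\theta$ identifies their kernels, so $G_\beta^B\cong G_\beta^\calB$ as permutation groups. Finally $K$ fixes every block, so the action of $G_B=K{:}G_\beta$ on $\calB$ factors through $G_\beta$, giving $G_B^\calB=G_\beta^\calB\cong G_\beta^B$, as required.

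For the second assertion, note first that $G_B^B$ is the affine $2$-transitive group on $B=K$ with socle $K^B\cong\bbZ_p^k$, whose point stabiliser $(G_B^B)_\beta=G_\beta^B\leqs\GL_k(p)$ is a transitive linear group; by the isomorphism above, $G_B^\calB$ is therefore permutation isomorphic to a transitive linear group. It remains to check the divisibility $p^k\mid|G_B^\calB|$ required to apply Corollary~\ref{c:trans_linear_prime-power}. Since $G$ has rank $3$, Lemma~\ref{l:DGLPP:l:4.2} gives that $G_{B,B'}$ is transitive on $B\times B'$, so $p^{2k}$ divides $|G_{B,B'}|$. Reducing modulo $K$ yields $|G_{B,B'}|=|K\cap G_{B,B'}|\cdot|(G^\calB)_{B,B'}|$, where $K\cap G_{B,B'}$ is a $p$-group of order at most $p^k$ and $(G^\calB)_{B,B'}$ has index $p^k-1$ in $G_B^\calB=(G^\calB)_B$ by the $2$-transitivity of $G^\calB$ on $\calB$. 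Comparing $p$-parts (and using $p\nmid p^k-1$) gives $(|G_B^\calB|)_p\geqs p^{2k}/|K\cap G_{B,B'}|\geqs p^k$, so Corollary~\ref{c:trans_linear_prime-power} applies.

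The point requiring most care is the identification of the two $G_\beta$-actions: an automorphism of $N$ may act nontrivially on $N$ yet trivially on both $pN$ and $N/pN$ (these form the normal subgroup $\bbZ_p^{k^2}$ of $\Aut(N)$), so one cannot read off $G_\beta^B\cong G_\beta^\calB$ from faithfulness alone---it is the equivariance of $\theta$ that matches the two kernels and secures the permutation isomorphism. Everything after that is routine order bookkeeping with $p$-parts.
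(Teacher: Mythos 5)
Your proof is correct, and for the central isomorphism it takes a genuinely different route from the paper's. The paper identifies $\Omega=N$, $B=K$ and first proves $G_{(B)}=1$: if $g\in G_{(B)}$ then $g$ fixes $x^p$ for all $x\in N$, so $(x^g)^p=x^p$ and $x^g\in Kx$, giving $G_{(B)}\leqs G_{(\calB)}=K$ and hence $G_{(B)}=K_{(B)}=1$; it then gets $G_B^\calB\cong G_\beta\cong G_\beta^B$ from the semidirect decomposition $G_B=K{:}G_\beta$ via Lemma \ref{l:semi}(iv), and quotes Lemma \ref{l:semi}(iii) for the divisibility $p^k\mid|G_B^\calB|$. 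You never prove faithfulness at all: your multiplication-by-$p$ map $\theta\colon N/pN\to pN$ is an $\Aut(N)$-equivariant isomorphism intertwining the $G_\beta$-actions on $\calB$ and on $B$, which yields the stronger conclusion that $G_B^\calB$ and $G_\beta^B$ are \emph{permutation} isomorphic (not merely abstractly isomorphic), and this immediately delivers the ``permutation isomorphic to a transitive linear group'' phrasing of the lemma. The two arguments share the same underlying mechanism --- the paper's $p$-th power computation is exactly one direction of the kernel identification your $\theta$ provides (triviality on $pN$ forces triviality on $N/pN$) --- but your packaging is cleaner in two respects: it bypasses Lemma \ref{l:semi} entirely (and with it the ``not innately transitive'' technicality, which the paper must import from the preamble of the affine subsection), since you rederive $p^k\mid|G_B^\calB|$ directly from Lemma \ref{l:DGLPP:l:4.2} by the same coset counting that underlies Lemma \ref{l:semi}(iii); and your final-paragraph remark about the congruence kernel $\bbZ_p^{k^2}\leqs\GL_k(\bbZ/p^2\bbZ)$ correctly pinpoints why equivariance, not faithfulness, is the decisive point. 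What the paper's route buys instead is the extra fact $G_{(B)}=1$. One small point to tighten: in locating $K$ you assert $|K|=p^k$ before the identification, which is slightly circular as written; it is cleaner to recall that $K$ is elementary abelian (Lemma \ref{l:semi}(v), as noted at the start of the paper's affine subsection), so $K\leqs\Omega_1(N)=pN$, while $N/K\cong M$ elementary abelian gives $K\geqs\Phi(N)=pN$, whence $K=pN$ and $|K|=p^k$ with no prior order count needed.
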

	
	\begin{proof}
		First, by Lemma \ref{l:semi}(iii), we have $p^k$ divides $|G_B^\calB|$, and so $G_B^\calB$ is one of the transitive linear groups listed in the statement of Corollary \ref{c:trans_linear_prime-power}. It suffices to show that $G_B^\calB\cong G_\beta^B$. Since $N$ is a regular normal subgroup of $G$ and $K\leqs N$, we may identify $\Omega = N$,
		\begin{equation*}
		\calB = [N:K] = \{Kx\mid x\in N\}
		\end{equation*}
		and $B = K$. Hence, $G_{(B)}$ fixes all elements of $N$ of order $p$. Note that for any $x\in N$ and $g\in G_{(B)}$, if $(x^g)^p = x^p$, then $x^g = yx$ for some $y\in K$. This implies that $G_{(B)}$ induces a trivial action on $\calB$, whence $G_{(B)}\leqs G_{(\calB)} = K$. As $K$ is semiregular on $N$, we have $G_{(B)} = K_{(B)} = 1$. Now apply Lemma \ref{l:semi}(iv).
	\end{proof}

	\begin{lem}
		\label{l:pre-image_p=2,3}
		Assume Hypothesis \ref{hypo:imprim}, where $G$ is semiprimitive and $G^\calB$ is affine. If $N\cong \bbZ_{p^2}^k$ and $G$ has rank $3$ on $\Omega$ with a composition factor isomorphic to $\PSL_k(p)$ or $\PSp_k(p)$, then $p = 2$ or $3$.
	\end{lem}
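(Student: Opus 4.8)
The plan is to use the semiprimitivity hypothesis to force $G_\beta$ to be a \emph{complement} of $O_p(\Aut(N))$, and then to derive a contradiction for $p\geqs 5$ from an elementary order computation inside $\GL_k(\bbZ/p^2\bbZ)$.

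First I would identify $\Omega=N$ with $\beta=0$, so that $G=N{:}G_\beta$ with $G_\beta\leqs\Aut(N)\cong\GL_k(\bbZ/p^2\bbZ)$, and write $O:=O_p(\Aut(N))\cong\bbZ_p^{k^2}$ for the kernel of the reduction map $\GL_k(\bbZ/p^2\bbZ)\to\GL_k(p)$. By Lemma \ref{l:semi}(v) we have $K\cong\bbZ_p^k$ with $|K|=|B|=p^k$, so $K=N[p]=pN$ and the blocks of $\calB$ are the cosets of $K$. A direct check shows that an element $(n,g)$ (with $n\in N$, $g\in G_\beta$) fixes every block precisely when $n\in pN$ and $g$ acts trivially modulo $p$, that is, $G_{(\calB)}=pN.(G_\beta\cap O)$ and hence $|G_{(\calB)}|=p^k|G_\beta\cap O|$. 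Since $K=G_{(\calB)}$ has order $p^k$, this forces $G_\beta\cap O=1$; equivalently, reduction modulo $p$ restricts to an isomorphism from $G_\beta$ onto $T:=G_B^\calB$, the transitive linear group supplied by Lemma \ref{l:S=T} and Corollary \ref{c:trans_linear_prime-power}. The composition factor hypothesis then gives $\SL_k(p)\normeq T$ or $\Sp_k(p)\normeq T$; in either case $T$ contains a transvection $t=I+E$ with $E$ of rank $1$ and $E^2=0$.

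Next I would lift $t$ through the isomorphism $G_\beta\cong T$. Its unique preimage $\what t\in G_\beta\leqs\GL_k(\bbZ/p^2\bbZ)$ reduces to $I+E$ modulo $p$ and satisfies $|\what t|=|t|=p$. Writing $\what t=I+\wtilde E+pF$ for a fixed lift $\wtilde E$ of $E$ (so $\wtilde E^{2}\equiv 0\pmod p$), everything reduces to evaluating $\what t^{\,p}$ modulo $p^2$.

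The order computation is the crux, and the only genuine obstacle. Expanding $\what t^{\,p}=\sum_{i=0}^{p}\binom{p}{i}(\wtilde E+pF)^{i}$ modulo $p^2$ and using $p\mid\binom{p}{i}$ for $1\leqs i\leqs p-1$ together with $\wtilde E^{2}\equiv 0\pmod p$, every term with $2\leqs i\leqs p-1$ dies, the term $i=1$ contributes $p\wtilde E$, and for $p\geqs 5$ the top term $(\wtilde E+pF)^{p}$ also vanishes modulo $p^2$: each surviving monomial either carries at least two factors of $pF$, or a block $\wtilde E^{a}F\wtilde E^{b}$ with $a+b=p-1\geqs 4$ and hence a factor $\wtilde E^{2}\equiv 0$, while $\wtilde E^{p}\equiv 0$ because $\wtilde E^{4}\equiv 0\pmod{p^2}$. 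Thus $\what t^{\,p}\equiv I+p\wtilde E\pmod{p^2}$ when $p\geqs 5$. Since $E\ne 0$, we have $p\wtilde E\ne 0$ in $\GL_k(\bbZ/p^2\bbZ)$, so $\what t^{\,p}\ne I$, contradicting $|\what t|=p$. Hence $p\leqs 3$. I expect the delicacy to lie entirely here: for $p=2,3$ the exponent $p-1$ is too small for the cross terms and for $\wtilde E^{p}$ to vanish modulo $p^2$, so no contradiction appears, which is exactly why these two primes are not excluded.
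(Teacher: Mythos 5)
Your proof is correct and follows essentially the same route as the paper: you force the lift of the mod-$p$ reduction to be faithful on $G_\beta$ (the paper deduces $R\cap G_\beta=1$ from the uniqueness of the intransitive normal subgroup in Lemma \ref{l:semi}(i), you get $G_\beta\cap O_p(\Aut(N))=1$ by counting $|G_{(\calB)}|=p^k|G_\beta\cap O|$ against $|K|=p^k$), then lift a transvection to an element of order $p$ in $\GL_k(\bbZ/p^2\bbZ)$ and show its $p$-th power equals $I+p\wtilde{E}\neq I$ when $p\geqs 5$. The differences are cosmetic: the paper expands $(I+E+pM)^p$ to first order in $pM$ with the explicit coefficients $\tfrac{p^2(p-1)}{2}$ and $\tfrac{p^2(p-1)(p-2)}{6}$, whereas you run the full binomial expansion with a monomial-by-monomial analysis using $\wtilde{E}^2\equiv 0\pmod p$ --- both computations are valid and isolate exactly the same failure for $p=2,3$.
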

	
	\begin{proof}
		Let $\beta\in\Omega$ and suppose $p\ne 2$ or $3$. First note that $G=N{:}G_\beta$ and so $G_\beta\leqs \Aut(N)\cong\GL_k(\bbZ/p^2\bbZ)$ has a composition factor isomorphic to $\PSL_k(p)$ or $\PSp_k(p)$.
		It follows that $(G_B^\calB)^\infty\cong (G_\beta)^{\infty}\cong\SL_k(p)$ or $\Sp_k(p)$ is a subgroup of $\GL_k(\bbZ/p^2\bbZ)$.
		Hence, with respect to a standard basis, there exists an element $A=I+E+p M$ in $G_\beta$, where $I$ is the identity matrix,			
		\begin{equation*}
		E = \begin{pmatrix}
		0&1&0&\cdots& 0\\
		0&0&0&\cdots& 0\\
		\vdots&\vdots&\vdots&\ddots&\vdots\\
		0&0&0&\cdots& 0
		\end{pmatrix}
		\end{equation*}
		and $M$ is a $(k\times k)$-matrix.
		
		Recall that the kernel $R$ of $\GL_k(\bbZ/p^2\bbZ)$ acting on $\Phi(N)$ consists of matrices of the form $I+pM$ for some $(k\times k)$-matrix $M$.
		Notice that $\Phi(N)$ and $\Phi(N){:}(R\cap G_\beta)$ are intransitive normal subgroups of $G$, and thus $R\cap G_\beta = 1$ by Lemma \ref{l:semi}(i).
		Moreover, the image of $A$ in $\GL_k(\bbZ/p^2\bbZ)/R\cong\GL_k(p)$ is of order $p$, and hence $A$ also has order $p$ in $\GL_k(\bbZ/p^2\bbZ)$ as $R\cap G_\beta = 1$ and $A\in G_\beta$.
		Now
		\[\begin{aligned}
		I=A^p &= (I+E+pM)^p
		\\&=(I+E)^p+p \sum_{\ell=0}^{p-1}(I+E)^\ell M(I+E)^{p-1-\ell}\\
		&=I+pE+p\sum_{\ell =0}^{p-1}(I+\ell E)M(I+(p-1-\ell)E)\\
		&=I+pE+p\sum_{\ell =0}^{p-1}(M+(p-1-\ell)ME+\ell EM+\ell(p-1-\ell)EME)\\
		&=I+pE+\frac{p^2(p-1)}{2}(ME+EM)+\frac{p^2(p-1)(p-2)}{6}EME.
		\end{aligned}\]
		Since $p\neq 2$ or $3$, we have $2\mid p-1$ and $6\mid (p-1)(p-2)$, so
		\begin{equation*}
		I=I+pE\neq I,
		\end{equation*}
		a contradiction.
		Therefore, $p=2$ or $3$.
	\end{proof}

	\begin{lem}
		\label{l:prime-power_ind}
		Let $Z$ be a finite group. Suppose $Z$ has a subgroup of index $p^\ell$ for some prime $p$, and $Z_2\normeq Z_1\normeq Z$. If $p^\ell$ does not divide $|Z:Z_1|\cdot|Z_2|$, then $Z_1/Z_2$ has a subgroup of index $p^t$ for some $t\geqs 1$.
	\end{lem}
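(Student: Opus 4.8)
The plan is to exhibit an explicit subgroup of $Z_1$ that contains $Z_2$ and whose index in $Z_1$ is a power of $p$, and then to use the arithmetic hypothesis to force that power to be strictly positive. Fix a subgroup $H\leqs Z$ with $|Z:H|=p^\ell$, and set $D:=H\cap Z_1$. Since $Z_2\normeq Z_1$ and $D\leqs Z_1$, the product $U:=Z_2D$ is a subgroup of $Z_1$ with $Z_2\leqs U\leqs Z_1$; the image $U/Z_2$ will be the required subgroup of $Z_1/Z_2$.

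First I would record that $|Z_1:D|$ is a power of $p$. This follows from the orbit--stabiliser theorem applied to the action of $Z_1$ by left multiplication on the coset space $Z/H$: the stabiliser of the trivial coset $H$ is exactly $Z_1\cap H=D$, so $|Z_1:D|$ is the length of a $Z_1$-orbit on $Z/H$ and hence divides $|Z:H|=p^\ell$. Writing $|Z_1:D|=p^b$ with $0\leqs b\leqs \ell$, the inclusions $D\leqs U\leqs Z_1$ show that $|Z_1:U|$ divides $p^b$, so $|Z_1:U|=p^t$ for some $t\geqs 0$. Thus the index is automatically a $p$-power, and the only remaining task is to rule out $t=0$, i.e.\ to show $U\ne Z_1$.

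The crux is this positivity, and it is the single place where the hypothesis is used. Suppose for contradiction that $U=Z_2D=Z_1$. Then, by the second isomorphism theorem, $p^b=|Z_1:D|=|Z_2D:D|=|Z_2:Z_2\cap D|$ divides $|Z_2|$. On the other hand, computing $|Z:D|$ along the two chains $D\leqs Z_1\leqs Z$ and $D\leqs H\leqs Z$ gives
\[
|Z:Z_1|\cdot p^b=|Z:D|=p^\ell\cdot|H:D|,
\]
so $p^\ell$ divides $|Z:Z_1|\cdot p^b$. Combining this with $p^b\mid|Z_2|$ yields $p^\ell\mid|Z:Z_1|\cdot|Z_2|$, contradicting the assumption that $p^\ell$ does not divide $|Z:Z_1|\cdot|Z_2|$. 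Hence $U\ne Z_1$, so $t\geqs 1$, and $U/Z_2$ is a subgroup of $Z_1/Z_2$ of index $p^t$ with $t\geqs 1$, as claimed.

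I expect no genuine obstacle, since this is a bookkeeping lemma on subgroup indices. The one point requiring care is the choice of candidate subgroup: taking $Z_2(H\cap Z_1)$ rather than $H\cap Z_1$ itself is what guarantees the subgroup contains $Z_2$ (so that it descends to $Z_1/Z_2$), while the orbit argument keeps its index a $p$-power; verifying that this index is nonzero is precisely where the divisibility hypothesis enters.
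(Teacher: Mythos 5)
Your proof is correct and follows essentially the same route as the paper: both take the same candidate subgroup $(H\cap Z_1)Z_2/Z_2$ (the paper's $(P\cap Z_1)Z_2/Z_2$), note its index in $Z_1/Z_2$ is automatically a $p$-power, and rule out index $1$ by deriving $p^\ell\mid |Z:Z_1|\cdot|Z_2|$, contradicting the hypothesis. Your version merely spells out the index bookkeeping (via $|Z_1:Z_1\cap H|$ dividing $p^\ell$, which uses $Z_1\normeq Z$) that the paper leaves implicit.
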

	
	\begin{proof}
		Let $P$ be a subgroup of $Z$ of index $p^\ell$. Note that $(P\cap Z_1)Z_2/Z_2$ has index $p^t$ in $Z_1/Z_2$ for some $t\geqs 0$. If $t = 0$, then $(P\cap Z_1)Z_2 = Z_1$, and so
		\begin{equation*}
		|Z:Z_1|\cdot |Z_2| = \frac{|Z|}{|P\cap Z_1:P\cap Z_2|}.
		\end{equation*}
		This implies that $p^\ell = |Z:P|$ divides $|Z:Z_1|\cdot |Z_2|$, which is incompatible with our assumption. Therefore, $t\geqs 1$ and the proof is complete.
	\end{proof}
	
	In view of Lemma \ref{l:S=T}, now we consider each case listed in Corollary \ref{c:trans_linear_prime-power} in turn.

	\begin{lem}
		\label{l:T_SL}
		Assume Hypothesis \ref{hypo:imprim}, where $G$ is semiprimitive and $G^\calB$ is affine. If $N\cong\bbZ_{p^2}^k$ and $\SL_a(p^f)\normeq G_B^\calB\leqs\mathrm{\Gamma L}_a(p^f)$ for $a\geqs 3$ and $k = af$, then $G$ is not of rank $3$ on $\Omega$.
	\end{lem}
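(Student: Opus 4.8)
The plan is to assume that $G$ has rank $3$ on $\Omega$ and derive a contradiction from the transitive linear structure of $G_\beta$, by feeding a suitable point stabiliser into the prime-power-index machinery of Lemma~\ref{l:prime-power_ind} and Theorem~\ref{t:prime-power}. First I would apply Lemma~\ref{l:S=T} to identify $\Omega$ with $N$, the block $B$ with $K=\Phi(N)$, and $G_\beta$ with $G_B^\calB$, so that $\SL_a(q)\normeq G_\beta\leqs\GammaL_a(q)$ acts on $N\cong\bbZ_{p^2}^k$ with $q=p^f$ and $k=af$. Since every automorphism of $N$ preserves the order of each element and $G_\beta$ has exactly three orbits on $N$ (as $G$ has rank $3$), these orbits must be $\{0\}$, the set $K\setminus\{0\}$ of $p^k-1$ elements of order $p$, and the set $N\setminus K$ of $p^k(p^k-1)$ elements of order $p^2$. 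In particular $G_\beta$ is transitive on $N\setminus K$.

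Next I would exploit the $G_\beta$-equivariant projection $N\setminus K\to (N/K)\setminus\{0\}$, whose fibres $v+K$ all have size $p^k$. Transitivity on $N\setminus K$ forces transitivity on the base, and hence on each fibre, so for a fixed $\overline{v}\in (N/K)\setminus\{0\}$ the stabiliser $Z:=(G_\beta)_{\overline{v}}$ is transitive on $v+K$. Consequently the point stabiliser $(G_\beta)_v$ has index exactly $p^k$ in $Z$, giving a subgroup of $Z$ of $p$-power index.

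The heart of the argument is then to apply Lemma~\ref{l:prime-power_ind} to $Z$. Because $G_\beta\geqs\SL_a(q)$, we have $Z\geqs (\SL_a(q))_{\overline{v}}=[q^{a-1}]{:}\SL_{a-1}(q)$, so $\PSL_{a-1}(q)$ is a composition factor of $Z$; I would choose $Z_2\normeq Z_1\normeq Z$ with $Z_1/Z_2\cong\PSL_{a-1}(q)$ and with $Z_2$ containing the unipotent radical $[q^{a-1}]$. Then the $p$-part of $|Z:Z_1|\cdot|Z_2|$ is $p^{f(a-1)}(f)_p$, which is not divisible by $p^k=p^{af}$ since $(f)_p\leqs f<p^f$. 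Hence Lemma~\ref{l:prime-power_ind} yields a subgroup of $\PSL_{a-1}(q)$ of index $p^t$ for some $t\geqs 1$. But by Theorem~\ref{t:prime-power} the only generic way for the simple group $\PSL_{a-1}(q)$ to admit a subgroup of $p$-power index is the parabolic $P_1$, of index $(q^{a-1}-1)/(q-1)$, which is coprime to $p$ and exceeds $1$ as $a-1\geqs 2$, and so cannot be a positive power of $p$. This contradiction disposes of every case except the finitely many in which $\PSL_{a-1}(q)$ is one of the exceptions in Theorem~\ref{t:prime-power}(i),(iii),(iv)---for instance $\PSL_{a-1}(q)\cong A_{p^m}$, as with $\PSL_4(2)\cong A_8$, or the degenerate cases $(a,q)=(3,2),(3,3)$---which I would eliminate directly with {\sc Magma}.

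The main obstacle will be the bookkeeping in the application of Lemma~\ref{l:prime-power_ind}: one must select the normal series $Z_2\normeq Z_1\normeq Z$ so that the composition factor $\PSL_{a-1}(q)$ carries enough of the $p$-part to guarantee $p^k\nmid |Z:Z_1|\cdot|Z_2|$ (the clean inequality $(f)_p<p^f$ is what makes this work for all $a\geqs 3$ and all $f$), and one must correctly isolate and then check by computer the short list of small groups $\PSL_{a-1}(q)$ possessing an exceptional subgroup of $p$-power index. I note that this line of argument does not require the preliminary reduction to $p\in\{2,3\}$, so it is self-contained given the earlier lemmas.
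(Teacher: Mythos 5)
Your strategy is in essence the paper's own: the index-$p^k$ subgroup of $Z=(G_\beta)_{\overline{v}}=G_{B,B'}^\calB$ that you extract from fibre transitivity is exactly the subgroup supplied by Lemma \ref{l:semi}(iii), and your normal series through $[q^{a-1}]{:}\SL_{a-1}(q)$ together with the bound $(f)_p\leqs f<p^f$ reproduces the paper's application of Lemma \ref{l:prime-power_ind} and Theorem \ref{t:prime-power}. However, there is a genuine gap at the Guralnick step. You assert that the coprimality of $(q^{a-1}-1)/(q-1)$ to $p$ rules out case (ii) of Theorem \ref{t:prime-power} entirely, so that only the exceptions in parts (i), (iii), (iv) survive. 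This is false: Theorem \ref{t:prime-power} constrains the \emph{abstract} isomorphism type of the simple group, and the exceptional isomorphism $\PSL_2(7)\cong\PSL_3(2)$ feeds case (ii) in the other presentation. Concretely, $\PSL_2(7)$ has a subgroup of index $7=p$ (the stabiliser $S_4$ in the action on the seven points of the Fano plane, i.e.\ a $P_1$-parabolic of $\PSL_3(2)$), and $\PSL_3(2)$ has a subgroup of index $8=2^3$ (a parabolic of $\PSL_2(7)$, of order $21$). Hence the cases $(a,p,f)=(3,7,1)$ and $(4,2,1)$ are \emph{not} contradicted by your coprimality argument, yet your enumeration via (i), (iii), (iv) recovers only $(3,5)$, $(3,11)$ and $(5,2)$; your proof would silently declare $(3,7)$ and $(4,2)$ impossible without checking them. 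Compare the paper's full list of survivors, $(a,p)\in\{(3,5),(3,7),(3,11),(4,2),(5,2)\}$ with $f=1$.

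The gap is repairable, since both missing cases do fail, but for reasons your argument never reaches. The paper eliminates $(3,7)$ (together with $(3,5)$ and $(3,11)$) by Lemma \ref{l:pre-image_p=2,3}: the computation $(I+E+pM)^p=I+pE\neq I$ shows that for $p\geqs 5$ no element of $G_\beta\leqs\GL_k(\bbZ/p^2\bbZ)$ of order $p$ can map to a transvection, i.e.\ the relevant congruence extension does not split; and it eliminates $(4,2)$ by a {\sc Magma} check that $\GL_4(\bbZ/4\bbZ)$ has no subgroup isomorphic to $\SL_4(2)$. So your closing remark that the argument ``does not require the preliminary reduction to $p\in\{2,3\}$'' is only tenable if you restore $(3,7)$ and $(4,2)$ to the residual list and verify them computationally alongside $(3,5)$, $(3,11)$, $(5,2)$ and the degenerate soluble cases $(a,q)=(3,2),(3,3)$ (where, note, the check is not mere subgroup existence: $\bbZ_4^3{:}\SL_3(2)$ does exist, and one must verify failure of semiprimitivity or of rank $3$, as the paper does). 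With that correction your proof closes, by essentially the paper's route with computations substituted for Lemma \ref{l:pre-image_p=2,3}.
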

	
	\begin{proof}
		Suppose $G$ has rank $3$ on $\Omega$ and write $q = p^f$. First note that if $G_{B,B'}^\calB$ is soluble, then $k = a = 3$ and $q = p\in\{2,3\}$. With the aid of {\sc Magma}, one can check that $\GL_3(\bbZ/9\bbZ)$ has no subgroup isomorphic to $\SL_3(3)$. And it is also easy to check that if $q = 2$ and $k = 3$, then $G = \bbZ_4^3.\SL_3(2)$ has a unique subgroup isomorphic to $\SL_3(2)$ up to conjugacy, while $G$ is not semiprimitive on the set of cosets of $\SL_3(2)$.
		
		Thus, we may now consider the case where $G_{B,B'}^\calB$ is insoluble. By Lemma \ref{l:semi}(iii), the group $Z:=G_{B,B'}^\calB$ has a subgroup of index $|K| = p^k$. Note that $Z_1 := Z\cap (G_B^\calB)^{\infty}\normeq Z$, which is isomorphic to $Z_2{:}S$ with $Z_2\cong \bbZ_p^{(a-1)f}$ and $S\cong \GL_{a-1}(q)$. By Lemma \ref{l:prime-power_ind}, $S$ has a subgroup of index $p^t$ for some $t\geqs 1$, and with the same reason, the group $\PSL_{a-1}(q)$ has a subgroup of index $p^t$. Now we apply Theorem \ref{t:prime-power}, and thus $f = 1$ and
		\begin{equation*}
		(a,p) \in\{(3,5),(3,7),(3,11),(4,2),(5,2)\}.
		\end{equation*}
		Now Lemma \ref{l:pre-image_p=2,3} eliminates the cases where $(a,p) \in \{(3,5),(3,7),(3,11)\}$. Finally, with the aid of {\sc Magma}, one can check that for $a\in\{4,5\}$, the group $\GL_a(\mathbb{Z}/4\mathbb{Z})$ has no subgroup isomorphic to $\SL_a(2)$.
	\end{proof}
	
	\begin{lem}
		\label{l:T_Sp}
		Assume Hypothesis \ref{hypo:imprim}, where $G$ is semiprimitive and $G^\calB$ is affine. If $N\cong\bbZ_{p^2}^k$ and $\Sp_{2a}(p^f)'\normeq G_B^\calB$ for $a\geqs 2$ and $k = 2af$, then $G$ is not of rank $3$ on $\Omega$.
	\end{lem}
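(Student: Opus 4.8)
The plan is to mirror the argument of Lemma~\ref{l:T_SL}, replacing the linear vector stabiliser by the symplectic one. Suppose for contradiction that $G$ has rank $3$ on $\Omega$ and write $q=p^f$, so that $N\cong\bbZ_{p^2}^k$ with $k=2af$. As in the proof of Lemma~\ref{l:S=T} I would identify $\Omega=N$, $B=K=\Phi(N)\cong\bbZ_p^{2af}$ and $\calB=N/K$, so that $G_B^\calB$ is a transitive linear group on $\calB\setminus\{B\}\cong\mathbb{F}_q^{2a}\setminus\{0\}$ with $\Sp_{2a}(q)'\normeq G_B^\calB\leqs\mathrm{\Gamma Sp}_{2a}(q)$. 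Under this identification $Z:=G_{B,B'}^\calB$ is the stabiliser of a non-zero vector, and Lemma~\ref{l:semi}(iii) tells us that $Z$ has a subgroup of index $|K|=p^{2af}$.

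First I would dispose of the cases where the Levi factor of the vector stabiliser is soluble. Since the stabiliser of a non-zero vector in $\Sp_{2a}(q)$ has the shape $[q^{2a-1}]{:}\Sp_{2a-2}(q)$ (with the convention $\Sp_2(q)=\SL_2(q)$), this happens precisely when $a=2$ and $q\in\{2,3\}$; here $N\in\{\bbZ_4^4,\bbZ_9^4\}$ and the finitely many configurations can be checked directly with {\sc Magma}, exactly as in Lemma~\ref{l:T_SL}. So I may assume that $S:=\Sp_{2a-2}(q)$ is insoluble. Setting $Z_1:=Z\cap(G_B^\calB)^{\infty}\cong Z_2{:}S$, with $Z_2\cong\bbZ_p^{(2a-1)f}$ the unipotent radical and $Z_2\normeq Z_1\normeq Z$, I would apply Lemma~\ref{l:prime-power_ind}. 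Its hypothesis holds because $(|Z_2|)_p=q^{2a-1}=p^{(2a-1)f}$, the index $|Z:Z_1|$ divides $|\mathrm{\Gamma Sp}_{2a}(q):\Sp_{2a}(q)'|$ and so $(|Z:Z_1|)_p$ divides $(f)_p$, and $p^f>(f)_p$; thus $p^{2af}$ does not divide $|Z:Z_1|\cdot|Z_2|$. Hence $S$, and then (by a second application of the lemma, since $p\nmid|Z(S)|$) the simple group $\PSp_{2a-2}(q)$, has a subgroup of index $p^t$ for some $t\geqs 1$.

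The crux is then to feed this into Guralnick's theorem (Theorem~\ref{t:prime-power}). For $2a-2\geqs 4$ the only symplectic simple group with a subgroup of $p$-power index is $\PSp_4(3)$, of index $27$, which forces $a=3$ and $q=3$; for $2a-2=2$ the group $\PSL_2(q)$ has a subgroup whose index is a power of the defining characteristic only when $q=p\in\{5,7,11\}$. In every surviving case $f=1$, so $G$ has a composition factor $\PSp_{2a}(p)\cong\PSp_k(p)$, and Lemma~\ref{l:pre-image_p=2,3} then forces $p\in\{2,3\}$. This eliminates $q=p\in\{5,7,11\}$, leaving only the single configuration $(a,q,p)=(3,3,3)$, where $N\cong\bbZ_9^6$ and $\Sp_6(3)\normeq G_B^\calB$.

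I expect this residual case to be the main obstacle. Unlike the $p\geqs 5$ cases it is not excluded by any index or composition-factor argument: the abstract group $\PSp_4(3)$ really does have a subgroup of index $27$, so the contradiction must come from the ambient embedding. Concretely, one must show that the natural module of $\Sp_6(3)$ admits no lift to $\GL_6(\bbZ/9\bbZ)$ supporting a rank $3$ semiprimitive action, which is a lifting (equivalently $H^2$) computation. I would settle it with {\sc Magma}, as with the analogous $\SL_a(2)$ cases that close Lemma~\ref{l:T_SL}.
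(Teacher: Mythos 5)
Your overall route is the paper's route: dispose of the soluble Levi cases $(a,p^f)\in\{(2,2),(2,3)\}$ by computer, apply Lemma \ref{l:prime-power_ind} to the vector stabiliser $Z=G_{B,B'}^\calB$ (whose index-$p^k$ subgroup comes from Lemma \ref{l:semi}(iii)), feed the resulting simple section into Theorem \ref{t:prime-power}, use Lemma \ref{l:pre-image_p=2,3} to kill the characteristics $p\in\{5,7,11\}$, and finish the residue computationally. However, there is a genuine gap at the Guralnick step: you lose the case $(a,p^f)=(3,2)$. Your ``second application of the lemma'' to descend from $S$ to the simple group $\PSp_{2a-2}(q)$ needs $p^t\nmid |S:S'|\cdot|Z(S')|$, and you checked only that $p\nmid |Z(S)|$. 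When $q=2$ and $a=3$ the Levi is $S\cong\Sp_4(2)\cong S_6$, which is not perfect: $|S:S'|=2=p$. Indeed $\Sp_4(2)$ genuinely has a subgroup of index $2$ (namely $A_6=\Sp_4(2)'$), so knowing that $S$ has a subgroup of $2$-power index tells you nothing about the simple group $A_6$, which has no subgroup of $2$-power index at all. Consequently Theorem \ref{t:prime-power} does not eliminate the configuration $\Sp_6(2)\normeq G_B^\calB$ with $N\cong\bbZ_4^6$, and Lemma \ref{l:pre-image_p=2,3} does not touch it either since $p=2$. The correct surviving list is $f=1$ and $(a,p)\in\{(2,5),(2,7),(2,11),(3,2),(3,3)\}$, and the paper rules out $(3,2)$ by a separate {\sc Magma} check that $\GL_6(\bbZ/4\bbZ)$ has no subgroup isomorphic to $\Sp_6(2)$; your argument as written would simply skip this case.

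Two smaller remarks. For the residual case $(a,p)=(3,3)$, the paper does not carry out your proposed lifting/$H^2$ computation for $\Sp_6(3)$ inside $\GL_6(\bbZ/9\bbZ)$; it instead verifies directly that $Z$ has no subgroup of index $3^6$, contradicting Lemma \ref{l:semi}(iii) --- a lighter check that sidesteps the question of whether the natural module lifts at all. Also, you take the Levi of the vector stabiliser to be $\Sp_{2a-2}(q)$, whereas the paper writes $S\cong\bbZ_{q-1}\times\Sp_{2a-2}(q)$ (the Levi of the $1$-space stabiliser); your version is the natural one for a vector stabiliser, and since $q-1$ is prime to $p$ the discrepancy is immaterial to either argument. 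Apart from the missing $(3,2)$ case, your proof matches the paper's.
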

	
	\begin{proof}
		Suppose $G$ has rank $3$ on $\Omega$. For the cases where $(a,p^f) = (2,2)$ or $(2,3)$, one can check using {\sc Magma} that $\GL_4(\bbZ/p^2\bbZ)$ has no subgroup isomorphic to $\Sp_4(p^f)'$, so we may now assume that $(a,p^f)\ne (2,2)$ or $(2,3)$. Once again, we write $q = p^f$ and $Z = G_{B,B'}^\calB$, which has a subgroup of index $p^k$ by Lemma \ref{l:semi}(iii). In this setting, $Z_1:=Z\cap (G_B^\calB)^{\infty}\normeq Z$, which is isomorphic to $Z_2{:}S$ with $|Z_2| = q^{2a-1}$ and $S\cong \bbZ_{q-1}\times \Sp_{2a-2}(q)$. By arguing as in the proof of Lemma \ref{l:T_SL}, the simple group $\PSp_{2a-2}(q)$ has a subgroup of index $p^t$ for some $t\geqs 1$. It follows by Theorem \ref{t:prime-power} that $f = 1$ and
		\begin{equation*}
		(a,p) \in\{(2,5),(2,7),(2,11),(3,2),(3,3)\}.
		\end{equation*}
		Once again, Lemma \ref{l:pre-image_p=2,3} eliminates the cases where $a = 2$ and $p\in\{5,7,11\}$. With the aid of {\sc Magma}, one can also check that $\GL_6(\bbZ/4\bbZ)$ has no subgroup isomorphic to $\Sp_6(2)$, while if $(a,p) = (3,3)$, then $Z$ has no subgroup of index $3^6$, which is incompatible with Lemma \ref{l:semi}(iii).
	\end{proof}
	
	\begin{lem}
		\label{l:T_G2}
		Assume Hypothesis \ref{hypo:imprim}, where $G$ is semiprimitive and $G^\calB$ is affine. If $N\cong\bbZ_{p^2}^k$, $G_2(2^f)'\normeq G_B^\calB$ and $k = 6f$, then $G$ is not of rank $3$ on $\Omega$.
	\end{lem}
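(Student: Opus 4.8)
The plan is to follow the template established in the proofs of Lemmas~\ref{l:T_SL} and~\ref{l:T_Sp}: assume $G$ has rank $3$ on $\Omega$, use Lemma~\ref{l:semi}(iii) to force a subgroup of $2$-power index in the two-point stabiliser $Z := G_{B,B'}^\calB$, peel off the unipotent radical of a vector stabiliser in $G_2(q)$, and derive a contradiction from Guralnick's classification of the simple groups with a subgroup of prime-power index (Theorem~\ref{t:prime-power}). Write $q = 2^f$. By Lemma~\ref{l:S=T} together with Corollary~\ref{c:trans_linear_prime-power}(iii), the group $G_B^\calB$ is permutation isomorphic to a transitive linear group in $\GL_{6f}(2)$ with $(G_B^\calB)^\infty = G_2(q)'$ acting on the $6$-dimensional module in defining characteristic. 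Since $G^\calB$ is $2$-transitive affine on $\calB$, after identifying $\calB$ with $\mathbb{F}_2^{6f}$ so that $G_B^\calB$ stabilises $0$, the second block $B'$ corresponds to a non-zero vector $w$, and $Z$ is the stabiliser of $w$ in $G_B^\calB$. Lemma~\ref{l:semi}(iii) then provides a subgroup of $Z$ of index $|K| = 2^{6f}$.

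The key structural input is that, for $q \geq 4$, the stabiliser of a non-zero vector in $G_2(q)$ acting on its $6$-dimensional module has the shape $[q^5]{:}\SL_2(q)$, where the $2$-group $[q^5]$ is the unipotent radical and $\SL_2(q) \cong \PSL_2(q)$ is a Levi complement (this can be read off from the maximal parabolic structure of $G_2(q)$, noting $|G_2(q)_w| = q^6(q^2-1)$). I would set $Z_1 := Z \cap (G_B^\calB)^\infty = (G_2(q)')_w$ and $Z_2 := O_2(Z_1) = [q^5]$, so that $Z_2 \normeq Z_1 \normeq Z$ with $Z_1/Z_2 \cong \PSL_2(q)$.

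Next I would run the index count needed to apply Lemma~\ref{l:prime-power_ind}. Here $|Z_2| = 2^{5f}$, while $|Z:Z_1|$ divides $|G_B^\calB:G_2(q)'|$; since $\Aut(G_2(q)) = G_2(q){:}\bbZ_f$ (there is no graph automorphism in characteristic $2$) and the scalars centralising $G_2(q)'$ have odd order $q-1$, the $2$-part $(|Z:Z_1|)_2$ divides $(f)_2$. As $(f)_2 \leq f < 2^f$, we obtain $(|Z:Z_1|\cdot|Z_2|)_2 \leq (f)_2\cdot 2^{5f} < 2^{6f}$, so $2^{6f}$ does not divide $|Z:Z_1|\cdot|Z_2|$. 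Lemma~\ref{l:prime-power_ind} then yields a subgroup of $\PSL_2(q)$ of index $2^t$ for some $t \geq 1$. But Theorem~\ref{t:prime-power} shows the only subgroup of $\PSL_2(q)$ of $2$-power index is of type $P_1$, of odd index $q+1 = 2^f+1$; since $t \geq 1$ this is impossible, and so no rank $3$ group arises when $f \geq 2$.

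The remaining obstacle, which I expect to be the genuinely delicate point, is the case $f = 1$, where $G_2(2)' \cong \PSU_3(3)$ and the vector stabiliser degenerates to $[2^5]{:}\bbZ_3$; here the Levi section is merely $\bbZ_3$ and Theorem~\ref{t:prime-power} no longer bites (indeed the divisibility hypothesis of Lemma~\ref{l:prime-power_ind} then fails, so the argument above is inconclusive). I would dispose of this case directly with {\sc Magma}, exactly as in Lemmas~\ref{l:T_SL} and~\ref{l:T_Sp}, by enumerating the subgroups of $\GL_6(\bbZ/4\bbZ)$ isomorphic to $G_2(2)'$ (equivalently $\PSU_3(3)$) and checking that none produces a semiprimitive rank $3$ action on $N \cong \bbZ_4^6$. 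Beyond this small case, the main effort lies in pinning down the $[q^5]{:}\SL_2(q)$ shape of the vector stabiliser precisely enough to isolate the $\PSL_2(q)$ composition section feeding into Guralnick's theorem.
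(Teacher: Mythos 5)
Your proposal follows the paper's proof essentially step for step: Lemma \ref{l:semi}(iii) forces a subgroup of index $2^{6f}$ in $Z = G_{B,B'}^\calB$, the normal series $Z_2\normeq Z_1 = Z\cap(G_B^\calB)^{\infty}\normeq Z$ with $|Z_2| = 2^{5f}$ feeds Lemma \ref{l:prime-power_ind} into Theorem \ref{t:prime-power} to rule out a subgroup of $2$-power index in $\SL_2(2^f)$ for $f\geqs 2$, and the degenerate case $f=1$ is settled computationally, exactly as the paper does (there, $G_B^\calB = G_2(2)'$ is excluded since $2^6\nmid|G_2(2)'|$, and {\sc Magma} shows $\bbZ_4^6.G_2(2)$ has rank $6$). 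If anything you are more careful than the published argument: you correctly identify the vector stabiliser as $[q^5]{:}\SL_2(q)$ of order $q^6(q^2-1)$ (the paper writes $S\cong\GL_2(2^f)$, an apparent slip, before passing to the simple section $\SL_2(2^f)$) and you explicitly verify the divisibility hypothesis $2^{6f}\nmid |Z:Z_1|\cdot|Z_2|$, which the paper leaves implicit; just note that your $f=1$ computation should also cover $G_B^\calB = G_2(2)$, not only subgroups isomorphic to $G_2(2)'$.
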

	
	\begin{proof}
		Suppose $G$ has rank $3$ on $\Omega$. First note that if $f = 1$, then $G_B^\calB = G_2(2)'$ or $G_2(2)$. If $G_B^\calB$ is the former, then $2^6\nmid |G_B^\calB|$, which is incompatible with Lemma \ref{l:semi}(iii). Thus, $G_B^\calB = G_2(2)$, and so $G = \bbZ_4^6.G_2(2)$, which has rank $6$ on $\Omega$, as can be computed using {\sc Magma}. Now assume $f>1$ and write $Z = G_{B,B'}^\calB$, which has a subgroup of index $2^{6f}$. We use the same technique as above and let $Z_1 = Z\cap (G_B^\calB)^{\infty}\normeq Z$, noting that $Z_1\cong Z_2{:}S$, where $|Z_2| = 2^{5f}$ and $S\cong \GL_2(2^f)$. It follows that the simple group $\SL_2(2^f)$ has a subgroup of index $2^t$ for some $t\geqs 1$. However, this is incompatible with Theorem \ref{t:prime-power}.
	\end{proof}

	\begin{prop}
		\label{p:semi_affine}
		Assume Hypothesis \ref{hypo:imprim}, where $G$ is semiprimitive and $G^\calB$ is affine. If $G$ has rank $3$ on $\Omega$, then $G$ has a regular normal subgroup $N$, where $N$ is a special $p$-group for some prime $p$.
	\end{prop}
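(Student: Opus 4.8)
The plan is to assemble the preceding lemmas, the genuine content of the proof already residing in them. First I would invoke Lemma~\ref{l:p=r_affine}: since $G$ is semiprimitive with $G^\calB$ affine and has rank $3$, the preimage $N = K.M$ of the socle $M = \soc(G^\calB)$ is a regular normal subgroup which is a $p$-group. Since $K\cong\bbZ_p^k$ and $N/K\cong M$ is then forced to be a $p$-group, the socle prime $r$ coincides with $p$; and comparing orders, if $N$ were homocyclic of the shape $\bbZ_{p^2}^k$ one would have $m=k$. This reduces everything to identifying the isomorphism type of $N$.

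Next, because $N$ is regular and normal in the rank $3$ group $G$, we have $G = N{:}G_\beta$ with a point stabiliser $G_\beta\leqs\Aut(N)$ having exactly three orbits on $N$; in particular $\Aut(N)$ has at most three orbits, so Lemma~\ref{l:regularnormal} applies. As $N$ is a $p$-group it cannot be of type (ii), so it is of type (i), (iii), (iv) or (v). Here I would simply observe that types (i) (elementary abelian), (iv) (special $2$-group of exponent $4$) and (v) (non-abelian special $p$-group of exponent $p$) are all special $p$-groups by definition. Hence the proposition will follow once the homocyclic case (iii), namely $N\cong\bbZ_{p^2}^k$, is ruled out.

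To exclude this case I would argue by contradiction, assuming $N\cong\bbZ_{p^2}^k$. By Lemma~\ref{l:S=T} the induced group $G_B^\calB\cong G_\beta^B$ is then a transitive linear group on $\mathbb{F}_p^k$ whose order is divisible by $p^k$, so Corollary~\ref{c:trans_linear_prime-power} places it in one of three families: $\SL_a(p^f)\normeq G_B^\calB\leqs\GammaL_a(p^f)$ with $a\geqs 3$; $\Sp_{2a}(p^f)'\normeq G_B^\calB$ with $a\geqs 2$; or $G_2(2^f)'\normeq G_B^\calB$. These three possibilities are precisely what Lemmas~\ref{l:T_SL}, \ref{l:T_Sp} and \ref{l:T_G2} eliminate, each yielding a contradiction, so $N$ is not of type (iii) and is therefore a special $p$-group. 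The conceptual obstacle is entirely absorbed into the exclusion of the homocyclic case: the substance lies in Lemmas~\ref{l:T_SL}--\ref{l:T_G2}, which combine the structural description of the point stabiliser, Guralnick's classification of subgroups of prime-power index (via Lemmas~\ref{l:prime-power_ind} and \ref{l:pre-image_p=2,3}), and computational verification for the small exceptional configurations.
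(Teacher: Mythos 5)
Your proposal is correct and matches the paper's proof, which is literally ``Combine Lemmas \ref{l:p=r_affine}, \ref{l:S=T}, \ref{l:T_SL}, \ref{l:T_Sp} and \ref{l:T_G2}'': Lemma \ref{l:p=r_affine} gives the regular normal $p$-group, Lemma \ref{l:regularnormal} reduces the isomorphism type to cases (i), (iii), (iv), (v), of which (i), (iv), (v) are special $p$-groups, and the homocyclic case $N\cong\bbZ_{p^2}^k$ is excluded exactly as you describe via Lemma \ref{l:S=T}, Corollary \ref{c:trans_linear_prime-power} and Lemmas \ref{l:T_SL}--\ref{l:T_G2}. You have simply made explicit the bookkeeping (ruling out type (ii) since $N$ is a $p$-group, and observing $m=k$ in case (iii)) that the paper leaves implicit in the discussion preceding Lemma \ref{l:S=T}.
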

	
	\begin{proof}
		Combine Lemmas \ref{l:p=r_affine}, \ref{l:S=T}, \ref{l:T_SL}, \ref{l:T_Sp} and \ref{l:T_G2}.
	\end{proof}
	
	We conclude that the proof of Theorem \ref{thm:semi} is complete by combining Propositions \ref{p:GB_as_semi} and \ref{p:semi_affine}.

	\section{Concluding remarks}
	
	\label{s:rmks}
	
	In this final section, we record some additional remarks and examples on some cases appearing in the statements of our main theorems. Same as above, let $G$ be a finite imprimitive permutation group on $\Omega$ of rank $3$, and let $\calB$ be the unique non-trivial block system of $G$.
	Set $K:=G_{(\calB)}$, $|\calB|=n$ and let $B,B'\in\calB$. The following is a family of genuine rank $3$ groups lying in Theorem \ref{thm:reduction}(C).
	
	\begin{ex}\label{ex:trans}
		Assume Hypothesis \ref{hypo:imprim}, where $Y = V{:}L$ is affine and $n\geqs 3$. Let $D\cong L$ be the diagonal subgroup of $L^n$, and let
		\begin{equation*}
		W = \la (v_1,\dots,v_n)\in V^n:v_1+\cdots +v_n = 0\ra < V^n.
		\end{equation*}
		Consider the group $G = (W{:}L){:}X < Y\wr X$, so $V^n$ is not a subgroup of $G$. As noted in the proof of \cite[Lemma 3.7]{DGLPP_rank3}, $G$ is of rank $3$ and $K_{(B)}$ is transitive on $B'$.
	\end{ex}
	
	Now we consider of Theorem \ref{thm:reduction}(D). Here we assume Hypothesis \ref{hypo:imprim} and let $L = G\cap \soc(Y)^n$.
	
	\begin{lem}
		\label{l:KB>1_intrans}
		Suppose $G$ has rank $3$ on $\Omega$, $C_G(L) = L$ and $K_{(B)}\ne 1$ is intransitive on $B'$. Then $K\cong \bbZ_p^a{:}R$ is a Frobenius group, where $a>d$ and $R$ is cyclic.
	\end{lem}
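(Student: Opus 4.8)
The plan is to exploit the hypothesis $C_G(L)=L$ to realise $G/L$ as a linear group acting on $L$, and then to read off the structure of $K$ from this action. Set $M:=\soc(Y)\cong\bbZ_p^d$ and $a:=\dim_{\mathbb{F}_p}L$. First I would record the basic geometry of $L$. Since $L=G\cap M^n\normeq G$ is a nontrivial elementary abelian $p$-group and $L^B\normeq Y$, the minimality of $M$ in the affine group $Y$ gives $L^B=M$; as $L$ is abelian and transitive on $B$, its image $L^B$ is regular, so $|L_{(B)}|=p^{a-d}$ and $a\geqs d$. Moreover $C_G(L)=L$ yields an embedding $G/L\hookrightarrow\Aut(L)\cong\GL_a(p)$ under which $\overline{K}:=K/L$ is a normal subgroup acting faithfully on $L$. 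To get the strict inequality $a>d$ I would show $L_{(B)}\neq1$: if $L_{(B)}=1$ then $[G_{(B)},L]\leqs G_{(B)}\cap L=1$, so $G_{(B)}\leqs C_G(L)=L$ and hence $G_{(B)}=L_{(B)}=1$, forcing $K_{(B)}=1$, contrary to hypothesis.

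Next I would pin down the Frobenius kernel. Once $K$ is known to be a Frobenius group with $p$-group kernel $\mathcal{N}$, the standard dichotomy for normal subgroups of a Frobenius group applied to $L\normeq K$ forces $L\leqs\mathcal{N}$ or $\mathcal{N}\leqs L$; since $|\mathcal{N}|$ is a power of $p$ while a complement has order prime to $|\mathcal{N}|$, one checks that $L\cap\mathcal{N}$ cannot be proper, so the kernel is necessarily $L=\bbZ_p^a$ with $a>d$. Everything therefore reduces to proving that $R:=K/L$ acts fixed-point-freely on $L$ and is cyclic. For the fixed-point-free property I would first establish that $K_{(B)}\leqs L$, equivalently $K_{(B)}=L_{(B)}$. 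The reason this is forced is the observation that for any $p'$-element $g\in K_{(B)}$ one has $[g,L]\leqs L_{(B)}$ (because $g$ centralises $L^B=M$), so $g$ acts trivially on $L/L_{(B)}$; by coprime action $C_L(\langle g\rangle)$ then surjects onto $L/L_{(B)}$ and is in particular nonzero. Hence a nontrivial such $g$ would give a nonidentity element of $R$ with nonzero fixed points on $L$, incompatible with the Frobenius structure, and the content is precisely to rule out nontrivial $p'$-elements of $K_{(B)}$ by means of the rank $3$ hypothesis.

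Granting $K_{(B)}=L_{(B)}$, conjugation identifies $R\cong K^B/M$, a normal subgroup of the transitive linear group $Y_0:=Y/M$ acting faithfully on $M$. Because $K$ normalises each $L_{(B_i)}$, these are $R$-submodules of $L$ with $\bigcap_i L_{(B_i)}=1$ and $L/L_{(B_i)}\cong M$; thus a nonidentity $r\in R$ fixing a nonzero $v\in L$ would fix a nonzero vector in some $L/L_{(B_i)}\cong M$, and it suffices to prove that $R$ acts fixed-point-freely on $M$. I would deduce this from rank $3$: by Lemma \ref{l:DGLPP:l:4.2} the group $G_{B,B'}$ is transitive on $B\times B'$, while the $K$-orbits on $B\times B'$ are controlled by the orbits of $K_{(B)}$ on $B'$, which have common $p$-power size $p^c$ with $1\leqs c<d$ (using $K_{(B)}\normeq G_\beta$ intransitive on $B'$); a nonidentity element of $R$ with a nonzero fixed vector on $M$ would produce an extra suborbit, contradicting rank $3$. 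Fixed-point-freeness forces $p\nmid|R|$, so $K=L{:}R$ splits by Schur--Zassenhaus and is a Frobenius group with kernel $L=\bbZ_p^a$.

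It remains to show that $R$ is cyclic. Since $R\normeq Y_0$ is fixed-point-free, it is a Frobenius complement; if $R$ is soluble it lies in the soluble radical of $Y_0$, which is cyclic by Lemma \ref{l:sol_rad} when $Y_0^\infty$ is quasisimple (the finitely many exceptional transitive linear groups of Table \ref{tab:2-trans_affine} being handled directly). The delicate case, and the main obstacle, is an insoluble fixed-point-free $R$: inspecting Table \ref{tab:2-trans_affine}, this can only arise when $d=2$ and $R\cong\SL_2(5)$ acts fixed-point-freely on $M$ (for $p\in\{11,19,29,59\}$), which is a fixed-point-free but non-cyclic complement. Excluding this configuration --- together with the earlier step of ruling out nontrivial $p'$-elements in $K_{(B)}$ --- is exactly where the full strength of the rank $3$ hypothesis, via the transitivity of $G_{B,B'}$ on $B\times B'$ in Lemma \ref{l:DGLPP:l:4.2} and the resulting suborbit count, must be brought to bear, and this is the hardest part of the argument.
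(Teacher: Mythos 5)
Your proposal gets the easy bookkeeping right (the identity $L^B=M$, the strict inequality $a>d$ via $L_{(B)}\ne 1$, the embedding $G/L\into\GL_a(p)$, and the reduction of fixed-point-freeness on $L$ to fixed-point-freeness on $M$ through the submodules $L_{(B_i)}$), but all three load-bearing steps are announced rather than carried out: (1) that $K_{(B)}$ contains no nontrivial $p'$-element --- you write that ``the content is precisely to rule out'' such elements by the rank $3$ hypothesis, with no argument supplied; (2) that a nonidentity element of $R$ with a nonzero fixed vector on $M$ ``would produce an extra suborbit'' --- asserted, not checked, and even the setup is shaky, since the $K$-orbits on $B\times B'$ are governed by the orbits of the point stabiliser $K_\beta$ on $B'$, not by those of $K_{(B)}$; and (3) the exclusion of an insoluble $R\cong\SL_2(5)$, which you explicitly label ``the hardest part of the argument'' and leave open (your soluble case is also incomplete: when $Y_0\leqs\GammaL_1(p^d)$ is itself soluble, ``$R$ lies in the soluble radical'' says nothing). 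As written, this is a plan whose hardest components are missing, not a proof.

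The missing idea --- and the paper's actual route --- is to use the hypothesis that $K_{(B)}\ne 1$ is \emph{intransitive on $B'$} structurally, before any suborbit counting. Since $K^{B'}$ is a nontrivial normal subgroup of the $2$-transitive (hence primitive) group $G_{B'}^{B'}=Y$, it is transitive and contains the socle $M$, so $K^{B'}=M{:}S$ with $S\normeq Y_0$ a normal subgroup of the transitive linear group $Y_0\leqs\GL_d(p)$. The orbits of the nontrivial, intransitive normal subgroup $K_{(B)}^{B'}\normeq K^{B'}$ form a nontrivial block system for $K^{B'}$ on $B'\cong M$, and the block through $0$ is a proper nonzero $S$-invariant subspace: thus $S$ is a \emph{reducible} normal subgroup of a transitive linear group, and Theorem \ref{t:norm_reducible} --- which your proposal never invokes, although it is proved in the paper for exactly this purpose --- forces $S$ to be cyclic, indeed (from its proof) a subgroup of the scalar-field group $\bbZ_{p^d-1}$, which acts fixed-point-freely on $M$. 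This single application dissolves all three of your gaps at once: $K^{B'}\cong M{:}R$ is Frobenius, so its intransitive normal subgroup $K_{(B)}^{B'}$ lies in the kernel $M$, giving $K_{(B)}\leqs L$ with no $p'$-element analysis (gap (1)); the $\SL_2(5)$ configuration never arises because a fixed-point-free $\SL_2(5)$ is irreducible, whereas $S$ has already been shown reducible (gap (3)); and cyclicity plus fixed-point-freeness of $R\cong K/L$ come out directly (gap (2)), after which $K=L{:}R\cong\bbZ_p^a{:}R$ splits by Schur--Zassenhaus and is Frobenius by pushing a nontrivial $p'$-element of a complement into each $K^{B''}$. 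In short, the rank $3$ hypothesis is only needed through Lemma \ref{l:DGLPP:l:4.2} and the standing setup; the ``full strength'' you planned to bring to bear via suborbit counts is replaced by the intransitivity of $K_{(B)}$ on $B'$ together with Theorem \ref{t:norm_reducible}.
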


	\begin{proof}
		First note that $K_{(B)}\normeq K\normeq G_{B'}$, and $G_{B'}^{B'}\leqs\GL_d(p)$ is a transitive linear group. If $K_{(B)}^{B'} = 1$, then $K_{(B)}^{B''} = 1$ for any $B''\in\calB\setminus\{B\}$ by the $2$-transitivity of $G^\calB$, which implies that $K_{(B)}$ fixes every point in $\Omega$ and so $K_{(B)} = 1$, a contradiction. Hence, $K_{(B)}^{B'}$ is a non-trivial intransitive normal subgroup of $K^{B'}$, which yields $K^{B'}\normeq G_{B'}^{B'}$ is an imprimitive normal subgroup. By Theorem \ref{t:norm_reducible}, this implies that $K_B\cong K^{B'}\cong M{:}R$ is a Frobenius group, where $R$ is a cyclic subgroup of $\bbZ_{p^d-1}$.  
		Moreover, $K_{(B)}^{B'}$ is an elementary abelian $p$-group of order less than $p^d$. In particular, $K_{(B)}\leqs L$ is an elementary $p$-group. Note that $K^B=K/K_{(B)}\cong M{:}R$ and $M=L^B=L/(L\cap K_{(B)})$, and hence $K/L\cong R$. Since $p\nmid |R|$, we obtain that $K\cong L{:}R\cong\bbZ_p^a{:}R$ is a Frobenius group for some $a>d$.		
	\end{proof}

	Finally, we give some an example of rank $3$ semiprimitive group with a regular normal subgroup $N$. By Theorem \ref{thm:semi}, $N$ is a special $p$-group for some prime $p$, and moreover, $N$ is a group of type (i), (iv) or (v) of Lemma \ref{l:regularnormal}. As discussed in Section \ref{s:intro}, we will work on a classification of such groups in future work.
	The following example gives a genuine rank $3$ group arising in type~(i).

	\begin{ex}
		\label{ex:affine}
		Define the following two subgroups of $\GL_4(2)$:
		\[\begin{aligned}
		H_1&=\left\langle\left(\begin{array}{cccc}1&1&0&0\\ 0&1&0&0\\0&0&1&1\\0&0&0&1\end{array}\right),\left(\begin{array}{cccc}1&0&0&0\\ 0&0&0&1\\0&1&0&0\\ 0&0&1&0\end{array}\right)\right\rangle\cong\GL_3(2), \\
		H_2&=\left\langle\left(\begin{array}{cccc}1&1&0&0\\0&1&0&0\\0&0&1&1\\ 0&0&0&1\end{array}\right),\left(\begin{array}{cccc}0&0&1&0\\1&0&0&0\\0&1&0&0\\ 0&0&0&1\end{array}\right)\right\rangle\cong\GL_3(2).
		\end{aligned}\]	
		Note that $H_1$ fixes a hyperplane and $H_2$ fixes a $1$-dimensional subspace of $\mathbb{F}_2^4$.
		Write $G_1:=\mathbb{F}_2^4{:}H_1$ and $G_2:=\mathbb{F}_2^4{:}H_2$. Then both $G_1$ and $G_2$ are imprimitive groups acting on the vector space $V=\mathbb{F}_2^4$. As can be checked using {\sc Magma}, both $G_1$ and $G_2$ are of rank $3$, but only $G_2$ is semiprimitive.
	\end{ex}



\end{document}